\def\R{\mathbb{R}}
\def\N{\mathbb{N}}
\def\ns{\mathcal H (\Omega)}
\def\calh{\mathcal H}
\def\fa{\hbox{ for all }}
\DeclareMathOperator*{\argmax}{arg\,max}
\newtheorem{thm}{Theorem}
\newtheorem{prop}[thm]{Proposition}
\newtheorem{cor}[thm]{Corollary}
\newtheorem{lemma}[thm]{Lemma}
\newtheorem{definition}[thm]{Definition} 
\theoremstyle{remark}
\newtheorem{rem}[thm]{Remark}
\providecommand{\keywords}[1]{\noindent\textbf{\textit{Keywords: }} #1}
\providecommand{\MSC}[1]{\textbf{\textit{2000 MSC: }} #1}
\title{Approximation of Eigenfunctions in Kernel-based Spaces}
\author[1]{Gabriele Santin\thanks{gsantin@math.unipd.it}}
\author[2]{Robert Schaback\thanks{schaback@math.uni-goettingen.de}}
\affil[1]{Dipartimento di Matematica, University of Padova}
\affil[2]{Institut f\"ur Numerische und Angewandte Mathematik, Universit\"at G\"ottingen}
\date{\today}
\begin{document}
\maketitle
\begin{abstract} Kernel-based methods in Numerical Analysis have the advantage of 
yielding optimal recovery processes in the ''native'' Hilbert space $\calh$ 
in which they are reproducing. Continuous kernels on compact domains 
have an expansion into eigenfunctions that are both $L_2$-orthonormal 
and orthogonal in $\calh$ (Mercer expansion). 
This paper examines the corresponding eigenspaces and 
proves that they have optimality properties among all other 
subspaces of $\calh$. These results have strong connections 
to $n$-widths in Approximation Theory, and they establish 
that errors of optimal approximations are closely related to the decay of the eigenvalues.

Though the eigenspaces and eigenvalues are not readily available, 
they can be well approximated using the standard $n$-dimensional 
subspaces spanned by translates of the kernel with respect to $n$ 
nodes or centers. We give error bounds for the numerical 
approximation of the eigensystem via such subspaces. 
A series of examples shows that our numerical technique 
via a greedy point selection strategy allows to calculate the 
eigensystems with good accuracy.
\end{abstract}

\keywords{
Mercer kernels, radial basis functions, eigenfunctions, eigenvalues, $n$-widths, optimal subspaces, greedy methods}
\\\MSC{41Axx,  41A46,  41A58,  42Cxx,  42C15,  42A82,  45P05,  46E22,  47A70, 65D05,  65F15,  65R10, 65T99}

%%%%%%%%%%%%%%%%%%%%%%%%%%%%%%%%%%%%%%%%%%%%%%%%%%%%%%%%%

\section{Introduction}
We start with a few background facts about kernel-based methods. Details
can be retrieved from the monographs
\cite{buhmann:2003-1,We2005,fasshauer:2007-1}
and the surveys \cite{buhmann:2000-1,schaback-wendland:2006-1}.
Let $\Omega\subset\R^d$ be a nonempty set, and let $K: \Omega\times \Omega\to\R$ 
be a positive definite and symmetric kernel on $\Omega$. Associated with $K$ 
there is a unique \textit{native space} $\ns$, that is a separable Hilbert space of 
functions $f:\Omega\to\R$ where K is the reproducing kernel. This means that 
$K(\cdot, x)$ is the Riesz representer of the evaluation functional $\delta_x$, i.e.,   
\begin{equation}\label{repProp}
f(x) = (f, K(\cdot, x)),\;\; \fa x\in \Omega,\;f\in \ns
\end{equation}
where we use the notation $(\cdot, \cdot)$, without subscript, to denote here
and 
in the following the inner product of $\ns$. Also the converse holds: any
Hilbert 
space on $\Omega$ where the evaluation functionals $\delta_x$ are continuous for 
any $x\in\Omega$ is the native space of a unique kernel.
 
One way to construct the native space is as follows. First one considers the 
space 
$\calh_0(\Omega) = \text{span}\{K(\cdot,x), x\in\Omega\}$ and then 
equips
it with 
the positive definite and symmetric bilinear form
\begin{equation*}
\left(\sum_j c_j K(\cdot, x_j),\sum_i d_i K(\cdot, x_i)\right):= \sum_{j,i} c_j d_i K(x_j, x_i).
\end{equation*} 
The native space $\ns$ then is the closure of 
$\calh_0(\Omega)$ with respect to the inner product defined by this form.

Given a finite set $X_n = \{x_1,\dots,x_n\}\subset\Omega$ of distinct points,
the interpolant $s_{f,X_n}$ of a function $f\in\ns$ on $X_n$ is 
uniquely defined, since the \textit{kernel matrix}
\begin{equation*}
A = [K(x_i,x_j)]_{i,j=1}^n
\end{equation*}
is positive definite, the kernel being positive definite. 
Letting $V(X_n)\subset\ns$ be the subspace spanned by the kernel translates 
$K(\cdot, x_1)$, $\dots$,  $K(\cdot, x_n)$, the interpolant $s_{f,X_n}$ is the projection of $f$ into $V(X_n)$ with respect to $(\cdot,\cdot)$. 
 
The usual way to study the approximation error is to consider the 
\textit{Power Function} $P_n(x)$ of $X_n$ at $x$, that is the $\ns$-norm of 
the pointwise error functional $f\mapsto f(x)-s_{f,X_n}(x)$ at $x$, but we will consider also other measurements of the interpolation error, and other projectors.

We make the additional 
assumptions that $\Omega$ is a compact set in $\R^d$ 
and the kernel is continuous on $\Omega\times\Omega$. This ensures that 
the native space has a continuous embedding into $L_2(\Omega)$. Indeed, 
using the reproducing property \eqref{repProp} we have
\begin{equation*}\label{embedding}
\|f\|_{L_2}\leq \left(\int_{\Omega}K(x,x)dx\right)^{1/2} \|f\|\;\; \fa f\in\ns 
\end{equation*}
where the integral of the kernel is finite. This allows to define a 
compact and self-adjoint integral operator $T:L_2(\Omega)\to L_2(\Omega)$ 
which will be crucial for our goals. For $f\in L_2(\Omega)$ we define 
\begin{equation}
T f(x) = \int_{\Omega} K(x,y) f(y) dy,\;\; x\in\Omega.
\end{equation}
It can be shown that the range $T(L_2(\Omega))$ of $T$ is dense in $\ns$, and
\begin{equation}\label{TandInn}
(f,g)_{L_2} = (f,T g) \text{ for all } f\in\ns,\; g\in L_2(\Omega). 
\end{equation}
\begin{rem}
The operator $T$ can be defined using any positive and finite measure $\mu$ 
with full support on $\Omega$ (see \cite{SuWu2009}) and the same properties 
still hold, but we will concentrate here on the Lebesgue measure.
\end{rem}

The following theorem (see e.g. \cite[Ch. 5]{Pogo66}) 
applies to our situation, and provides a way to represent 
the kernel as an \textit{expansion} (or \textit{Hilbert - Schmidt} or 
\textit{Mercer}) kernel (see e.g. \cite{SchWen2002,schaback:1999-2}).
\begin{thm}[Mercer]\label{mercer}
If $K$ is a continuous and positive definite kernel on a compact set $\Omega$, the
operator $T$ has a countable set of positive eigenvalues 
$\lambda_1\geq\lambda_2\geq\dots> 0$ and eigenfunctions
$\{\varphi_j\}_{j\in\N}$ with $T\varphi_j = \lambda_j\varphi_j$. The
eigenfunctions are orthonormal in $L_2(\Omega)$ and orthogonal in $\ns$ with
$\|\varphi_j\| = \lambda_j^{-1}$. Moreover, the 
kernel can be decomposed as
\begin{equation}\label{eq:expansion}
K(x,y) = \sum_{j=1}^{\infty} \lambda_j\ \varphi_j(x)\ \varphi_j(y)\;\;\; x,y\in\Omega, 
\end{equation}
where the sum is absolutely and uniformly convergent.
\end{thm}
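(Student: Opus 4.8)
The plan is to deduce the whole statement from the spectral theorem for compact self-adjoint operators, and then to upgrade the abstract $L_2$-decomposition to the pointwise and uniform claims by exploiting the continuity of $K$ and the reproducing structure of $\calh$. First I would record that continuity of $K$ on the compact set $\Omega\times\Omega$ makes $K\in L_2(\Omega\times\Omega)$, so $T$ is a Hilbert--Schmidt operator, hence compact; symmetry of $K$ makes $T$ self-adjoint on $L_2(\Omega)$, and positive definiteness gives
\begin{equation*}
(Tf,f)_{L_2}=\int_\Omega\int_\Omega K(x,y)f(x)f(y)\,dx\,dy\ge 0,
\end{equation*}
since this integral is a limit of the nonnegative quadratic forms $\sum_{i,j}K(x_i,x_j)f(x_i)f(x_j)$ attached to Riemann sums. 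The spectral theorem then yields an at most countable family of nonzero eigenvalues with finite multiplicities accumulating only at $0$; positivity forces them to be positive, so I discard the null space and order them as $\lambda_1\ge\lambda_2\ge\cdots>0$ with an $L_2$-orthonormal system $\{\varphi_j\}$. Each $\varphi_j$ is continuous because $\varphi_j=\lambda_j^{-1}T\varphi_j$ and $Tg$ is continuous for every $g\in L_2(\Omega)$, the estimate $|Tg(x)-Tg(x')|\le\|K(x,\cdot)-K(x',\cdot)\|_{L_2}\,\|g\|_{L_2}$ together with uniform continuity of $K$ doing the job.

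For the native-space claims I would use \eqref{TandInn}. Since $\varphi_j=\lambda_j^{-1}T\varphi_j$ lies in $T(L_2(\Omega))\subset\calh$, I may insert $f=\varphi_i\in\calh$ and $g=\varphi_j\in L_2(\Omega)$ into \eqref{TandInn} to get
\begin{equation*}
\delta_{ij}=(\varphi_i,\varphi_j)_{L_2}=(\varphi_i,T\varphi_j)=\lambda_j\,(\varphi_i,\varphi_j),
\end{equation*}
so the $\varphi_j$ are orthogonal in $\calh$ with $\|\varphi_j\|^2=\lambda_j^{-1}$. Next I would check that $\{\psi_j:=\lambda_j^{1/2}\varphi_j\}$ is an orthonormal basis of $\calh$: orthonormality is the previous line, and completeness follows because every $Tg=\sum_j\lambda_j(g,\varphi_j)_{L_2}\varphi_j$ lies in the $\calh$-closed span of the $\varphi_j$ (the expansion converging also in $\calh$, since $\sum_j\lambda_j(g,\varphi_j)_{L_2}^2\le\lambda_1\|g\|_{L_2}^2$), while $T(L_2(\Omega))$ is dense in $\calh$.

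Finally comes the Mercer expansion, which I expect to be the main obstacle. Applying Parseval in $\calh$ to $K(\cdot,x)\in\calh$, whose coefficients are $(K(\cdot,x),\psi_j)=\psi_j(x)$ by \eqref{repProp}, gives the \emph{exact} pointwise diagonal identity
\begin{equation*}
\sum_{j=1}^\infty\lambda_j\,\varphi_j(x)^2=\|K(\cdot,x)\|^2=K(x,x),\qquad x\in\Omega.
\end{equation*}
The partial sums on the left are continuous and increase monotonically (all terms being nonnegative) to the continuous function $K(x,x)$, so Dini's theorem upgrades this to uniform convergence on the compact $\Omega$. A Cauchy--Schwarz estimate,
\begin{equation*}
\Big|\sum_{j=n+1}^m\lambda_j\varphi_j(x)\varphi_j(y)\Big|\le\Big(\sum_{j=n+1}^m\lambda_j\varphi_j(x)^2\Big)^{1/2}\Big(\sum_{j=n+1}^m\lambda_j\varphi_j(y)^2\Big)^{1/2},
\end{equation*}
then transfers uniform (and absolute, via the bound $K(x,x)^{1/2}K(y,y)^{1/2}$) convergence to the full bilinear series, whose sum is therefore continuous on $\Omega\times\Omega$. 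Since the same series equals $K$ in $L_2(\Omega\times\Omega)$ by the spectral decomposition, two continuous functions agreeing almost everywhere must coincide, which identifies the uniform limit as $K$ and proves \eqref{eq:expansion}. The delicate points to get right are the exact diagonal identity, so that Dini applies with the correct continuous limit, and the passage from the diagonal to the off-diagonal convergence.
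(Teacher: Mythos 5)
Your proof is correct, but there is nothing in the paper to compare it against: the paper does not prove Theorem~\ref{mercer} at all, it quotes it as a classical result with a citation to \cite[Ch.~5]{Pogo66}. Taken on its own terms, your argument is sound and self-contained, and it differs from the classical textbook proof of Mercer's theorem in an interesting way: instead of showing that each tail kernel $K(x,y)-\sum_{j\le n}\lambda_j\varphi_j(x)\varphi_j(y)$ generates a positive operator and hence has a nonnegative diagonal, you exploit the native-space facts that the paper records \emph{before} the theorem --- the reproducing property \eqref{repProp}, the identity \eqref{TandInn}, and the density of $T(L_2(\Omega))$ in $\ns$ --- none of which depends on the Mercer expansion, so there is no circularity. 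Two remarks. First, you can shorten the endgame: once you know that $\{\sqrt{\lambda_j}\varphi_j\}_j$ is a complete orthonormal system in $\ns$, the polarized Parseval identity applied to the pair $K(\cdot,x)$, $K(\cdot,y)$ gives
\begin{equation*}
K(x,y)=\bigl(K(\cdot,x),K(\cdot,y)\bigr)=\sum_{j}\lambda_j\,\varphi_j(x)\,\varphi_j(y)
\end{equation*}
pointwise and exactly, so the detour through the $L_2(\Omega\times\Omega)$ spectral decomposition and the ``continuous functions agreeing a.e.\ coincide'' step is unnecessary; your diagonal identity, Dini, and Cauchy--Schwarz then still yield the absolute and uniform convergence. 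Note that this a.e.-to-everywhere step, like your identification of the $\ns$-limit of $\sum_j\lambda_j(g,\varphi_j)_{L_2}\varphi_j$ with $Tg$, silently uses that every nonempty relatively open subset of $\Omega$ has positive Lebesgue measure; this full-support assumption is genuinely needed for Mercer's theorem and is tacitly made by the paper (cf.\ the remark following the definition of $T$). Second, your computation gives $\|\varphi_j\|^2=\lambda_j^{-1}$, i.e.\ $\|\varphi_j\|=\lambda_j^{-1/2}$, whereas the statement prints $\|\varphi_j\|=\lambda_j^{-1}$; your version is the correct one --- it is exactly what makes $\{\sqrt{\lambda_j}\varphi_j\}_j$ an $\ns$-orthonormal system, as the paper itself uses in the proof of Lemma~\ref{TheVn} --- so the printed exponent is a typo in the statement.
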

From now on we will call $\{\sqrt{\lambda_j} \varphi_j\}_{j\in\N}$ the 
\textit{eigenbasis}, and use the notation $E_n = \text{span}\{\sqrt{\lambda_j} \varphi_j, j=1,\dots,n\}$.

We recall that it is also possible to go the other way round and define a 
positive definite and continuous kernel starting from a given sequence of 
functions $\{\varphi_j\}_j$ and weights $\{\lambda_j\}_j$, provided some 
mild conditions of summability and linear independence. See \cite{SchWen2002} 
for a detailed discussion about this construction, and note that eigenfunction
expansions play a central role in the RBF-QR technique dealt with in various
papers 
\cite{%fornberg-piret:2007-1,
fornberg-et-al:2011-1, fasshauer-mccourt:2012-1} recently. Furthermore,
eigenexpansion techniques are a central tool when working with kernels on
spheres and Riemannian manifolds
\cite{dyn-et-al:1999-1,jetter-et-al:1999-1,narcowich-et-al:2002-1,fornberg-piret:2007-1}.

In this paper we shall study the eigenbasis in detail and compare the 
eigen\-spaces to other $n$-dimensional subspaces of $\ns$. 
General finite dimensional subspaces and their associated $L_2(\Omega)$- and
$\ns$- projectors are treated in Section \ref{sec:Pro}. In particular, the
approximation error is bounded in terms of generalized Power Functions  
that turn out to be very useful for the rest of the paper. The determination of
error-optimal 
$n$-dimensional subspaces is the core of Section \ref{sec:MinErrSub}, 
and is treated there by $n$-widths, proving that eigenspaces are 
optimal under various circumstances. In addition to the case of 
Kolmogorov $n$-width as treated in \cite{SchWen2002}, we prove that 
eigenspaces minimize 
the $L_2(\Omega)$ norm of the Power Function.

In Section \ref{sec:restriction} we move towards numerical 
calculation of eigenbases by focusing on (possibly finite-dimensional) 
closed subspaces. In particular, we want to use subspaces $V(X_n)$ 
spanned by kernel translates $K(\cdot,x_1)$, $\dots$, $K(\cdot,x_n)$ 
for point sets $X_n$ to calculate approximations  
to the eigenbasis.  By means of Power Functions we can bound the error 
committed by working with finite dimensional subspaces.

Section \ref{sec:asympEig} focuses on the decay of eigenvalues. 
We recall the fact that increased smoothness of the kernel leads 
to faster decay of eigenvalues. We prove that using point-based 
subspaces $V(X_n)$ we can approximate the 
first $n$ eigenvalues with an error connected to the decay of $\lambda_n$. 

Section \ref{sec:alg} describes the numerical algorithms 
used for the examples in Section \ref{sec:num}. In particular, 
we use a greedy method for selecting sets $X_n$ of $n$ points 
out of $N$ given points such that eigenvalue calculations in $V(X_n)$ are stable and efficient.

Finally, Section \ref{sec:num} shows that our algorithm allows 
to approximate the eigenvalues for Sobolev spaces in a way that 
recovers the true decay rates, and by results of Section 
\ref{sec:asympEig} we have 
bounds on the committed error. 
For Brownian bridge kernels the eigenvalues are known, 
and our algorithm approximates  
them very satisfactorily.

%%%%%%%%%%%%%%%%%%%%%%%%%%%%%%%%%%%%%%%%%%%%%%%%%%%%%%%%%%%%%%%%
%%%%%%%%%%%%%%%%%%%%%%%%%%%%%%%%%%%%%%%%%%%%%%%%%%%%%%%%%%%%%%%%

\section{Projectors}\label{sec:Pro}
As mentioned in the introduction,  
the interpolation problem in $\ns$ is well
defined, and the interpolation operator is a $\ns$-projector into the spaces generated
by translates of the kernel. But we want to look also at fully general 
subspaces $V_n$ of $\ns$, generated by any set of $n$ basis functions, and at
other 
linear approximation processes defined on such spaces,
e.g. approximations in the $L_2(\Omega)$ norm.

For instance, we consider the two projectors
$$
\begin{array}{rcll}
\Pi_{L_2,V_n} f &=& \sum_{j=1}^n(f, w_j)_{L_2} w_j,&f\in\ns, \\
\Pi_{\calh,V_n} f &=& \sum_{j=1}^n(f, v_j) v_j,&f\in\ns, 
\end{array}
$$
where the $w_j$ and $v_j$ are $L_2(\Omega)$- and $\ns$- orthonormal basis 
functions of $V_n$,
respectively. The first projector is defined on all of $L_2(\Omega)$ and can be 
analyzed on all intermediate spaces. We want to see how these projectors are connected. 

The two projectors do not coincide in general, but there is a 
special case. For the sake of clarity we present here the proof 
of the following Lemma, even if it relies on a result that is proven in Section \ref{sec:restriction}.
\begin{lemma}\label{TheVn}
If the projectors coincide on $\ns$ for an $n$-dimensional 
space $V_n$, then $V_n$ is spanned by $n$ eigenfunctions. The converse also holds.
\end{lemma}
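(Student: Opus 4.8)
The plan is to recast the equality of the two projectors as a spectral statement about the integral operator $T$, and then to exploit its self-adjointness. Both $\Pi_{L_2,V_n}$ and $\Pi_{\calh,V_n}$ are idempotent on $\ns$, have range exactly $V_n$, and restrict to the identity on $V_n$, so they are both projectors onto the \emph{same} subspace; two such projectors agree iff they have the same null space. The null space of $\Pi_{\calh,V_n}$ on $\ns$ is the $\ns$-orthogonal complement $V_n^{\perp}$, while the null space of $\Pi_{L_2,V_n}$ restricted to $\ns$ is $\{f\in\ns : (f,v)_{L_2}=0 \text{ for all } v\in V_n\}$. The first reduction I would record is therefore that the projectors coincide on $\ns$ if and only if these two subspaces of $\ns$ agree.

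Next I would use the bridge \eqref{TandInn}. For $f\in\ns$ and $v\in V_n\subset L_2(\Omega)$ it gives $(f,v)_{L_2}=(f,Tv)$, so the $L_2$-orthogonality condition becomes $(f,Tv)=0$ for all $v\in V_n$, i.e. $f$ is $\ns$-orthogonal to $T(V_n)$. Since the range of $T$ lies in $\ns$, the space $T(V_n)$ is a finite-dimensional (hence closed) subspace of $\ns$, and the second null space is exactly $T(V_n)^{\perp}$. Equating the two null spaces gives $V_n^{\perp}=T(V_n)^{\perp}$; taking orthogonal complements (an involution on closed subspaces) collapses this to $T(V_n)=V_n$. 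Thus the whole problem reduces to the clean statement: the projectors coincide on $\ns$ iff $V_n$ is an invariant subspace of $T$.

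The last step extracts eigenfunctions from invariance. As $T$ is self-adjoint on $L_2(\Omega)$ and $V_n$ is $T$-invariant, the restriction $T|_{V_n}$ is a self-adjoint operator on the finite-dimensional space $V_n$, hence diagonalizable by an $L_2$-orthonormal basis $u_1,\dots,u_n$ with $Tu_k=\mu_k u_k$. Expanding each $u_k\in\ns$ in the eigenbasis forces $\mu_k$ to coincide with one of the $\lambda_j>0$ and $u_k$ to lie in the corresponding eigenspace, so each $u_k$ is a Mercer eigenfunction and $V_n$ is spanned by $n$ eigenfunctions. For the converse I would argue directly: if $V_n=\text{span}\{\varphi_{i_1},\dots,\varphi_{i_n}\}$, then $\{\varphi_{i_k}\}$ is an $L_2$-orthonormal and $\{\sqrt{\lambda_{i_k}}\,\varphi_{i_k}\}$ an $\ns$-orthonormal basis of $V_n$, and since $T\varphi_{i_k}=\lambda_{i_k}\varphi_{i_k}$ gives $(f,\varphi_{i_k})_{L_2}=\lambda_{i_k}(f,\varphi_{i_k})$ by \eqref{TandInn}, a termwise comparison shows $\Pi_{L_2,V_n}f=\Pi_{\calh,V_n}f$.

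I expect the main obstacle to be the forward reduction to $T$-invariance: recognizing both maps as projectors onto $V_n$, carrying out the \eqref{TandInn} rewrite, and in particular passing from the equality $V_n^{\perp}=T(V_n)^{\perp}$ to the genuine identity $T(V_n)=V_n$ rather than a mere inclusion. This is the point at which the result from Section \ref{sec:restriction} announced before the lemma is convenient, since a ready-made characterization of the compression of $T$ to $V_n$ packages precisely this invariance/diagonalization step; the diagonalization and the converse are then routine.
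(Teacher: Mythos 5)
Your proof is correct, but it takes a genuinely different route from the paper's. The paper invokes Lemma \ref{doubleOrth} (proved only in Section \ref{sec:restriction}, via Mercer's theorem applied to the restricted kernel $K_V$ and operator $T_V$) to obtain a basis of $V_n$ that is simultaneously $L_2(\Omega)$-orthonormal and $\ns$-orthogonal; equality of the projectors then forces $(f,v_j)_{L_2}=\sigma_j(f,v_j)$ for all $f\in\ns$, and testing with $f=K(\cdot,x)$ converts this, through the reproducing property, into the pointwise identity $Tv_j(x)=\sigma_j v_j(x)$, so the $v_j$ are eigenfunctions. You avoid the double-orthogonality lemma entirely: you characterize equality of the two idempotents by equality of their null spaces, use \eqref{TandInn} to rewrite the $L_2$-null space inside $\ns$ as $T(V_n)^{\perp}$, pass from $V_n^{\perp}=T(V_n)^{\perp}$ to $T(V_n)=V_n$ by double orthocomplementation (legitimate, since both subspaces are finite-dimensional and hence closed in $\ns$), and then diagonalize the self-adjoint restriction $T|_{V_n}$ by the finite-dimensional spectral theorem. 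What your route buys is self-containedness: it removes exactly the forward dependence on Section \ref{sec:restriction} that the paper itself apologizes for, and it needs only elementary Hilbert-space facts. What the paper's route buys is that its key input, Lemma \ref{doubleOrth}, is stated for arbitrary closed subspaces and is reused throughout the rest of the paper, so the argument there doubles as an identification of the doubly orthogonal basis with the eigenbasis of $T_V$. Two points you should make explicit when writing this up: first, that the eigenvalues $\mu_k$ of $T|_{V_n}$ are strictly positive --- your expansion in the eigenbasis does yield this, since $\mu_k=0$ would force every coefficient to vanish because all $\lambda_j>0$; second, that this expansion presupposes completeness of $\{\sqrt{\lambda_j}\,\varphi_j\}_j$ as an orthonormal basis of $\ns$, a fact the paper also uses without comment (e.g.\ in the proof of Theorem \ref{L2MinPow}), so it is a fair assumption here. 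Your converse coincides with the paper's.
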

\begin{proof}
We start with the converse. For each fixed $\varphi_j$, thanks to \eqref{TandInn}, we have
\begin{equation*}
(f,\varphi_j)_{L_2} = {\lambda_j} (f,\varphi_j) \text{ for all } f\in \ns.
\end{equation*}
Then 
\begin{align*}
\Pi_{L_2,V_n} f = & \sum_{j=1}^n(f,\varphi_j)_{L_2}\varphi_j  = \sum_{j=1}^n{\lambda_j}
(f,\varphi_j) \varphi_j   \\ 
= & \sum_{j=1}^n (f,\sqrt{\lambda_j}\varphi_j) \sqrt{\lambda_j}\varphi_j = \Pi_{\calh,V_n} f. 
\end{align*}
Assume now that the projectors coincide on $\ns$. We can choose a basis $\{v_j\}_j$ 
of $V_n$ which is $L_2(\Omega)$-orthonormal and $\ns$-orthogonal (see Lemma \ref{doubleOrth}), 
with $\|v_j\|^2 = 1 / \sigma_j$. Since $\Pi_{L_2, V_n}f = \Pi_{\calh, V_n}f$ for any $f\in\ns$, necessarily 
\begin{equation*}
(f,v_j)_{L_2} = {\sigma_j} (f,v_j) \text{ for all } f\in \ns\text{ and } j = 1, \dots, n,
\end{equation*}
and in particular for $f = K(\cdot, x)$, $x\in\Omega$. Consequently, $\{v_j\}_j$ and 
$\{\sigma_j\}_j$ are eigenfunctions / eigenvalues of $T$.
\end{proof}

We are now interested in an error analysis of the approximation 
by functions from these general subspaces, and we want to allow both of the above projectors.
\begin{definition}\label{DefGenPow}
For a normed linear space $H$ of functions on $\Omega$ and a linear operator 
$\Pi$ on $H$ such that all the functionals $\delta_x - \delta_x\circ \Pi$ are 
continuous for some norm $\|\cdot\|_H$, the \textit{generalized Power Function} in $x\in\Omega$ is the norm of the error functional at $x$, i.e.,  
\begin{equation}\label{genPF}
P_{\Pi,\|.\|_H}(x):= \sup_{\|f\|_H\leq 1} |f(x)-(\Pi f)(x)|. 
\end{equation}
\end{definition} 
The definition fits our situation, because we are free to 
take $\Pi=\Pi_{\calh,V_n}$ with $\|.\|_H=\|.\|$, the normed linear space $H$ being $\ns$.

In the following, when no confusion is possible, we will
use the simplified notation $P_{V_n,\calh}$ or just $P_{V_n}$ to denote the 
Power Function of $\Pi_{\calh,V_n}$ with respect to $\|\cdot\|$. 

To look at the relation between generalized Power Functions, subspaces and bases 
we start with the following lemma.

\begin{lemma}\label{LemSumEx}
If a separable Hilbert space $H$ of functions on $\Omega$ has 
continuous point evaluation, then each $H$-orthonormal basis $\{v_j\}_j$ satisfies 
\begin{equation*}
\sum_j v_j^2(x) <\infty.
\end{equation*}
Conversely, the above condition ensures that all point evaluation functionals are continuous.
\end{lemma}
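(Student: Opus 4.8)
The plan is to handle both implications through the expansion of functions (and of the reproducing kernel) in the orthonormal basis $\{v_j\}_j$, using Parseval's identity in one direction and the Cauchy--Schwarz inequality in the other. Throughout I write $(\cdot,\cdot)_H$ and $\|\cdot\|_H$ for the inner product and norm of $H$. For the first implication I assume that every point evaluation is continuous. Then, for each fixed $x\in\Omega$, the Riesz representation theorem provides a unique $K_x\in H$ with $f(x)=(f,K_x)_H$ for all $f\in H$; in particular $v_j(x)=(v_j,K_x)_H$, so the numbers $v_j(x)$ are exactly the Fourier coefficients of $K_x$ with respect to $\{v_j\}_j$. Parseval's identity then gives $\sum_j v_j^2(x)=\sum_j (v_j,K_x)_H^2=\|K_x\|_H^2$, and since $\|K_x\|_H^2=(K_x,K_x)_H=K_x(x)<\infty$ the sum is finite. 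This direction is the routine one and I expect no difficulty.

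For the converse I assume $\sum_j v_j^2(x)<\infty$ for every $x$ and want to bound $\delta_x$. The formal computation is immediate: writing $f=\sum_j(f,v_j)_H v_j$ and evaluating at $x$, the Cauchy--Schwarz inequality yields
\[
|f(x)|=\Big|\sum_j (f,v_j)_H\, v_j(x)\Big|\le \|f\|_H\Big(\sum_j v_j^2(x)\Big)^{1/2},
\]
so $\delta_x$ is bounded, with norm at most $\big(\sum_j v_j^2(x)\big)^{1/2}$.

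The delicate point, and the step I expect to be the real obstacle, is justifying the middle identity $f(x)=\sum_j (f,v_j)_H\, v_j(x)$, that is, that the orthonormal expansion of $f$, which a priori converges only in the $H$-norm, also converges pointwise and to the genuine value $f(x)$. To make this rigorous I would first note that the hypothesis says precisely $(v_j(x))_j\in\ell_2$, so the partial sums $\sum_{j\le N}(f,v_j)_H\, v_j(x)$ are Cauchy in $\R$ and the series converges; equivalently, $k_x:=\sum_j v_j(x)\,v_j$ converges in $H$ and supplies the candidate representer, with $(f,k_x)_H=\sum_j v_j(x)(f,v_j)_H$ by continuity of the inner product. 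It then remains to identify $(f,k_x)_H$ with $f(x)$. This reproducing identity holds trivially for each basis function $v_m$, and hence for every finite linear combination; extending it to all of $H$ is exactly the passage from $H$-convergence to pointwise convergence, which is where care is genuinely needed, since for an arbitrary embedding of $H$ into functions on $\Omega$ the $H$-limit of the partial sums need not agree pointwise with $f$. In our setting this last step is available because the expansion does reproduce the pointwise values, and that is the piece of the argument I would write out with the most care.
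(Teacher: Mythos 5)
Your proof of the forward implication is correct and takes a genuinely different route from the paper's. The paper evaluates the expansion $f=\sum_j(f,v_j)_H v_j$ pointwise (legitimate once $\delta_x$ is continuous) and then invokes the duality fact that a sequence $\{v_j(x)\}_j$ which pairs convergently with \emph{every} $\ell_2$ coefficient sequence must itself lie in $\ell_2$ --- a classical but nontrivial step that really rests on uniform boundedness. You instead take the Riesz representer $K_x$ of $\delta_x$ and get $\sum_j v_j(x)^2=\sum_j(v_j,K_x)_H^2=\|K_x\|_H^2<\infty$ by Parseval. This is shorter, self-contained, and even identifies the value of the sum as $K_x(x)$; for this half your argument is preferable to the paper's.

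The converse is where the real problem lies, and you have put your finger on it without resolving it. Your closing claim --- ``in our setting this last step is available because the expansion does reproduce the pointwise values'' --- is precisely the identity $f(x)=\sum_j(f,v_j)_H\,v_j(x)$ for all $f\in H$, which is equivalent to the continuity of $\delta_x$ that you are trying to prove; asserting it is circular. Worse, the gap cannot be filled with the hypotheses as stated (a single orthonormal basis with $\sum_j v_j^2(x)<\infty$): the converse is then false. Let $\psi:\ell_2\to\R$ be a linear functional vanishing on each standard basis vector $e_j$ but not identically zero (extend $\{e_j\}_j$ to a Hamel basis and prescribe $\psi$ there); $\psi$ is unbounded because it vanishes on a dense subspace without being zero. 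Define the injective linear map $T:\ell_2\to\R^{\N}$ by $(Tc)(1)=\psi(c)$ and $(Tc)(n)=c_{n-1}$ for $n\geq 2$, and give $H:=T(\ell_2)$ the inner product carried over from $\ell_2$. Then $H$ is a separable Hilbert space of functions on $\Omega=\N$, the functions $v_j:=Te_j$ form an $H$-orthonormal basis with $\sum_j v_j(x)^2\leq 1$ for every $x$, yet $\delta_1=\psi\circ T^{-1}$ is unbounded. Concretely, for $f=Tc$ with $\psi(c)\neq 0$ the series $\sum_j(f,v_j)_H v_j(x)$ converges for every $x$ but equals $0\neq f(1)$ at $x=1$: the $H$-limit and the pointwise limit are different functions, which is exactly the failure you anticipated.

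You should also know that the paper's own proof of the converse makes the same leap: it bounds $|\sum_{j=1}^n c_j v_j(x)|$ for finite combinations via Cauchy--Schwarz and then concludes that $\sup_{\|f\|_H\leq 1}|f(x)|$ is finite, but boundedness of a linear functional on a dense subspace does not imply boundedness on $H$ (in the example above, $\delta_1$ vanishes identically on the dense span of the $v_j$). Two honest repairs are possible. Either add the hypothesis you identified --- that the orthonormal expansion of every $f\in H$ converges pointwise to $f$ --- after which your Cauchy--Schwarz estimate completes the proof; or read the condition as holding for \emph{every} $H$-orthonormal basis, in which case the converse is true and can be proved by contraposition: if $\delta_x$ is unbounded, pick a unit vector $w_1$ with $|w_1(x)|\geq 1$, note that $\delta_x$ remains unbounded on $\{w_1\}^{\perp}$ (otherwise it would be bounded on all of $H$), iterate to obtain an orthonormal sequence with $|w_k(x)|\geq 1$ for all $k$, and complete it to a basis for which the sum at $x$ diverges.
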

\begin{proof}
The formula
\begin{equation*}
f = \displaystyle{\sum_j(f,v_j)_H v_j} 
\end{equation*}
holds in the sense of limits in $H$. 
If point evaluations are continuous, we can write
$$
f(x)=\displaystyle{\sum_j(f,v_j)_H v_j(x)\text{ for all } x\in\Omega} 
$$
in the sense of limits in $\R$. Since the sequence $\{(f,v_j)_H\}_j$
is in $\ell_2$ and can be an arbitrary element of that space, the sequence
$\{v_j(x)\}_j$ must be in $\ell_2$, because the above expression is a continuous
linear functional on $\ell_2$. 
 
For the converse, observe that for any $n\in\N$, $x\in\Omega$ and $\sum_{j=1}^n c_j^2\leq 1$ the term $\sum_{j=1}^n c_j v_j(x)$ is bounded above by $\sum_j v_j^2(x)$, which is finite for any $x\in\Omega$. Hence, for any $x\in\Omega$, $\sup_{\|f\|_H\leq 1} |f(x)|$ is uniformly bounded for $f\in H$.
\end{proof}
\begin{lemma}\label{LemProjPow}
For projectors $\Pi_{V_n}$ within separable Hilbert spaces $H$ of functions on
some 
domain $\Omega$ onto finite-dimensional subspaces $V_n$ generated by 
{$H$}-orthonormal 
functions $v_1,\ldots,v_n$ that are completed, we have
\begin{equation*}
P^2_{\Pi_{V_n},\|.\|_H}(x)= \sum_{k>n}v_k^2(x)
\end{equation*}
provided that all point evaluation functionals are continuous.
\end{lemma}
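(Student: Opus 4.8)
The plan is to identify the generalized Power Function with the norm of the tail of the orthonormal expansion, and then to evaluate that norm by a Cauchy--Schwarz argument. First I would use the hypothesis that the $H$-orthonormal system $v_1,\ldots,v_n$ has been \emph{completed} to a full orthonormal basis $\{v_j\}_j$ of $H$. For every $f\in H$ we then have the expansion $f=\sum_j (f,v_j)_H v_j$, while the projector is $\Pi_{V_n}f=\sum_{j=1}^n (f,v_j)_H v_j$. Subtracting, the approximation error is exactly the tail
$$
f-\Pi_{V_n}f=\sum_{k>n}(f,v_k)_H\, v_k ,
$$
an identity holding in $H$.

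Next I would evaluate this at a fixed $x\in\Omega$. Since the point evaluation functional $\delta_x$ is assumed continuous, Lemma \ref{LemSumEx} guarantees that $\{v_k(x)\}_k\in\ell_2$, so the tail $\{v_k(x)\}_{k>n}$ is square-summable; applying $\delta_x$ termwise (justified by its continuity) yields the convergent real series
$$
f(x)-(\Pi_{V_n}f)(x)=\sum_{k>n}(f,v_k)_H\, v_k(x).
$$
Writing $a_k:=(f,v_k)_H$, Parseval's identity turns the constraint $\|f\|_H\le 1$ into $\sum_k a_k^2\le 1$, so that computing the Power Function amounts to maximising the linear functional $\bigl|\sum_{k>n}a_k v_k(x)\bigr|$ over the $\ell_2$-ball.

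Then the supremum is a standard Cauchy--Schwarz computation. The estimate $\bigl|\sum_{k>n}a_k v_k(x)\bigr|\le \bigl(\sum_{k>n}a_k^2\bigr)^{1/2}\bigl(\sum_{k>n}v_k^2(x)\bigr)^{1/2}\le \bigl(\sum_{k>n}v_k^2(x)\bigr)^{1/2}$ gives one inequality, and for sharpness I would exhibit the maximiser obtained by setting $a_k=0$ for $k\le n$ and $a_k=v_k(x)\bigl/\bigl(\sum_{\ell>n}v_\ell^2(x)\bigr)^{1/2}$ for $k>n$. This sequence lies in $\ell_2$ precisely because of the square-summability supplied by Lemma \ref{LemSumEx}, hence corresponds to an admissible $f\in H$ attaining the bound. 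Squaring gives $P^2_{\Pi_{V_n},\|.\|_H}(x)=\sum_{k>n}v_k^2(x)$.

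The only delicate point is the interchange of $\delta_x$ with the infinite sum together with the existence of the extremal $f$; both reduce to the $\ell_2$-membership of $\{v_k(x)\}_k$, which is exactly what Lemma \ref{LemSumEx} furnishes under the continuity-of-evaluation hypothesis (this is also what makes the Power Function finite, consistent with Definition \ref{DefGenPow}). Apart from that, the argument is routine.
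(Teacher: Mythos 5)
Your proof is correct and follows essentially the same route as the paper: expand the error in the completed $H$-orthonormal basis, evaluate pointwise using the continuity of $\delta_x$ (via Lemma \ref{LemSumEx}), and apply Cauchy--Schwarz to the tail. The only difference is that where the paper simply asserts equality for $f\in V_n^\perp$, you exhibit the explicit maximizer with coefficients proportional to $v_k(x)$, $k>n$, which is in fact the slightly more careful way to establish sharpness.
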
 
\begin{proof}
The pointwise error at $x$ is
\begin{align*}
f(x)-\Pi_{V_n}f(x)
&= 
\sum_{k>n}(f,v_k)_Hv_k(x),
\end{align*}
and, thanks to the previous Lemma, we can safely bound its norm as 
\begin{align*}
|f(x)-\Pi_{V_n}f(x)|^2
&\leq 
\sum_{k>n}(f,v_k)_H^2\sum_{j>n}v_j^2(x)\\
&= 
\|f-\Pi_{V_n}f\|_H^2\sum_{j>n}v_j^2(x)\leq  
\|f\|_H^2\sum_{j>n}v_j^2(x),
\end{align*}
with equality if $f\in V_n^\perp$.
\end{proof}
This framework includes also the usual Power Function. 
\begin{lemma}\label{LemPointProj}
Let $X_n$ be a set of $n$ points in a compact domain $\Omega$, and let $V(X_n)$ be 
spanned by the $X_n$-translates of $K$. Then the above notion of
$P_{V(X_n)}$ coincides with the standard notion of the interpolatory Power Function wrt. $X_n$.  
\end{lemma}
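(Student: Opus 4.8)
The plan is to unwind the two definitions and to show they coincide by invoking the fact, recalled in the introduction, that the interpolant $s_{f,X_n}$ is nothing but the $\ns$-orthogonal projection of $f$ onto $V(X_n)$, i.e.\ $s_{f,X_n}=\Pi_{\calh,V(X_n)}f$. The standard interpolatory Power Function is by definition the $\ns$-norm of the pointwise error functional $f\mapsto f(x)-s_{f,X_n}(x)$, that is $\sup_{\|f\|\le 1}|f(x)-s_{f,X_n}(x)|$; our generalized Power Function $P_{V(X_n)}$ is, by Definition \ref{DefGenPow} applied with $H=\ns$, $\Pi=\Pi_{\calh,V(X_n)}$ and $\|\cdot\|_H=\|\cdot\|$, exactly $\sup_{\|f\|\le 1}|f(x)-\Pi_{\calh,V(X_n)}f(x)|$. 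Hence the identity of the two quantities follows as soon as the projector is identified with interpolation.

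First I would verify that Definition \ref{DefGenPow} applies in this setting, i.e.\ that each functional $\delta_x-\delta_x\circ\Pi_{\calh,V(X_n)}$ is continuous on $\ns$. This is immediate: point evaluation $\delta_x$ is continuous on $\ns$ because $K(\cdot,x)$ is its Riesz representer by \eqref{repProp}, and $\Pi_{\calh,V(X_n)}$ is a bounded (orthogonal) projector, so $\delta_x\circ\Pi_{\calh,V(X_n)}$ is continuous as well, and so is the difference.

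Next I would record the identification $s_{f,X_n}=\Pi_{\calh,V(X_n)}f$. This is precisely the statement from the introduction that the interpolant is the projection of $f$ into $V(X_n)$ with respect to $(\cdot,\cdot)$: the interpolant lies in $V(X_n)$, and since it reproduces $f$ on $X_n$, the error $f-s_{f,X_n}$ is $\ns$-orthogonal to every kernel translate $K(\cdot,x_i)$ and therefore to all of $V(X_n)$, which characterizes the $\ns$-orthogonal projection.

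With these two observations, substituting $\Pi_{\calh,V(X_n)}f=s_{f,X_n}$ into the expression for $P_{V(X_n)}(x)$ turns it verbatim into the defining formula for the interpolatory Power Function, which completes the proof. I expect no genuine obstacle here: the content of the lemma is the bookkeeping that the general framework of Definition \ref{DefGenPow} specializes correctly, and the only nontrivial input---the equivalence of interpolation and $\ns$-orthogonal projection---is already available. If an explicit representer is preferred, one may alternatively note that by self-adjointness of $\Pi_{\calh,V(X_n)}$ together with the reproducing property, $f(x)-s_{f,X_n}(x)=(f,(I-\Pi_{\calh,V(X_n)})K(\cdot,x))$, so that $P_{V(X_n)}(x)=\|(I-\Pi_{\calh,V(X_n)})K(\cdot,x)\|$, the familiar closed form of the Power Function.
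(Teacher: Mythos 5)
Your proposal is correct and follows essentially the same route as the paper, whose (one-line) proof likewise rests on the two facts you establish: that the interpolation operator coincides with the projector $\Pi_{\calh,V(X_n)}$, and that both Power Functions are by definition the norm of the pointwise error functional. Your additional details---the continuity check for $\delta_x-\delta_x\circ\Pi_{\calh,V(X_n)}$, the orthogonality argument identifying $s_{f,X_n}$ with the projection, and the closed form $P_{V(X_n)}(x)=\|(I-\Pi_{\calh,V(X_n)})K(\cdot,x)\|$---are all consistent elaborations of what the paper leaves implicit.
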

The proof follows from the fact that the interpolation operator coincides 
with the projector $\Pi_{\calh,V(X_n)}$ and the Power Function is in both cases 
defined as the norm of the pointwise error functional.

%%%%%%%%%%%%%%%%%%%%%%%%%%%%%%%%%%%%%%%%%%%%%%%%%%
\section{Minimal error subspaces}\label{sec:MinErrSub}
In this section we present the main results of this paper. 
Our goal is to analyze the behavior of the approximation error, 
considered from different points of view, depending on the $n$-dimensional 
subspace $V_n$. Roughly speaking, we will see that it is possible to 
exactly characterize the $n$-dimensional subspaces of minimal error, both considering 
the $L_2(\Omega)$ norm of the error and the the $L_2(\Omega)$ norm of the pointwise error.  

The way this problem is addressed in Approximation Theory is the study 
of \textit{widths} (see e.g. the comprehensive monograph \cite{Pin1985}, 
and in particular Chapter 4 for the theory in Hilbert spaces).
 
We will concentrate first on the $n$-width of Kolmogorov. The 
Kolmogorov $n$-width $d_n(A;H)$ of a subset $A$ in an Hilbert space $H$ is defined as
\begin{equation*}
d_n(A;H) := \inf_{\stackrel{V_n \subset H}{\dim(V_n)=n}}\;\;\;\sup_{f \in A}\;\;\;\inf_{v\in V_n} \|f - v_n\|_{H}.
\end{equation*}
It measures how $n$-dimensional subspaces of $H$ can approximate a given subset
$A$. 
If the infimum is attained by a subspace, this is called an  
\textit{optimal subspace}. The interest is in characterizing optimal 
subspaces and to compute or estimate the asymptotic behavior of the 
width, usually letting $A$ to be the unit ball $S(H)$ of $H$.

The first result that introduces and studies  
$n$-widths for native spaces was presented in \cite{SchWen2002}. 
The authors consider the $n$-width 
$d_n(S(\ns); L_2(\Omega))$, simply $d_n$ in the following, and prove that 
\begin{equation*}
d_n = \inf_{\stackrel{V_n \subset L_2}{\dim(V_n)=n}}\;\;\;
\sup_{f \in S(\calh)} \|f - \Pi_{L_2,V_n} f\|_{L_2} = \sqrt{\lambda_{n+1}},
\end{equation*}
and the unique optimal subspace is $E_n$. 
 
This result is the first that exactly highlights the importance of 
analyzing the expansion of the operator $T$ to better understand the 
process of approximation in $\ns$. In the following we will try to deepen this
connection.
Our main concern will be to replace the $L_2(\Omega)$ projector
$\Pi_{L_2,V_n}$ by the $\ns$ projector $\Pi_{\calh,V_n}$, while still keeping the
$L_2(\Omega)$ norm to measure the error. 
The $\ns$ projector is closer to the standard interpolation projector,
and it differs from the $L_2(\Omega)$ projector unless the space $V_n$ is an eigenspace,
see Lemma \ref{TheVn}.

We consider the $L_2(\Omega)$ norm of the error functional in $\ns$ for the 
projection $\Pi_{\calh,V_n}$ into a subspace $V_n\subset\ns$, i.e.,
\begin{equation*}
\sup_{\|f\|_{\calh}\leq  1}\|f-\Pi_{\calh,V_n}f\|_{L_2}
\end{equation*}
and we look for the subspace which minimizes this quantity. 
In other words, following the definition of the Kolmogorov $n$-width, we can define  
\begin{equation*}
\kappa_n:=
\inf_{\stackrel{V_n \subset \calh}{\dim(V_n)=n}}\;\;\;\sup_{f\in S(\calh)}\|f-\Pi_{\calh,V_n}f\|_{L_2}.
\end{equation*}

We recall 
in the next Theorem that $\kappa_n$ is equivalent to $d_n$, i.e., 
the best approximation in $L_2(\Omega)$ of $S(\ns)$ with respect to
$\|\cdot\|_{L_2}$ 
can be achieved using $\ns$ itself and the projector $\Pi_{\calh,V_n}$. 
The result can be found in \cite{NoWo2008}. 

\begin{thm}\label{TheKolWidth}
For any $n>0$ we have
\begin{equation*}
\kappa_n = \sqrt{\lambda_{n+1}}, 
\end{equation*}
and the unique optimal subspace is $E_n$.
\end{thm}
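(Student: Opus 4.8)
The plan is to sandwich $\kappa_n$ between $d_n$ and $d_n$, exploiting the already-recalled identity $d_n = \sqrt{\lambda_{n+1}}$ together with the uniqueness of its optimizer $E_n$, and the coincidence of the two projectors on eigenspaces provided by Lemma \ref{TheVn}. The whole argument is a short two-sided estimate; its only genuine ingredients are the best-approximation property of the $L_2(\Omega)$-projector and that single lemma.

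First I would establish the lower bound $\kappa_n \ge d_n$. For any $n$-dimensional $V_n \subset \calh \subset L_2(\Omega)$ and any $f\in\calh$, the element $\Pi_{\calh,V_n}f$ lies in $V_n$ and therefore cannot beat the $L_2(\Omega)$-best approximation from $V_n$:
$$\|f-\Pi_{\calh,V_n}f\|_{L_2}\ge \inf_{v\in V_n}\|f-v\|_{L_2} = \|f-\Pi_{L_2,V_n}f\|_{L_2}.$$
Taking the supremum over $f\in S(\calh)$ and then the infimum over $V_n\subset\calh$, and noting that every subspace of $\calh$ is in particular a subspace of $L_2(\Omega)$, gives $\kappa_n \ge d_n = \sqrt{\lambda_{n+1}}$.

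For the matching upper bound I would simply test the definition of $\kappa_n$ with the candidate $V_n = E_n$. Since $E_n$ is spanned by eigenfunctions, Lemma \ref{TheVn} guarantees $\Pi_{\calh,E_n}=\Pi_{L_2,E_n}$ on $\calh$, so
$$\sup_{f\in S(\calh)}\|f-\Pi_{\calh,E_n}f\|_{L_2} = \sup_{f\in S(\calh)}\|f-\Pi_{L_2,E_n}f\|_{L_2} = \sqrt{\lambda_{n+1}},$$
the last equality holding because $E_n$ attains $d_n$. Hence $\kappa_n \le \sqrt{\lambda_{n+1}}$, and combined with the lower bound this yields $\kappa_n = \sqrt{\lambda_{n+1}}$.

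Finally, for uniqueness of the optimal subspace I would chase the equality case. If some $n$-dimensional $V_n\subset\calh$ satisfies $\sup_{f\in S(\calh)}\|f-\Pi_{\calh,V_n}f\|_{L_2}=\sqrt{\lambda_{n+1}}$, then the pointwise inequality above forces
$$\sqrt{\lambda_{n+1}} = \sup_{f}\|f-\Pi_{\calh,V_n}f\|_{L_2} \ge \sup_{f}\|f-\Pi_{L_2,V_n}f\|_{L_2} \ge d_n = \sqrt{\lambda_{n+1}},$$
so $V_n$ is itself an optimal subspace for $d_n$; by the uniqueness of the $d_n$-optimizer recalled above, $V_n=E_n$. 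I expect the only delicate point to be this last transfer: one must check that the uniqueness statement for $d_n$, proved among all $L_2(\Omega)$-subspaces, applies here, which it does because $V_n\subset\calh\subset L_2(\Omega)$ and equality of subspaces in $L_2(\Omega)$ is equality as sets of functions. Everything else is routine.
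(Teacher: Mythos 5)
Your proof is correct and takes essentially the same route as the paper: the lower bound $\kappa_n \ge d_n$ via the best-approximation property of $\Pi_{L_2,V_n}$, and the matching upper bound by testing with $E_n$ and invoking Lemma \ref{TheVn}. In fact, your explicit equality-case argument for uniqueness (showing any optimizer of $\kappa_n$ must also optimize $d_n$, hence equal $E_n$) is spelled out more completely than in the paper, whose proof only establishes the value $\kappa_n = d_n = \sqrt{\lambda_{n+1}}$ and leaves the uniqueness claim implicit.
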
 
\begin{proof}
Since $\ns \subset L_2(\Omega)$ and since $\Pi_{L_2, V_n} f$ is the 
best approximation from $V_n$ of $f\in\ns$ wrt. $\|\cdot\|_{L_2}$, we have
\begin{align*}
d_n& = \inf_{\stackrel{V_n \subset L_2}{\dim(V_n)=n}}\;\;\;\sup_{f\in S(\calh)}\|f-\Pi_{L_2,V_n}f\|_{L_2}\\
  &\leqslant  \inf_{\stackrel{V_n \subset \calh}{\dim(V_n)=n}}\;\;\;\sup_{f\in S(\calh)}\|f-\Pi_{L_2,V_n}f\|_{L_2}\\
  &\leqslant \inf_{\stackrel{V_n \subset \calh}{\dim(V_n)=n}}\;\;\;\sup_{f\in S(\calh)}\|f-\Pi_{\calh,V_n}f\|_{L_2}= \kappa_n.
\end{align*}
On the other hand, since $\Pi_{L_2,E_n} = \Pi_{\calh,E_n}$ on $\ns$ (Lemma \ref{TheVn}), 
\begin{align*}
\kappa_n & \leqslant \sup_{f\in S(\calh)}\|f-\Pi_{\calh,E_n}f\|_{L_2}\\
  & = \sup_{f\in S(\calh)}\|f-\Pi_{L_2,E_n}f\|_{L_2} = d_n,	
\end{align*}
since $E_n$ is optimal for $d_n$. Hence $\kappa_n = d_n = \sqrt{\lambda_{n+1}}$.
\end{proof}

We now move to another way of studying the approximation error. 
Instead of directly considering the
$L_2(\Omega)$ norm of approximants, we first take the 
norm of the pointwise error of the  $\Pi_{\calh,E_n}$ projector  
and then minimize its $L_2(\Omega)$ 
norm over $\Omega$. This means to find a subspace which minimizes
the $L_2(\Omega)$ norm $\|P_{V_n}\|_{L_2}$  of the Power Function $P_{V_n}$  
among all $n$-dimensional subspaces $V_n\subset\ns$. Using the definition of the 
generalized Power Function, we can rephrase the problem in the fashion of the previous results by defining 
\begin{equation*}
p_n := \inf_{\stackrel{V_n \subset \calh}{\dim(V_n)=n}}\;\;\;\left\|\sup_{f\in S(\calh)}|f(\cdot)-\Pi_{\calh,V_n}f(\cdot)|\right\|_{L_2}.
\end{equation*}

In the following Theorem, mimicking \cite[Theorem 1]{Ism1968}, we prove that, 
also in this case, the optimal $n$-dimensional subset is $E_n$, and $p_n$ 
can be expressed in terms of the eigenvalues. 
\begin{thm}\label{L2MinPow}
For any $n>0$ we have
\begin{equation*}
p_n  = \sqrt{\sum_{j>n} \lambda_j}, 
\end{equation*}
and the unique optimal subspace is $E_n$.
\end{thm}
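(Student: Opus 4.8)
The plan is to reduce the minimization of $\|P_{V_n}\|_{L_2}$ to a transparent extremal problem for the $L_2$-norms of an $\calh$-orthonormal basis, and then to solve a simple linear program. First I would apply Lemma \ref{LemProjPow} with $H=\calh$: completing an $\calh$-orthonormal basis $v_1,\dots,v_n$ of $V_n$ to a full basis $\{v_k\}_k$ of $\ns$ gives $P_{V_n}^2(x)=\sum_{k>n}v_k^2(x)$. Since every term is nonnegative and $\int_\Omega K(x,x)\,dx<\infty$ by continuity of $K$ on the compact set $\Omega$, Tonelli's theorem permits termwise integration, so that
\begin{equation*}
\|P_{V_n}\|_{L_2}^2=\sum_{k>n}\|v_k\|_{L_2}^2.
\end{equation*}
The reproducing property yields $\sum_k v_k^2(x)=\|K(\cdot,x)\|^2=K(x,x)$ for \emph{any} complete $\calh$-orthonormal basis, hence $\sum_k\|v_k\|_{L_2}^2=\int_\Omega K(x,x)\,dx=\sum_j\lambda_j$ is a finite constant independent of the basis. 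Therefore $\|P_{V_n}\|_{L_2}^2=\sum_j\lambda_j-\sum_{k=1}^n\|v_k\|_{L_2}^2$, and minimizing the left-hand side is equivalent to maximizing $\sum_{k=1}^n\|v_k\|_{L_2}^2$ over all $\calh$-orthonormal systems $v_1,\dots,v_n$.

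To attack this maximization I would expand each basis function in the eigenbasis, writing $v_k=\sum_j a_{kj}\sqrt{\lambda_j}\varphi_j$. Then $\calh$-orthonormality reads $\sum_j a_{kj}a_{lj}=\delta_{kl}$, and because $\|\sqrt{\lambda_j}\varphi_j\|_{L_2}^2=\lambda_j$ with the $\sqrt{\lambda_j}\varphi_j$ also $L_2$-orthogonal, one obtains
\begin{equation*}
\sum_{k=1}^n\|v_k\|_{L_2}^2=\sum_j\lambda_j\,c_j,\qquad c_j:=\sum_{k=1}^n a_{kj}^2.
\end{equation*}
The coefficient array $A=(a_{kj})$ has orthonormal rows, so $P:=A^\top A$ is an orthogonal projection of rank $n$ on $\ell_2$, whose diagonal entries are precisely $c_j=\|Pe_j\|^2$. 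This forces $0\le c_j\le 1$ and $\sum_j c_j=\operatorname{tr}P=n$. Thus the quantity to maximize is $\sum_j\lambda_j c_j$ subject to the constraints $0\le c_j\le1$ and $\sum_j c_j=n$.

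With $\lambda_1\ge\lambda_2\ge\cdots>0$, this linear program is solved by loading the available weight onto the largest eigenvalues, i.e. $c_j=1$ for $j\le n$ and $c_j=0$ otherwise, giving $\sum_{k=1}^n\|v_k\|_{L_2}^2\le\sum_{j=1}^n\lambda_j$. Substituting back gives $\|P_{V_n}\|_{L_2}^2\ge\sum_{j>n}\lambda_j$, and the choice $V_n=E_n$ (with basis $\sqrt{\lambda_j}\varphi_j$) attains equality, so $p_n=\sqrt{\sum_{j>n}\lambda_j}$. For uniqueness in the regime $\lambda_n>\lambda_{n+1}$, equality in the program forces $c_j=1$ for $j\le n$ and $c_j=0$ for $j>n$; since $P$ is a projection, $c_j=1$ means $Pe_j=e_j$ and $c_j=0$ means $Pe_j=0$, so the range of $A^\top$ equals $\operatorname{span}\{e_1,\dots,e_n\}$. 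Consequently each $v_k$ lies in $\operatorname{span}\{\sqrt{\lambda_1}\varphi_1,\dots,\sqrt{\lambda_n}\varphi_n\}=E_n$, whence $V_n=E_n$.

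The main obstacle is pinning down the feasible region for the weights $c_j$, and in particular the bound $c_j\le1$; this is exactly where the orthogonal-projection structure of $P=A^\top A$ does the essential work, converting an infinite-dimensional variational problem over orthonormal systems into a finite linear program over $\{\,0\le c_j\le1,\ \sum_j c_j=n\,\}$. Once this reduction is in place, both the optimal value and the uniqueness of the maximizer follow by an elementary rearrangement argument. The only routine technical points to verify are the termwise integration and the finiteness of the trace $\sum_j\lambda_j$, both secured by continuity of $K$ on the compact domain $\Omega$.
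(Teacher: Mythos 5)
Your proof is correct, and its skeleton is the same as the paper's: both expand an $\calh$-orthonormal basis of $V_n$ (completed to a full basis) in the eigenbasis, both use Lemma \ref{LemProjPow} to write $\|P_{V_n}\|_{L_2}^2=\sum_j\lambda_j-\sum_j c_j\lambda_j$ with $c_j$ the sums of squared coefficients over the first $n$ basis elements, and both then bound $\sum_j c_j\lambda_j$ by $\sum_{j\le n}\lambda_j$. Where you genuinely diverge is in how that extremal problem is handled. The paper asserts the double normalization $\sum_j a_{jk}^2=\sum_k a_{jk}^2=1$ of the change-of-basis matrix and then proves the bound by an ad hoc combinatorial device (splitting the cumulative sums $\sum_j q_j$ into integer ranges and comparing stepwise with the decreasing $\lambda_j$); you instead observe that $P=A^\top A$ is a rank-$n$ orthogonal projection, so that $0\le c_j=\|Pe_j\|^2\le 1$ and $\sum_j c_j=\operatorname{tr}P=n$, and then solve the resulting linear program by the standard rearrangement argument. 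Your route buys two things: the feasible region for the $c_j$ is identified cleanly and rigorously (the projection/trace argument replaces the paper's unproved normalization claims), and---more importantly---you actually prove uniqueness of the optimal subspace when $\lambda_n>\lambda_{n+1}$, via $c_j\in\{0,1\}$ forcing $Pe_j\in\{e_j,0\}$ and hence $V_n=E_n$. The paper only verifies that $E_n$ attains equality and never argues uniqueness, so your treatment is more complete; your explicit caveat that uniqueness requires $\lambda_n>\lambda_{n+1}$ is also more honest than the unqualified claim in the statement, since uniqueness genuinely fails when $\lambda_n=\lambda_{n+1}$. The only points you should spell out if you write this up fully are the justification of the bathtub step (e.g.\ $\sum_{j\le n}\lambda_j(1-c_j)\ge\lambda_n\sum_{j>n}c_j\ge\sum_{j>n}\lambda_jc_j$) and the completeness of $\{\sqrt{\lambda_j}\varphi_j\}$ in $\ns$, which both you and the paper use implicitly.
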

\begin{proof}
For a subset $V_n$ we can consider a $\ns$-orthonormal basis $\{v_k\}_{k=1}^n$ and complete it to an orthonormal 
basis $\{v_k\}_{k\in\N}$ of $\ns$. We can move from the eigenbasis 
to this basis using a matrix $A = (a_{ij})$ as
\begin{equation}\label{MatChaBas}
v_k = \sum_{j=1}^{\infty} a_{jk} \sqrt{\lambda_j} \varphi_j,
\end{equation}
where $\sum_{j=1}^{\infty}a_{jk}^2=\sum_{k=1}^{\infty}a_{jk}^2=1$.
Hence, the power function of $V_n$ is
\begin{align*}
 P_{V_n}(x)^2 &= \sum_{k>n} v(x)^2 =\sum_{k>n} \left(\sum_{j=1}^{\infty} a_{jk} \sqrt{\lambda_j} \varphi_j(x)\right)^2 \\
&= \sum_{i,j=1}^{\infty} \sqrt{\lambda_i}\varphi_i(x) \sqrt{\lambda_j} \varphi_j(x) \sum_{k>n} a_{ik} a_{jk},
\end{align*}
and, defining $ q_j = \sum_{k=1}^n a_{jk}^2$, we can compute its norm as  
\begin{align*}
\|P_{V_n}\|_{L_2}^2 =&  \int_{\Omega} \sum_{k>n} \left(\sum_{j=1}^{\infty} a_{jk} \sqrt{\lambda_j} \varphi_j(x)\right)^2  dx  \\=& \sum_{k>n} \sum_{j=1}^{\infty} a_{jk}^2 \lambda_j = \sum_{j=1}^{\infty} \lambda_j - \sum_{j=1}^{\infty} q_j\lambda_j.
\end{align*} 
Now we need to prove that $\sum_{j=1}^{\infty} q_j\lambda_j\leqslant
\sum_{j=1}^{n} \lambda_j$. 

Let $m = \lceil \sum_j q_j\rceil\leq n$. We split the cumulative sum over the $q_j$ into integer ranges 
$$
i-1<\sum_{j=1}^{j_i} q_j\leqslant i,\;1\leqslant i\leq m.
$$
Then $j_m$ can be infinite, but $j_{m-1}$ is finite, and
since $0\leq q_j\leq 1$ we get stepwise
$$
\begin{array}{rcl}
0< \sum_{j=j_{i-1}+1}^{j_i} q_j&\leqslant& 1,\\
j_i-j_{i-1}
&\geq&  1,\\
j_i&\geq&  i,\\
j_i\geq j_{i-1}+1& \geq&  i\\
\end{array} 
$$
for $1\leq i\leq m$, using $j_0=0$.
Since the sequence of the eigenvalues is non negative and non increasing, 
this implies
\begin{eqnarray*}
\sum_{j=1}^{\infty} q_j\lambda_j 
&\leq & 
\sum_{i=1}^{m-1} q_{j_{i-1}+1}\lambda_{j_{i-1}+1}+ \lambda_{j_{m-1}+1}\sum_{j=j_{m-1}+1}^{j_m} q_j\\
&\leq & 
\sum_{i=1}^{m} \lambda_{j_{i-1}+1}\leq \sum_{i=1}^{m}\lambda_{i}\leq\sum_{i=1}^{n}\lambda_{i}.
\end{eqnarray*}

If we take $V_n = E_n$ and
$\{\sqrt{\lambda_j}\varphi_j\}_{j=1}^n$ as its basis, the matrix $A$ in
\eqref{MatChaBas} is the infinite identity matrix.  
Thus equality holds in the last inequality. 
\end{proof}

\section{Restriction to closed subspaces}\label{sec:restriction}
The previous results motivate the interest for the knowledge and 
study of the eigenbasis. But from a practical point of view 
there is some limitation: the eigenbasis cannot be computed in 
general,  and it can not be used for truly scattered data
approximation, 
since there exists at least a set $X_n\subset\Omega$ such that the 
collocation matrix of $E_n$ on $X_n$ is singular (by the Mairhuber-Curtis 
Theorem, see e.g. \cite[Theorem 2.3]{We2005}).

To overcome this problem we consider instead subspaces of $\ns$ of the 
form $V_N=V(X_N)= \text{span}\{K(\cdot,x):x\in X_N\}$, where $X_N = \{x_1,\dots,x_N\}$ 
is a possibly large but finite set of points in $\Omega$. 
The basic idea is to replace $\ns$ by $V(X_N)$ in order to get a good 
numerical approximation to the true eigenbasis with respect to $\ns$. 

To this end, we repeat the constructions of the previous section for a finite-dimensional native space, 
i.e., the problem of finding, for $n<N$, an $n$-dimensional 
subset $V_n$ which minimizes the error, in some norm, among all the subspaces of $V(X_N)$ of dimension $n$, and that can now be exactly computed. 

One could expect that the optimal subset for this restricted problem is the projection of $E_n$ into $V(X_n)$. In fact, as we will see, this is not the case, but the optimal subspace will still approximate the true eigenspaces in a near-optimal sense.

The analysis of such point-based subspaces can be carried out by 
looking at general closed subspaces of the native space.
It can be proven (see \cite[Th. 1]{MoSc2009}) that, if $V$ is a closed
subspace of $\ns$, it is the native space on $\Omega$ of the kernel $K_V(x,y) =
\Pi^x_{\calh,V}\Pi^y_{\calh,V} K(x,y)$, with inner product given by the restriction of
the one of $\ns$. 
The restricted operator $T_V:L_2(\Omega)\to L_2(\Omega)$ defined as
\begin{equation}\label{discreteTOperator}
T_V f(x) = \int_{\Omega} K_V(x,y) f(y) dy,\;\; x\in\Omega,
\end{equation}
maps $L_2(\Omega)$ into $V$. Then 
Theorem \ref{mercer} applied to this operator gives the eigenbasis for $V$ and $T_V$ on $\Omega$ and the corresponding
eigenvalues, which will be denoted as $\{\varphi_{j,V}\}_j$,
$\{\lambda_{j,V}\}_j$. They can be 
finitely or infinitely many, depending on the dimension of $V$. We will use the notation 
$E_{n,V} = \text{span}\{\sqrt{\lambda_{j,V}} \varphi_{j,V},\ j=1,\dots,n\}$ if $n \leq \dim(V)$.

This immediately proves the following Lemma that was already used in Section \ref{sec:Pro}. 
\begin{lemma}\label{doubleOrth}
Any closed subspace $V$ of the native space has a unique basis 
which is $\ns$-orthogonal and $L_2(\Omega)$-orthonormal.
\end{lemma}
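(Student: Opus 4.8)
The plan is to obtain the required basis directly from the Mercer decomposition of the restricted operator $T_V$, and then to show that double orthogonality forces any admissible basis to be an eigenbasis of $T_V$, which yields uniqueness.

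For existence I would simply read off the statement from the construction carried out just above. Since $V$ is a closed subspace of $\ns$, it is the native space of the kernel $K_V$, equipped with the restriction of $(\cdot,\cdot)$. Applying Theorem \ref{mercer} to the associated operator $T_V$ from \eqref{discreteTOperator} produces eigenfunctions $\{\varphi_{j,V}\}_j$ that are $L_2(\Omega)$-orthonormal and orthogonal in the native space of $V$; as the latter inner product is merely the restriction of $(\cdot,\cdot)$, these functions are $\ns$-orthogonal as well. Since the $L_2(\Omega)$-normalized eigenfunctions of $T_V$ form a basis of $V$ (finitely or infinitely many, according to $\dim V$), existence is immediate.

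For uniqueness I would first record the restricted analogue of \eqref{TandInn}, namely that $(f,g)_{L_2} = (f, T_V g)$ \fa $f\in V$, $g\in L_2(\Omega)$; this holds because $K_V$ reproduces $V$, so the derivation of \eqref{TandInn} applies verbatim to $T_V$. Let $\{v_j\}_j$ now be any $\ns$-orthogonal, $L_2(\Omega)$-orthonormal basis of $V$ and put $\sigma_j = 1/\|v_j\|^2$, so that $\{\sqrt{\sigma_j}\,v_j\}_j$ is $\ns$-orthonormal. Expanding $T_V v_k\in V$ in this orthonormal basis and using $(v_j, T_V v_k) = (v_j, v_k)_{L_2} = \delta_{jk}$ gives $T_V v_k = \sigma_k v_k$. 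Thus every admissible basis consists of eigenfunctions of $T_V$, so it must coincide with the Mercer eigenbasis $\{\varphi_{j,V}\}_j$ up to the unavoidable ambiguities: a sign for each simple eigenvalue, and an orthonormal rotation within each eigenspace of a repeated eigenvalue.

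The main obstacle is the uniqueness half, together with the need to interpret the word \emph{unique} correctly. Existence is essentially bookkeeping on top of Mercer's theorem, whereas uniqueness rests on the short spectral computation above, which in turn requires the identity $(f,g)_{L_2} = (f,T_V g)$ on $V$; I would justify that identity with care. I would also state plainly that the basis is unique only modulo sign changes and rotations inside degenerate eigenspaces, since otherwise the claim is literally false when an eigenvalue of $T_V$ has multiplicity greater than one.
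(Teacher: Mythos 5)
Your proof is correct, and its existence half is exactly the paper's argument: the paper obtains the lemma ``immediately'' from the preceding construction, namely that a closed subspace $V$ is the native space of the kernel $K_V$ (by the cited result of Mouattamid and Schaback), so that Theorem~\ref{mercer} applied to $T_V$ yields an $L_2(\Omega)$-orthonormal, $\ns$-orthogonal eigenbasis of $V$. Where you differ is that the paper never actually argues uniqueness; it only appends the interpretive remark that uniqueness ``is understood here like stating uniqueness of the eigenvalue expansion of the integral operator,'' i.e., that the eigenspaces are unique. Your spectral computation supplies the missing link: using the restricted identity $(f,g)_{L_2}=(f,T_V g)$ for $f\in V$, $g\in L_2(\Omega)$, you show that any $\ns$-orthogonal, $L_2(\Omega)$-orthonormal basis $\{v_j\}_j$ satisfies $T_V v_k=\sigma_k v_k$ with $\sigma_k=1/\|v_k\|^2$, hence must consist of eigenfunctions of $T_V$, which reduces the claim to the uniqueness of the spectral decomposition. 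This is essentially the computation the paper performs elsewhere, in the proof of Lemma~\ref{TheVn} (there for $T$ on all of $\ns$, under the hypothesis that the two projectors coincide), so your route buys a self-contained proof of the lemma at the cost of a few extra lines. Your closing caveat --- that uniqueness can only mean uniqueness of the eigenspaces, up to signs and orthonormal changes of basis inside degenerate eigenspaces --- is exactly the qualification the paper itself states immediately after the lemma, so it is a point of agreement rather than a discrepancy.
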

Uniqueness is understood here like stating uniqueness 
of the eigenvalue expansion of the integral operator defined by the kernel,
i.e., the eigenspaces are unique.

Before analyzing the relation between the approximation in $V$ and in 
$\ns$ we establish a connection between the corresponding kernels and Power Functions.
\begin{lemma}
If $V\subset\ns$ is closed, 
\begin{equation}\label{powAndKers}
P_{V,\calh}(x)^2 = K(x,x) - K_V(x,x)\;\;\text{ for all } x\in\Omega.
\end{equation}
Moreover, if $U\subset V\subset\ns$ are closed, the Power Functions are related as
\begin{equation*}
P_{U,\calh}(x)^2 = P_{U,V}(x)^2 + P_{V,\calh}(x)^2\;\;\text{ for all } x\in\Omega.
\end{equation*}
\end{lemma}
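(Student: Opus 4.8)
The plan is to realise each Power Function as the $\calh$-norm of the Riesz representer of its error functional, and then to read off both identities from the Pythagorean theorem for orthogonal projectors. First I would fix $x\in\Omega$ and rewrite the error functional of $\Pi_{\calh,V}$. Using the reproducing property \eqref{repProp} together with the self-adjointness of the orthogonal projector $\Pi_{\calh,V}$, every $f\in\ns$ satisfies
\[
f(x)-(\Pi_{\calh,V}f)(x)=(f,K(\cdot,x))-(f,\Pi_{\calh,V}K(\cdot,x))=\bigl(f,(I-\Pi_{\calh,V})K(\cdot,x)\bigr).
\]
Hence the representer of the error functional at $x$ is $(I-\Pi_{\calh,V})K(\cdot,x)$, and by Definition \ref{DefGenPow} the Power Function $P_{V,\calh}(x)$ is exactly its $\calh$-norm. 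Since $\Pi_{\calh,V}$ is an orthogonal projector, $K(\cdot,x)$ splits orthogonally into $\Pi_{\calh,V}K(\cdot,x)\in V$ and $(I-\Pi_{\calh,V})K(\cdot,x)\in V^\perp$, so Pythagoras gives $P_{V,\calh}(x)^2=\|K(\cdot,x)\|^2-\|\Pi_{\calh,V}K(\cdot,x)\|^2$.

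Next I would identify the two terms. The reproducing property yields $\|K(\cdot,x)\|^2=(K(\cdot,x),K(\cdot,x))=K(x,x)$. For the second term, the key observation is that the projected kernel is precisely the reproducing kernel of $V$: since every $f\in V$ satisfies $f(x)=(f,K(\cdot,x))=(f,\Pi_{\calh,V}K(\cdot,x))$ and $\Pi_{\calh,V}K(\cdot,x)\in V$, uniqueness of the Riesz representer in $V$ forces $K_V(\cdot,x)=\Pi_{\calh,V}K(\cdot,x)$, in agreement with the double-projection formula $K_V(x,y)=\Pi^x_{\calh,V}\Pi^y_{\calh,V}K(x,y)$. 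Evaluating at $x$ and using idempotency and self-adjointness, $K_V(x,x)=(\Pi_{\calh,V}K(\cdot,x),K(\cdot,x))=\|\Pi_{\calh,V}K(\cdot,x)\|^2$. Substituting both identities proves \eqref{powAndKers}.

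For the additivity formula I would apply \eqref{powAndKers} three times and telescope. By the result quoted from \cite{MoSc2009}, $V$ is itself a native space on $\Omega$ with reproducing kernel $K_V$ and with inner product inherited from $\ns$; because $U\subset V$ carries that same inner product, the reproducing kernel of $U$ computed inside $V$ still coincides with $K_U$. Reading \eqref{powAndKers} once for the pair $(V,\ns)$, once for $(U,\ns)$, and once for $(U,V)$ gives $P_{V,\calh}(x)^2=K(x,x)-K_V(x,x)$, $P_{U,\calh}(x)^2=K(x,x)-K_U(x,x)$, and $P_{U,V}(x)^2=K_V(x,x)-K_U(x,x)$; adding the first and the third cancels the term $K_V(x,x)$ and reproduces the second, as claimed.

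The lemma is routine once the representer picture is in place, so the only delicate point is the bookkeeping around $K_V$: namely, checking that the single-variable projection $\Pi_{\calh,V}K(\cdot,x)$ really agrees with the symmetric double-projection that defines $K_V$, and, for the telescoping step, confirming that passing from $\ns$ to $V$ leaves the subspace kernel $K_U$ unchanged. Both facts reduce to the compositional identity $\Pi_{\calh,U}\Pi_{\calh,V}=\Pi_{\calh,U}$ for nested subspaces $U\subset V$ and to the inner product on $V$ being the restriction of that on $\ns$, so no analytic difficulty arises beyond the Pythagorean splitting itself. (Alternatively, one could bypass the telescoping and argue the second identity directly, decomposing $I-\Pi_{\calh,U}=(\Pi_{\calh,V}-\Pi_{\calh,U})+(I-\Pi_{\calh,V})$ into orthogonal pieces and applying Pythagoras to $(I-\Pi_{\calh,U})K(\cdot,x)$.)
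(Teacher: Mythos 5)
Your proposal is correct and follows essentially the same route as the paper: both realise the pointwise error as $(f,K(\cdot,x)-K_V(\cdot,x))$, identify $K_V(\cdot,x)=\Pi_{\calh,V}K(\cdot,x)$ as the Riesz representer of evaluation in $V$, compute the norm of that representer to get \eqref{powAndKers}, and obtain the additivity by telescoping the kernel differences. The only cosmetic difference is that the paper extracts the norm via Cauchy--Schwarz with the normalized kernel difference as extremal function, whereas you invoke the Pythagorean splitting of $K(\cdot,x)$ under the orthogonal projector; you also spell out the telescoping step and the invariance of $K_U$ under passage from $\calh$ to $V$, which the paper compresses into ``easily follows.''
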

\begin{proof}
The Power Function is the norm of the error functional. For $f\in\ns$, $\|f\|\leq 1$, and $x\in\Omega$ we have 
\begin{align*}
&|f(x) - \Pi_{\calh, V}f(x)| = |(f, K(\cdot,x) - K_V(\cdot, x)|\\
&\leq \|f\| \|K(\cdot,x) - K_V(\cdot, x)\| \leq \sqrt{K(x,x) - K_V(x, x)},
\end{align*}
with equality if $f$ is the normalized difference of the kernels. 
This proves \eqref{powAndKers}, and the relation between the Power Functions easily follows.
\end{proof}

Since $V$ is a native space itself, the results of the previous section 
hold also for $V$. We can then define in $V$ the analogous notions of $d_n$, $\kappa_n$
and $p_n$, and by Theorems \ref{TheKolWidth}, \ref{L2MinPow} we know that they are
all minimized by $E_{n,V}$, with values 
$\sqrt{\lambda_{n+1,V}}$, $\sqrt{\lambda_{n+1,V}}$,  
and $\sqrt{\sum_{j>n}\lambda_{j,V}}$, respectively. 

These results deal with the best approximation of the unit ball $S(V)$, but
allow also to face the problem of the constrained optimization in the case of
$p_n$, 
i.e., the minimization of the error of approximation of $S(\ns)$ using only subspaces of $V$.
Indeed, thanks to Lemma \ref{powAndKers}, we know that for any $V_n\subset V$ 
and for any $x\in\Omega$, the squared power functions of $\ns$ 
and of $V$ differ by an additive constant. This means that the 
minimality of $E_{n,V}$ does not change if we consider the standard power 
function on $\ns$. Moreover, 
\begin{align*}
\int_{\Omega}P_{E_n,\ns}^2(x) dx =& \int_{\Omega}P_{E_n,V}^2(x) dx + \int_{\Omega}P_{V,\ns}^2(x) dx \\
& = \sum_{j=1}^m \lambda_{j,V}- \sum_{j=1}^n \lambda_{j,V} + \sum_{j=1}^{\infty} \lambda_{j} - \sum_{j=1}^m \lambda_{j,V}\\
& = \sum_{j=1}^{\infty} \lambda_{j} - \sum_{j=1}^n \lambda_{j,V}.
\end{align*}

This proves the following corollary of Theorem \ref{L2MinPow}.
\begin{cor}\label{L2minVm}
Let $V\subset\ns$ be a closed subspace of $\ns$, and let $n\leqslant \dim(V)$. For any $n$-dimensional subspace $V_n\subseteq V$ we have
\begin{equation*}
\|P_{V_n}\|_{L_2}\geqslant\sqrt{\sum_{j=1}^{\infty} \lambda_j - \sum_{j=1}^{n} \lambda_{j,V}},
\end{equation*}
and $E_{n,V}$ is the unique optimal subspace. In particular
\begin{equation*}
\|P_{V}\|_{L_2}=\sqrt{\sum_{j=1}^{\infty} \lambda_j - \sum_{j=1}^{\dim{V}} \lambda_{j,V}}.
\end{equation*}

\end{cor}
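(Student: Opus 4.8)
The plan is to deduce this corollary from Theorem \ref{L2MinPow} by exploiting that $V$ is itself a native space, together with the additive (Pythagorean) relation between Power Functions established in the preceding lemma. The point is that Theorem \ref{L2MinPow} was proved abstractly for an arbitrary native space, so applying it \emph{inside} $V$ — whose reproducing kernel is $K_V$ and whose eigensystem is $\{\lambda_{j,V},\varphi_{j,V}\}_j$ — immediately yields that among all $n$-dimensional subspaces $V_n\subseteq V$ the quantity $\|P_{V_n,V}\|_{L_2}$ is minimized, uniquely, by $E_{n,V}$, with minimal value $\sqrt{\sum_{j>n}\lambda_{j,V}}$.

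Next I would transport this from the intrinsic norm of $V$ to the ambient norm of $\calh$. Applying the relation $P_{U,\calh}(x)^2 = P_{U,V}(x)^2 + P_{V,\calh}(x)^2$ with $U=V_n$ and integrating over $\Omega$ gives
\begin{equation*}
\|P_{V_n}\|_{L_2}^2 = \|P_{V_n,V}\|_{L_2}^2 + \|P_{V,\calh}\|_{L_2}^2 .
\end{equation*}
Since the second summand is independent of the choice of $V_n$, minimizing the left-hand side over $n$-dimensional $V_n\subseteq V$ is equivalent to minimizing $\|P_{V_n,V}\|_{L_2}^2$; hence both the optimal value and the uniqueness of the minimizer $E_{n,V}$ carry over verbatim from the previous paragraph.

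It remains to evaluate the constant term. Writing $\|P_{V,\calh}\|_{L_2}^2 = \int_\Omega\big(K(x,x)-K_V(x,x)\big)\,dx$ via \eqref{powAndKers} and inserting the Mercer expansions $K(x,x)=\sum_j\lambda_j\varphi_j(x)^2$ and $K_V(x,x)=\sum_j\lambda_{j,V}\varphi_{j,V}(x)^2$, term-by-term integration (legitimate by the uniform convergence in Theorem \ref{mercer} and the $L_2$-orthonormality of the eigenfunctions) yields $\|P_{V,\calh}\|_{L_2}^2=\sum_j\lambda_j-\sum_j\lambda_{j,V}$. Combining with the previous display and using $\sum_{j>n}\lambda_{j,V}=\sum_{j=1}^{\dim V}\lambda_{j,V}-\sum_{j=1}^n\lambda_{j,V}$ gives exactly the stated lower bound $\sqrt{\sum_j\lambda_j-\sum_{j=1}^n\lambda_{j,V}}$, attained uniquely by $E_{n,V}$. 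The final ``in particular'' assertion is then the special case $n=\dim V$, where $V_n=V$ forces $P_{V_n,V}\equiv 0$ and the first summand drops out.

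The main obstacle — really the only nontrivial point — is the justification of the term-by-term integration for $K_V$: one needs $K_V$ to be itself a continuous positive definite kernel so that Mercer's theorem applies to it, and $\sum_j\lambda_{j,V}<\infty$ so that the difference of the two series is well defined. The former follows from $K_V=\Pi^x_{\calh,V}\Pi^y_{\calh,V}K$ being a projection of the continuous kernel $K$, and the latter from $\int_\Omega K_V(x,x)\,dx\le\int_\Omega K(x,x)\,dx<\infty$, guaranteed by the standing compactness and continuity assumptions. A mild bookkeeping subtlety is that $\dim V$ may be infinite, in which case $\sum_{j=1}^{\dim V}$ is read as the full series; the argument is unaffected.
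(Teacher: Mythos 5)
Your proof is correct and follows essentially the same route as the paper: apply Theorem \ref{L2MinPow} inside the native space $V$ with kernel $K_V$, transfer the minimality and the minimizer via the pointwise Pythagorean relation $P_{V_n,\calh}^2 = P_{V_n,V}^2 + P_{V,\calh}^2$ (whose second term is independent of $V_n$), and evaluate the additive constant by integrating \eqref{powAndKers} against the Mercer expansions of $K$ and $K_V$. Your closing remarks on the continuity of $K_V$, the finiteness of $\sum_j\lambda_{j,V}$, and the case $\dim V=\infty$ are justifications the paper leaves implicit, but they introduce nothing beyond its argument.
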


This corollary has two consequences. At one hand, if we want to have a small 
Power Function, we need to choose a subspace $V$ which provides a good 
approximation of the true eigenvalues. On the other hand, when dealing with 
certain point based subspaces, we can control the decay of the Power Function 
depending on the number of points we are using, and this bound will provide a 
bound also on the convergence of the discrete eigenvalues to the true one. 
The last fact will be discussed in more detail 
in Section \ref{sec:asympEig}.

We remark that there is a relation between $E_n$ and $E_{n,V}$: as 
mentioned before, the optimal subspace $E_{n,V}$ is not the projection 
of $E_n$ into $V$, but is near to be its optimal approximation from $V$. 
To see this, observe that the operator $T_V$ is the projection of $T$ into $V$. 
In fact, given $K_V(x,y) = \sum_{j} \lambda_{j,V}\varphi_{j,V}(x) \varphi_{j,V}(y)$, we have for any $f\in L_2(\Omega)$
\begin{align*}\label{Tmcharact}
T_V f (x) &= \int_{\Omega}K_V(x,y) f(y) dy = \sum_{j}
\sqrt{\lambda_{j,V}}\varphi_{j,V}(x) 
(\sqrt{\lambda_{j,V}}\varphi_{j,V},f)_{L_2} \\
&= \sum_{j} \sqrt{\lambda_{j,V}}\varphi_{j,V}(x) (\sqrt{\lambda_{j,V}}\varphi_{j,V},T f )_{L_2} = \Pi_{\calh,V}T f(x).
\end{align*} 
This means that the couples $(\lambda_{j,V},\varphi_{j,V})$ are precisely the 
Bubnov - Galerkin approximations (see e.g. \cite[Sec. 18.4]{KVZRS72}) of the 
solutions $(\lambda_{j},\varphi_{j})$ of the eigenvalue problem for the
restricted 
operator $T : \ns\to \ns$ (which is still a compact, positive and self-adjoint
operator). 
We can then use the well known estimates on the convergence of the Bubnov -
Galerkin method 
to express the distance between $E_n$ and $E_{n,V}$.
 
The following Proposition collects 
convergence results which follow  
from \cite[Th. 18.5, Th. 18.6]{KVZRS72}.
\begin{prop}\label{prop:conv}
Let $V_1\subset V_2\subset \dots\subset V_n\subset\dots$ be a sequence of closed 
subspaces which become dense in $\ns$. For $1\leqslant j\leqslant \dim{V_n}$ we have
\begin{enumerate}[(i)]
\item $\lambda_{j,V_n}\leqslant \lambda_{j,V_{n+1}}\leqslant \lambda_j$,
\item Let $r\in\N$ be the multiplicity of $\lambda_j$ and \\
$F_{j,n} = \{f\in V_n : T_{V_n} f = \lambda_{i,V_n} f \text{ and }
  \lim_{n\to\infty}\lambda_{i,V_n} = \lambda_{j}  \}$. 
For $n$ sufficiently large, 
 $\dim F_{j,n} = r$ and  there exists 
$c_{j,n} > 1/\lambda_j,\ c_{j,n}\rightarrow_n 1/\lambda_j$ s.t.
\begin{equation}\label{BG-conv}
	\|\varphi_j - \Pi_{\calh,F_{j,n}} \varphi_j \|\leqslant c_{j,n} \lambda_j \| \varphi_j - \Pi_{\calh,V_n} \varphi_j\|.  
\end{equation}
\end{enumerate}
\end{prop}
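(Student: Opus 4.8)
The plan is to reduce everything to the convergence theory of the Bubnov--Galerkin method. By the computation preceding the statement, $T_V=\Pi_{\calh,V}T$, so the pairs $(\lambda_{j,V_n},\varphi_{j,V_n})$ are precisely the Bubnov--Galerkin approximations of the eigenpairs $(\lambda_j,\varphi_j)$ of the compact, positive, self-adjoint operator $T:\ns\to\ns$ relative to the subspaces $V_n$. Since the $V_n$ are nested and become dense in $\ns$, the hypotheses of \cite[Th. 18.5, Th. 18.6]{KVZRS72} are satisfied, and the two assertions will follow once the correspondence is made explicit and the constants are translated into the present normalization.

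For part (i) I would argue directly via the Courant--Fischer max--min principle rather than through the cited theorems. The crucial observation is that for $g\in V$ the Rayleigh quotients of $T_V$ and $T$ coincide: since $\Pi_{\calh,V}$ is an $\ns$-orthogonal, hence self-adjoint, projection with $\Pi_{\calh,V}g=g$, one has $(T_V g,g)=(\Pi_{\calh,V}Tg,g)=(Tg,\Pi_{\calh,V}g)=(Tg,g)=\|g\|_{L_2}^2$. Therefore
\begin{equation*}
\lambda_{j,V}=\max_{\substack{W\subseteq V\\ \dim W=j}}\ \min_{\substack{g\in W\\ g\neq 0}}\frac{(Tg,g)}{(g,g)},
\end{equation*}
a quantity whose integrand no longer depends on $V$. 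Since $V_n\subseteq V_{n+1}\subseteq\ns$, the admissible families of $j$-dimensional subspaces $W$ only grow with $n$, so the max--min can only increase, yielding $\lambda_{j,V_n}\leq\lambda_{j,V_{n+1}}\leq\lambda_j$.

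For part (ii) I would invoke the cited Bubnov--Galerkin theory. Theorem 18.5 supplies the spectral convergence together with the preservation of multiplicity: for $n$ large, exactly $r$ approximate eigenvalues cluster at $\lambda_j$, so that the associated approximate invariant subspace $F_{j,n}$ has $\dim F_{j,n}=r$ and no spurious eigenvalues appear. Theorem 18.6 then provides the eigenfunction estimate, bounding the distance from $\varphi_j$ to $F_{j,n}$ by a constant times the best-approximation error $\|\varphi_j-\Pi_{\calh,V_n}\varphi_j\|$. The remaining task, and the step I expect to be the main obstacle, is to recast that constant in the stated form $c_{j,n}\lambda_j$ and to prove the sharp limit $c_{j,n}\to 1/\lambda_j$ (equivalently $c_{j,n}\lambda_j\to 1$) rather than mere boundedness. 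This requires tracking how the operator eigenvalue $\lambda_j$ and the converging Galerkin value $\lambda_{j,V_n}$ enter the constant under the reference's normalization, and verifying that the spectral gap around $\lambda_j$ stabilizes so that the residual components of $\Pi_{\calh,V_n}\varphi_j$ orthogonal to $F_{j,n}$ become negligible. The intuitive content is the quasi-optimality statement that, asymptotically, projecting onto the approximate eigenspace $F_{j,n}$ costs no more than projecting onto all of $V_n$; establishing the clean constant $1$ in this limit is the delicate point.
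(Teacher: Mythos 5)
Your proposal is correct, and for part (ii) it coincides with the paper's own argument: the paper's entire proof consists of the identification $T_{V}=\Pi_{\calh,V}T$ on $\ns$ (established in the text immediately before the Proposition), which exhibits the pairs $(\lambda_{j,V_n},\varphi_{j,V_n})$ as Bubnov--Galerkin approximations of the eigenpairs of the compact, positive, self-adjoint operator $T:\ns\to\ns$, followed by a citation of \cite[Th. 18.5, Th. 18.6]{KVZRS72}. No constants are tracked there either, so the ``main obstacle'' you flag --- recasting the reference's constant as $c_{j,n}\lambda_j$ with $c_{j,n}\to 1/\lambda_j$ --- is exactly the step the paper delegates wholesale to the reference; you are not missing anything the paper supplies. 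Where you genuinely depart from the paper is part (i): instead of reading monotonicity off the cited theorems, you prove it directly from the Courant--Fischer max--min principle, using the observation that for $g\in V$ one has $(T_Vg,g)=(Tg,\Pi_{\calh,V}g)=(Tg,g)=\|g\|_{L_2}^2$, so the Rayleigh quotient is $V$-independent and the admissible families of $j$-dimensional trial subspaces only grow along the nested sequence. This argument is sound: $T_V$ restricted to $V$ is compact, positive and $\ns$-self-adjoint, and its nonzero spectrum (with multiplicities) agrees with that of $T_V$ on $L_2(\Omega)$, since any eigenfunction with nonzero eigenvalue automatically lies in $V$. It is also more self-contained than the paper's citation, and the paper itself endorses this route in the Remark closing Section \ref{sec:restriction}, where monotonicity under domain inclusion is said to follow ``also by the min/max characterization of the eigenvalues.'' In short: the paper's approach buys uniformity (both items, including $\dim F_{j,n}=r$ and the quasi-optimality constant, come from one reference), while your max--min argument buys an elementary, independently verifiable proof of (i) that avoids unpacking the reference's normalization.
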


Equation \eqref{BG-conv} proves in particular that $E_{n,V}$ is an 
asymptotically optimal approximation of $E_n$. Indeed, under the 
assumptions of the last Proposition, we have
\begin{equation}
\| \varphi_j - \Pi_{\calh,V_n} \varphi_j\|\leq	\|\varphi_j - \Pi_{\calh,F_{j,n}} \varphi_j \|\leqslant c  \| \varphi_j - \Pi_{\calh,V_n} \varphi_j\|,  
\end{equation}
with $c\to 1$ as $m\to\infty$.

\begin{rem}
To conclude this section we point out that, in addition to the point based sets,
there is another remarkable way to produce closed subspaces of the native
space. Namely, if $\Omega_1\subset \Omega$ is any 
Lipschitz subdomain of $\Omega$,
$\calh(\Omega_1)$ is a closed subset of $\ns$. This implies that the
eigenvalues are decreasing with respect to the inclusion of the base domain (as
can by proven also by the min/max characterization of the eigenvalues).
\end{rem}

%%%%%%%%%%%%%%%%%%%%%%%%%%%%%%%%%%%%%%%%%%%%%%%%
\section{Asymptotic decay of the eigenvalues}\label{sec:asympEig}
We established a relation between the approximation error and 
the eigenvalues of $T$. This allows to use the well known bounds 
on the approximation error to give corresponding bounds on the decay of the eigenvalues. 
These results were already presented in \cite{SchWen2002}, but we include them
here for completeness and add some extensions.

We consider a set of asymptotically uniformly distributed points $X_n$, such that the fill distance $h_n$ behaves like
\begin{equation*}
h_n := \sup_{x\in\Omega}\min_{x_j\in X_n}\|x-x_j\|\leq c n^{-1/d},
\end{equation*}
where $c$ is independent of $n$.
 
If the kernel is translational invariant and Fourier transformable 
on $\R^d$ and $\Omega$ is bounded and with a smooth enough boundary, 
there are standard error estimates for the error between $f\in\ns$ and its interpolant $s_{f,X_n}$ on the points $X_n$ (see \cite{Sch1995}). 
 
For kernels $k(x-y) := K(x,y)$ with finite smoothness, we have that 
$\hat k(\omega) \sim (1+\|w\|)^{-\beta-d}$ for $\|w\|\to\infty$, and   
\begin{equation}\label{notLocalBound}
\|f - s_{f,X_n} \|_{\infty} \leq c h_{n}^{\beta/2}\|f\|,\; \fa f\in \ns,
\end{equation}
while for infinitely smooth kernels we have
\begin{equation*}
\|f - s_{f,X_n} \|_{\infty} \leq c \exp(-c /h_{n})\|f\|,\; \fa f\in \ns.
\end{equation*}
Both bounds are in fact bounds on the 
%GA $L_{\infty}$ (\Omega missing)
$L_{\infty}(\Omega)$ norm of the Power Function. If instead one considers directly the 
%GS $L_2$ (\Omega missing)
$L_2(\Omega)$ error, for kernels with finite smoothness the estimate can be improved as follows:
\begin{equation*}
\|f - s_{f,X_n} \|_{L_2} \leq c h_{n}^{(\beta + d)/2}\|f\|,\; \fa f\in \ns.
\end{equation*}

This immediately leads to the following theorem.

\begin{thm}\label{TheEigDecay}
Under the above assumptions on $K$ and $\Omega$, the eigenvalues decay at least like 
\begin{equation*}
\sqrt{\lambda_{n+1}} < c_1 n^{-(\beta + d) / 2 d}
\end{equation*}
for a kernel with smoothness $\beta$, and at least like 
\begin{equation*}
\sqrt{\lambda_{n+1}} < c_2 \exp(-c_3 n^{1/d}),
\end{equation*}
for kernels with unlimited smoothness.  
The constants $c_1, c_2, c_3$ are independent of $n$, but dependent on $K$, $\Omega$, and the space dimension.
\end{thm}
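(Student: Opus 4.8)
The plan is to identify $\sqrt{\lambda_{n+1}}$ with an $n$-width and then bound that width from above by evaluating it on a concrete point-based subspace. By Theorem \ref{TheKolWidth} we have $\sqrt{\lambda_{n+1}} = \kappa_n$, and since $\kappa_n$ is an infimum over all $n$-dimensional subspaces of $\calh$, every admissible competitor produces an upper bound. I would take $V_n = V(X_n)$ generated by the $X_n$-translates of $K$, where $X_n$ is the asymptotically uniform node set with $h_n \leq c\, n^{-1/d}$. Recalling from Lemma \ref{LemPointProj} that $\Pi_{\calh,V(X_n)}$ is exactly the interpolation operator, so that $\Pi_{\calh,V(X_n)}f = s_{f,X_n}$, this gives
\[
\sqrt{\lambda_{n+1}} = \kappa_n \leq \sup_{\|f\|\leq 1}\|f-\Pi_{\calh,V(X_n)}f\|_{L_2} = \sup_{\|f\|\leq 1}\|f-s_{f,X_n}\|_{L_2}.
\]
After this reduction, only the quoted interpolation error estimates together with the fill-distance bound $h_n \leq c\, n^{-1/d}$ need to be substituted.

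For a kernel of finite smoothness $\beta$ I would insert the sharpened $L_2$ estimate $\|f-s_{f,X_n}\|_{L_2}\leq c\, h_n^{(\beta+d)/2}\|f\|$ directly into the displayed inequality, obtaining $\sqrt{\lambda_{n+1}} \leq c\, h_n^{(\beta+d)/2} \leq c_1\, n^{-(\beta+d)/(2d)}$, which is the first claim. For an infinitely smooth kernel no separate $L_2$ bound is quoted, so I would pass through the $L_\infty$ estimate using the finiteness of the measure of the compact set $\Omega$, namely $\|g\|_{L_2}\leq |\Omega|^{1/2}\|g\|_\infty$. Applying this to $g = f-s_{f,X_n}$ and using $\|f-s_{f,X_n}\|_\infty \leq c\exp(-c/h_n)\|f\|$ yields $\sqrt{\lambda_{n+1}} \leq c\,|\Omega|^{1/2}\exp(-c/h_n)$, and substituting $h_n\leq c\, n^{-1/d}$ gives $\sqrt{\lambda_{n+1}} \leq c_2\exp(-c_3\, n^{1/d})$ after absorbing constants.

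The only genuinely delicate point is the choice of estimate in the finite-smoothness case. Converting the $L_\infty$ Power Function bound $\|P_{V(X_n)}\|_\infty \leq c\, h_n^{\beta/2}$ to an $L_2$ bound would only produce the weaker rate $n^{-\beta/(2d)}$; it is therefore \emph{essential} to invoke the improved $L_2$ interpolation estimate to recover the extra $d$ in the exponent and match the stated rate $n^{-(\beta+d)/(2d)}$. Everything else is a direct substitution. The strict inequality in the statement is harmless, since the quoted error estimates carry a generic constant that can be enlarged slightly to absorb the passage from $\leq$ to $<$, and the upper bound on $\kappa_n$ coming from a single subspace $V(X_n)$ is all that is required.
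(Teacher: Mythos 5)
Your proposal is correct and follows essentially the same route as the paper: the paper's (implicit) proof likewise identifies $\sqrt{\lambda_{n+1}}$ with the $n$-width from Section \ref{sec:MinErrSub}, bounds it from above by the interpolation error on the quasi-uniform point set $X_n$, and substitutes the quoted estimates — the improved $L_2$ bound $\|f-s_{f,X_n}\|_{L_2}\leq c\,h_n^{(\beta+d)/2}\|f\|$ in the finite-smoothness case and the $L_\infty$ bound combined with $h_n\leq c\,n^{-1/d}$ in the infinitely smooth case. Your remark that the plain $L_\infty$ Power Function bound would lose the factor $h_n^{d/2}$, making the improved $L_2$ estimate essential for the stated rate, is exactly the point the paper's preceding discussion is arranged around.
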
 

It is important to notice that the asymptotics of the eigenvalues is 
known for the kernels of limited smoothness, and on $\R^d$. 
If the kernel is of order $\beta$, its native space on $\R^d$ is norm 
equivalent to the Sobolev space $H^{(\beta+d)/2}$. In these spaces the
$n$-width, 
and hence the eigenvalues, decay like $\Theta(n^{-\frac{\beta+d}{2d}})$ (see
\cite{Jer1970}). 
This means that in Sobolev spaces one can recover (asymptotically) 
the best order of approximation using kernel spaces.

This can be done also with point based spaces. The following 
statement follows from Corollary \ref{L2minVm}, applying the same ideas as
before. 
Observe that, in this case, we need to consider the bound \eqref{notLocalBound}.
\begin{cor}\label{TheEigApprox}
Under the above assumptions on $K$ and $\Omega$, we have 
\begin{equation*}
0 \leq \lambda_j - \lambda_{j,V(X_n)} < c_1 n^{-\beta / d}, \;\; 1\leq j\leq n,
\end{equation*}
for a kernel with smoothness $\beta$, and at least like 
\begin{equation*}
0 \leq \lambda_j - \lambda_{j,V(X_n)} < c_2 \exp(-c_3 n^{1/d}), \;\; 1\leq j\leq n,
\end{equation*}
for kernels with unlimited smoothness.  
The constants $c_1, c_2, c_3$ are independent of $n$, but dependent on $K$, $\Omega$, and the space dimension.
\end{cor}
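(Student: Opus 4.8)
The goal is to bound the gap $\lambda_j - \lambda_{j,V(X_n)}$ between the true eigenvalues and the eigenvalues of the restricted operator $T_{V(X_n)}$, for $1\le j\le n$. The natural starting point is Corollary \ref{L2minVm}, which tells me that $\|P_{V(X_n)}\|_{L_2}^2 = \sum_{j=1}^\infty \lambda_j - \sum_{j=1}^{\dim V} \lambda_{j,V(X_n)}$. This identity is the bridge between the Power Function (which I can bound via the interpolation error estimates) and the eigenvalue gaps (which I want to control).

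**Main steps:**

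First I would combine the monotonicity $\lambda_{j,V(X_n)}\le\lambda_j$ from Proposition \ref{prop:conv}(i) with the Power Function identity. Since each term $\lambda_j-\lambda_{j,V(X_n)}$ is nonnegative, a single gap is bounded above by the total gap:
\begin{equation*}
0\le \lambda_j - \lambda_{j,V(X_n)} \le \sum_{k=1}^{\dim V}\bigl(\lambda_k-\lambda_{k,V(X_n)}\bigr)\le \sum_{k=1}^{\infty}\lambda_k - \sum_{k=1}^{\dim V}\lambda_{k,V(X_n)} = \|P_{V(X_n)}\|_{L_2}^2.
\end{equation*}
Second, I would bound the right-hand side using the $L_2$ Power Function estimate. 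Since the squared Power Function is the square of the pointwise interpolation error norm, integrating the bound \eqref{notLocalBound} over $\Omega$ gives $\|P_{V(X_n)}\|_{L_2}^2\le c\,h_n^{\beta}$ (note the $L_\infty$ bound \eqref{notLocalBound} carries a factor $h_n^{\beta/2}$, so its square contributes $h_n^\beta$, and this is why the remark in the statement directs us to \eqref{notLocalBound} rather than the sharper $L_2$ estimate). Third, I would substitute the fill-distance asymptotics $h_n\le c\,n^{-1/d}$, yielding $h_n^\beta\le c\,n^{-\beta/d}$, which produces the claimed $c_1 n^{-\beta/d}$ bound in the finite-smoothness case. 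The infinitely-smooth case is identical, replacing the polynomial estimate by $\|P_{V(X_n)}\|_{L_\infty}\le c\exp(-c/h_n)$ and squaring.

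**The main obstacle:**

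The delicate point is the passage from a single gap to the \emph{full} sum of gaps in the first displayed inequality, and in particular ensuring that bounding one eigenvalue gap by the whole tail $\|P_{V(X_n)}\|_{L_2}^2$ is legitimate and not wasteful in a way that breaks the stated rate. Here one must check that the constants $c_1,c_2,c_3$ genuinely absorb the summation and the domain-dependent factor $\left(\int_\Omega K(x,x)\,dx\right)^{1/2}$ from the embedding, and that the estimate \eqref{notLocalBound} applies uniformly to the relevant approximands. A subtler issue is that \eqref{notLocalBound} is stated for $f\in\ns$ and its interpolant, whereas the Power Function norm $\|P_{V(X_n)}\|_{L_2}$ is the worst-case error over $S(\calh)$; I would need to confirm that \eqref{notLocalBound} is exactly an estimate on the $L_\infty$ norm of the Power Function (as the surrounding text asserts) so that squaring and integrating is immediate. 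Once that identification is granted, the rest is routine.
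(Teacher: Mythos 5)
Your proposal is correct and is essentially the paper's own argument: the paper proves this corollary only by remarking that it ``follows from Corollary \ref{L2minVm}'' combined with the bound \eqref{notLocalBound} and the fill-distance assumption $h_n\leq c\,n^{-1/d}$, which is exactly the chain you spell out (individual gap $\leq$ total gap $=\|P_{V(X_n)}\|_{L_2}^2\leq |\Omega|\,\|P_{V(X_n)}\|_{L_\infty}^2$). Your observation that one must square and integrate the $L_\infty$ Power Function bound \eqref{notLocalBound}, rather than invoke the sharper $L_2$ interpolation estimate (whose supremum sits outside the integral and therefore does not bound $\|P_{V(X_n)}\|_{L_2}$), is precisely the point of the paper's sentence ``we need to consider the bound \eqref{notLocalBound}.''
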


This Corollary and the previous Theorem proves that, using point based sets with
properly chosen points, 
one can achieve at the same time a fast decay of the true eigenvalues and a fast convergence of the discrete ones.

Both results in this section raise some questions about the converse implication. 
From Theorem \ref{TheEigDecay} we know that the smoothness of the kernel 
guarantees a fast decay of the eigenvalues. But we can also start from a 
given expansion to construct a kernel. Is it possible to conclude smoothness of 
the kernel from fast decay of the eigenvalues? 
 
Corollary \ref{TheEigApprox} tells us that uniformly distributed 
point based sets provide a good approximation of the eigenvalues. We will see 
in Section \ref{sec:alg} and \ref{sec:num} that one can numerically construct 
point based sets whose eigenvalues are close to the true ones. 
Is it possible to prove that these sets are 
necessarily asymptotically uniformly distributed?

%%%%%%%%%%%%%%%%%%%%%%%%%%%%%%%%%%%%%%%%%%%%%
%%%%%%%%%%%%%%%%%%%%%%%%%%%%%%%%%%%%%%%%%%%%%
%%%%%%%%%%%%%%%%%%%%%%%%%%%%%%%%%%%%%%%%%%%%%
%%%%%%%%%%%%%%%%%%%%%%%%%%%%%%%%%%%%%%%%%%%%%
\section{Algorithms}\label{sec:alg}

If we consider a closed subspace $V_N=V(X_N)$ spanned by translates 
of the kernel on a set $X_N$ of $N$ points in $\Omega$, it is 
possible to explicitly construct $E_{n,V_N}$, $n\leq N$. 

The number $N$ of points should be large enough to simulate $L_2(\Omega)$ inner products by discrete formulas. 
The first method we present aims at a direct construction 
of the eigenspaces and gives some insight on the structure 
of the basis, but it is numerically not convenient since it 
involves the computation of two Gramians.
For stability and computability reasons, we shall then focus on 
lower-dimensional subspaces of $V(X_N)$. There are various ways 
to construct these, and we present two. We will see in Section 
\ref{sec:num} that these approximation are very effective. 

From now on we will use the subscript $N$ in place 
of $V(X_N)$ to simplify the notation, keeping in mind 
that there is an underlying set of points $X_N\subset \Omega$ 
and the corresponding subspace $V(X_N)$.

\subsection{Direct construction}
We use at first the fact that the eigenbasis is the 
unique set of $N$ functions in $V(X_N)$ 
which is orthonormal in $L_2(\Omega)$ and orthogonal in $\ns$, 
where uniqueness is understood in the sense of uniqueness 
of the eigendecomposition of the integral operator 
\eqref{discreteTOperator}.

Given any couple of inner products $(\cdot,\cdot)_a$ and $(\cdot,\cdot)_b$ 
on $V(X_N)$, it is always possible to build a basis $\{v_j\}_{j=1}^N$ 
of $V(X_N)$ which is $b$-ortho{\em normal}
and $a$-ortho{\em gonal}
with norms $\{\sigma_j\}_{j=1}^N$. Let 
\begin{align*}
A =& [( K(\cdot,x_i),K(\cdot,x_j))_a]_{i,j=1}^N,\\
B =& [(K(\cdot,x_i),K(\cdot,x_j))_b]_{i,j=1}^N
\end{align*}
be the Gramians with respect to the two inner products of the standard 
basis $K(\cdot,x_1)$, $\dots$, $K(\cdot, x_N)$ of $K$ translates. 
Following the notation of \cite{PaSc2011}, to construct the basis we 
need to construct an invertible matrix $C_V$ of change of basis to express 
this new basis with respect to the standard basis. To have the right
orthogonality, 
we need $C_V^T A C_V = \Sigma$ and $C_V^T B C_V = I$, where $\Sigma$ is the 
diagonal matrix having on the diagonal the $a$-norms of the new basis. This 
means to simultaneously diagonalize the two Gramians, and since they are 
symmetric and positive definite this is always possible, e.g. in the following way:
\begin{itemize}
\item $B = L L^T$ be a Cholesky decomposition,
\item define $C = L^{-1} A L^{-T}$ (which is symmetric and positive definite),
\item let $C = U \Gamma U^T$ be a SVD decomposition,
\item define $C_V = L^{-T} U$.
\end{itemize}
Observe that, for practical use, it is more convenient to swap the 
role of $A$ and $B$. In this way we construct the basis 
$\{\sqrt{\lambda_{j,N}}\varphi_{j,N}\}_{j=1}^N$, which is $\ns$-orthonormal 
hence more suitable for approximation purposes, and, moreover, 
we obtain directly the eigenvalues of order $N$ as 
$\Sigma = \text{diag}(\lambda_{j,N},j=1,\dots,N)$. 
 
In both ways we are just computing a generalized diagonalization 
of the pencil $(A,B)$, up to a proper scaling of the diagonals.
In our case $A$ is the usual kernel matrix, while $B_{ij} = (K(\cdot,x_i),K(\cdot,x_j))_{L_2}$.
Thus, provided we know $B$, we can explicitly construct the basis. 
This can be done, for a general kernel, using a large set of 
points to approximate the $L_2(\Omega)$ inner product. 
 
Numerical experiments (see Section \ref{sec:num}) suggest 
that this construction of the eigenspaces is highly unstable 
also in simple cases, since it requires to solve an high 
dimensional matrix eigenvalue problem. Another way to face the problem 
consists of greedy procedures as considered next. 

%%%%%%%%%%%%%%%%%%%%%%%%%%%%%%%%%%%%%%%%%%%%%%%%%%%%%
\subsection{Greedy approximation} \label{sec:greedy}
Instead of directly constructing the subspace $E_{n,N}$ via $N\times N$ matrix eigenvalue problems as described before, we can first select $n$ points in $X_N$ such that working on these points is more stable than working with the full original matrix, and then solve the problem in $V(X_n)$.
The selection of the set $X_n$ is performed with a greedy construction of the Newton basis (see \cite{MuSch2009, DMScWe2005}).

First we show how to construct the eigenspaces via the Newton basis. Assume that $v_1,\ldots,v_n$ is the Newton basis for $V(X_n)$. Then
$$
T_{n}f(x)=\sum_{i=1}^n v_i(x)\sum_{j=1}^n(v_i,v_j)_{L_2}(f,v_j)
$$
and if $\lambda_{j,n}$ is an eigenvalue  with eigenfunction $\varphi_{j,n}$ then
$T_{n}\varphi_{j,n}= \lambda_{j,n}\varphi_{j,n} $ implies 
$$
\lambda_{j,n}(\varphi_{j,n},v_i)=\sum_{k=1}^n (v_i,v_k)_{L_2}(\varphi_{j,n},v_k).
$$
Thus the coefficients of the eigenbasis with respect to the Newton basis are the eigenvectors of the $L_2(\Omega)$ Gramian of the Newton basis. Experimentally, the Newton basis is nearly $L_2(\Omega)$ orthogonal.
Thus the above procedure should have a nice Gramian matrix, provided that the $L_2(\Omega)$ inner products that are near zero can be calculated without loss of accuracy.

To select the points we use two similar greedy strategies, based on maximization of $L_{\infty}(\Omega)$ and $L_2(\Omega)$ norms of the Power Function. The first point is chosen as
\begin{equation*}
x_1=\argmax_{x\in X_N} \left\|\frac{K(\cdot,x)}{\sqrt{K(x,x)}}\right\|_{L_{\infty}}\;\text{ or }\; x_1=\argmax_{x\in X_N} \left\|\frac{K(\cdot,x)}{\sqrt{K(x,x)}}\right\|_{L_2}.
\end{equation*}
Denoting by $X_i$ the already chosen points, the $(i+1)$-th point is selected as 
\begin{equation*}
x_{i+1} = \argmax_{x\in X_N\setminus X_i}  \|v_{i+1}\|_{L_{\infty}}^2\;\text{ or }\; x_{i+1} = \argmax_{x\in X_N\setminus X_i}  \|v_{i+1}\|_{L_2}^2.
\end{equation*}
The point sets selected by the two strategies are different, but we will see in the next section that they provide similar results in terms of approximation of the eigenspaces.

%%%%%%%%%%%%%%%%%%%%%%%%%%%%%%%%%%%%%%%%%%%
%%%%%%%%%%%%%%%%%%%%%%%%%%%%%%%%%%%%%%%%%%%%%
\section{Experiments}\label{sec:num}

\subsection{Sobolev kernels}

In this section we consider the Mat\'ern kernels of order $\beta = 0, 1, 2, 3$, 
whose native spaces on $\R^d$ are norm equivalent to the Sobolev spaces $H^{(\beta + d) / 2}$. 

In these spaces the asymptotic behavior of the Kolmogorov width, 
hence of the eigenvalues, is known as recalled in Section \ref{sec:asympEig}. 
We assume here that the same bounds  
hold in a bounded domain (the unit disk), 
and we want to compare it with the discrete eigevalues of point based sets, 
which can be computed with one of the methods of the previous section. 

To this aim, we start from a grid of equally spaced points in $[-1, 1]^2$ 
restricted to the unit disk, so that the number of points inside the disk 
is $m \approx 10^4$. We use this grid both for point selection and to 
approximate the $L_2(\Omega)$ inner products as weighted pointwise products, i.e.,
$$
(f,g)_{L_2}\approx \frac{\pi}{m}\sum_{j=1}^m f(x_j) g(x_j).
$$
We select $n = 200$ points by the greedy $L_{\infty}(\Omega)$ maximization of the Power Function in the unit disk. 
 
The eigenvalues are then computed with the method of Section \ref{sec:greedy}, i.e, as eigenvalues of the $L_2(\Omega)$ Gramian of the Newton basis.

The results are shown in Figure \ref{fig:MatOrder}. As expected, for any order 
under consideration there exist a positive constant $c$ such that the discrete 
eigenvalues decay with the same rate of the Sobolev best approximation.

Moreover, we expect that the discrete eigenvalues converge to the true 
ones with a rate that can also be controlled by $\beta$. Indeed, 
according to Corollary \ref{TheEigApprox}, we have
\begin{equation*}
0 \leq \lambda_j - \lambda_{j,V(X_n)} < c_1 n^{-\beta / d}, \;\; 1\leq j\leq n.
\end{equation*}
To verify this, we instead look at the decay of 
$$
\sum_{j=0}^{\infty} \lambda_j - \sum_{j=0}^{n} \lambda_{j,V(X_n)}.
$$
since we can exactly compute the first term. Indeed, since the kernels are radial, we have
$$
\sum_{j=1}^{\infty}\lambda_j = \int_{\Omega} K(x,x) dx = \pi K(0,0).
$$
Results are presented in Figure \ref{fig:MatDiff}. From the experiments 
it seems that we can  
improve the convergence speed somewhat,  
and in fact obtain a rate of 
order $(\beta + d/2)/d$ instead of $\beta /d$.
This may be another instance of the ``gap of $d/2$'' already 
observed in \cite{schaback-wendland:2002-1}. Sometimes, observed convergence
rates are by $d/2$ better than proven ones.

\begin{figure}[hbt]
\centering
\begin{tabular}{cc}
\includegraphics[width=0.5 \textwidth]{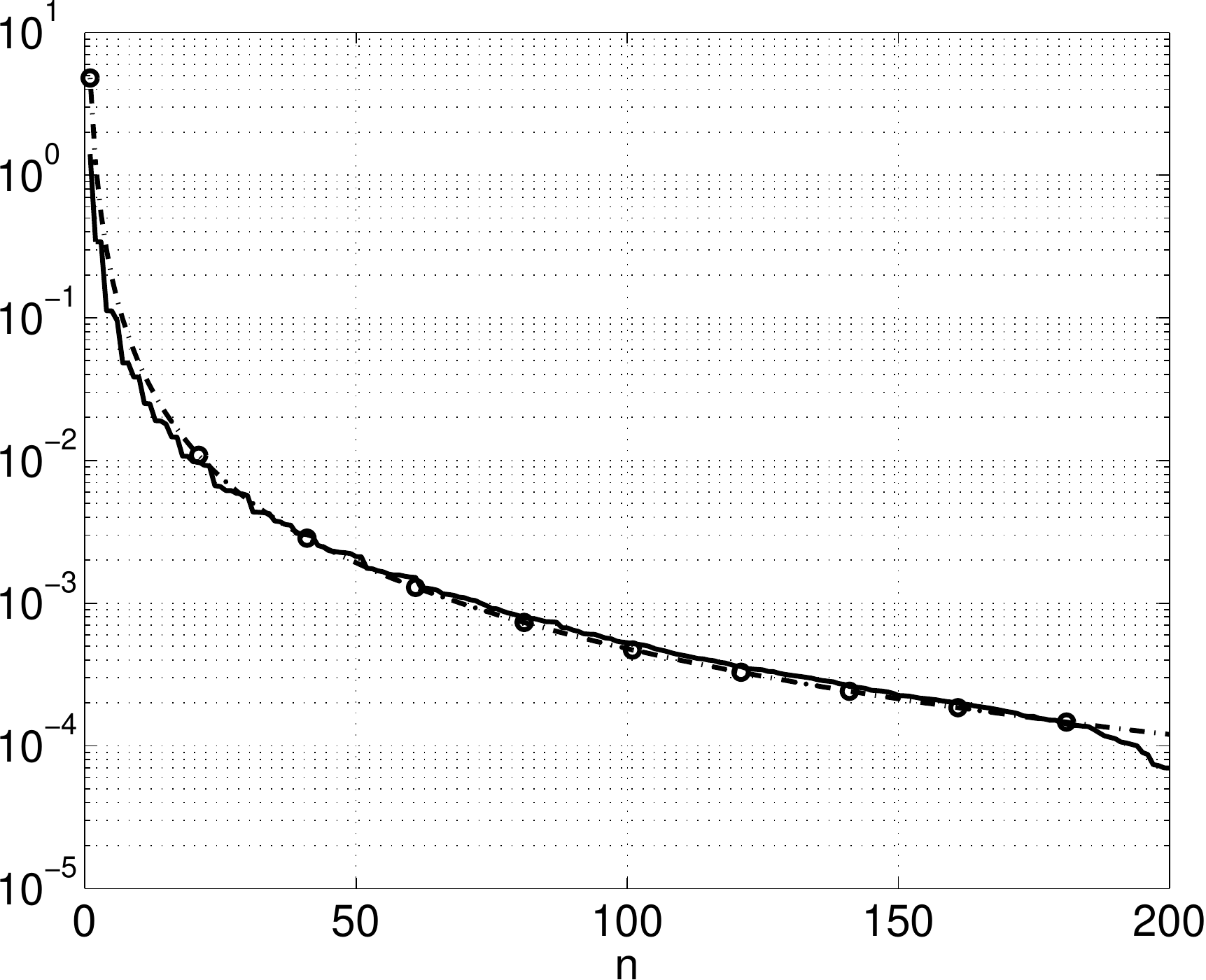}&
\includegraphics[width=0.5 \textwidth]{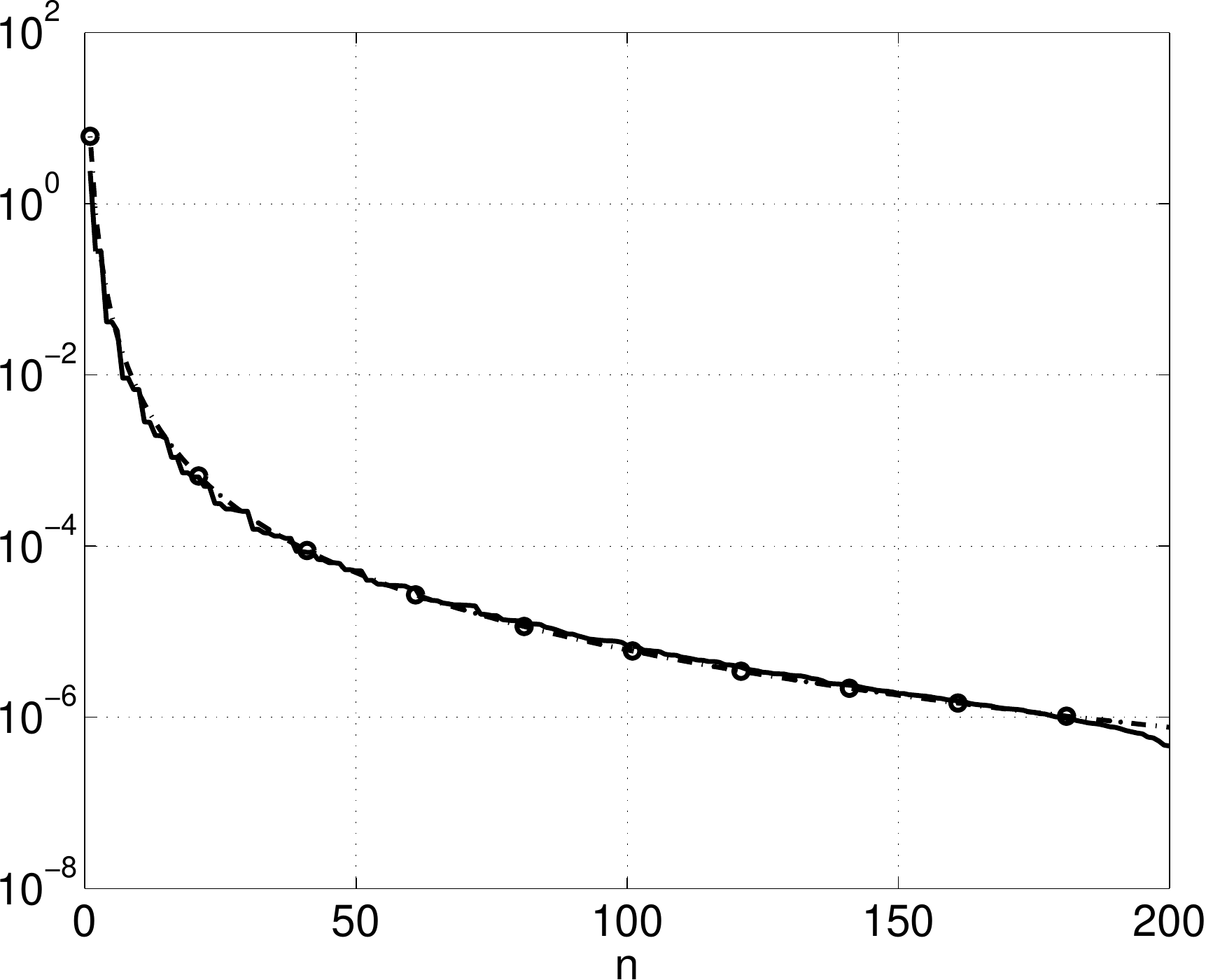}\\

\includegraphics[width=0.5 \textwidth]{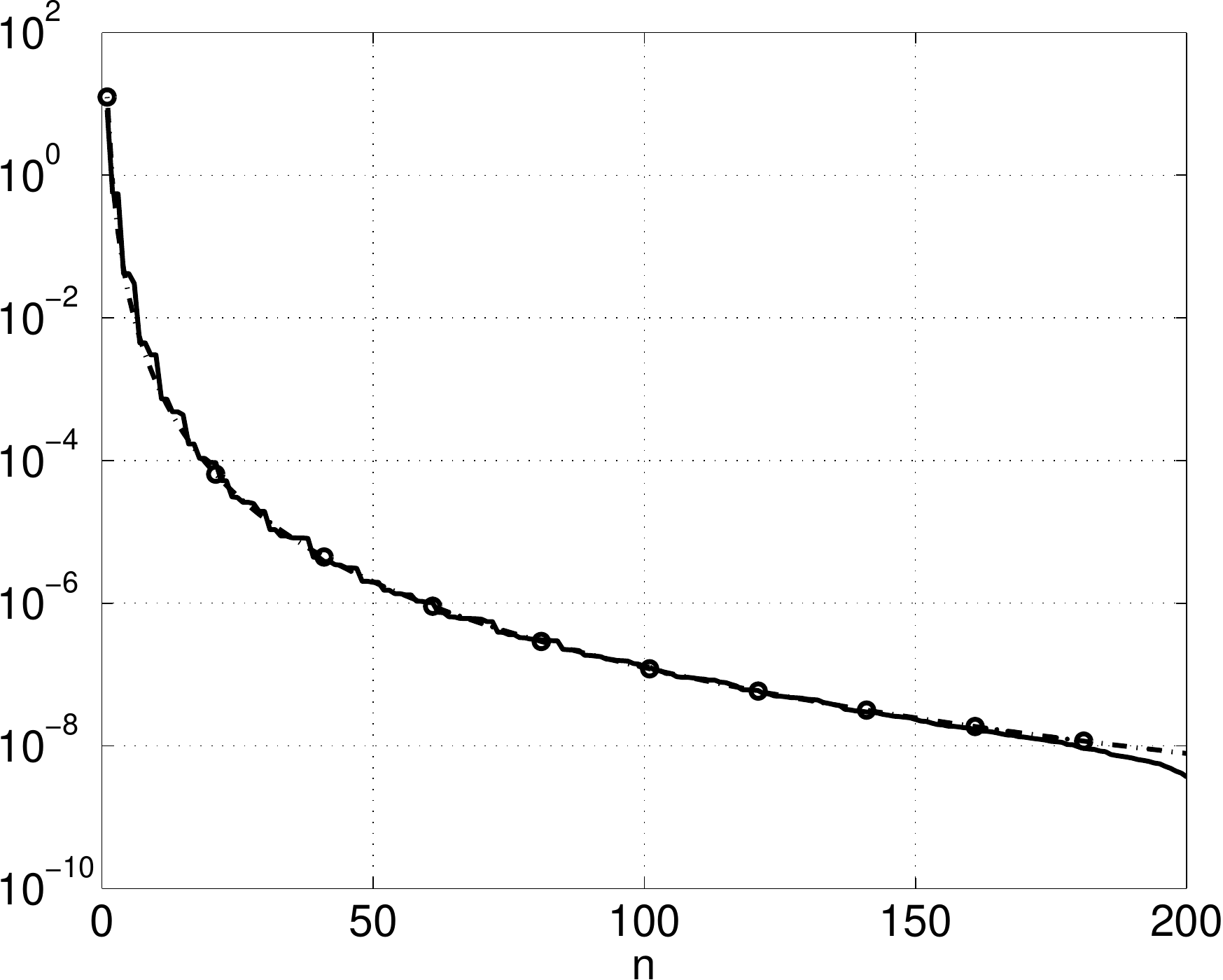}&
\includegraphics[width=0.5 \textwidth]{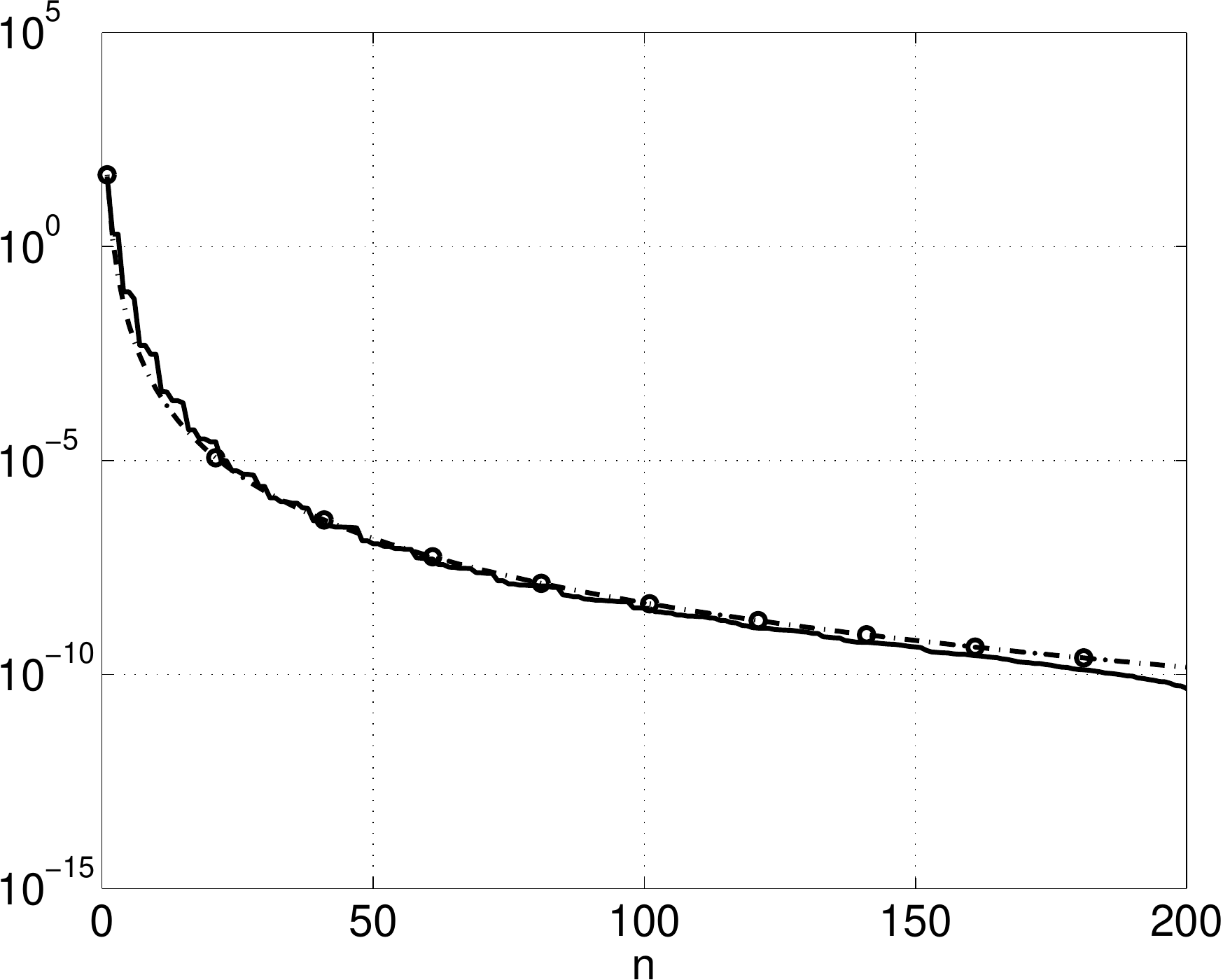}
\end{tabular}
\caption{Decay of the discrete eigenvalues of the Matern kernels (solid line) compared with the theoretical decay rate $n^{-(\beta+d)/d}$ in the corresponding Sobolev spaces (circles). The theoretical bounds are scaled with a positive coefficient. From top left to bottom right: $\beta = 0,1,2,3$.}\label{fig:MatOrder}
\end{figure}

\begin{figure}[hbt]
\centering
\begin{tabular}{cc}
\includegraphics[width=0.5 \textwidth]{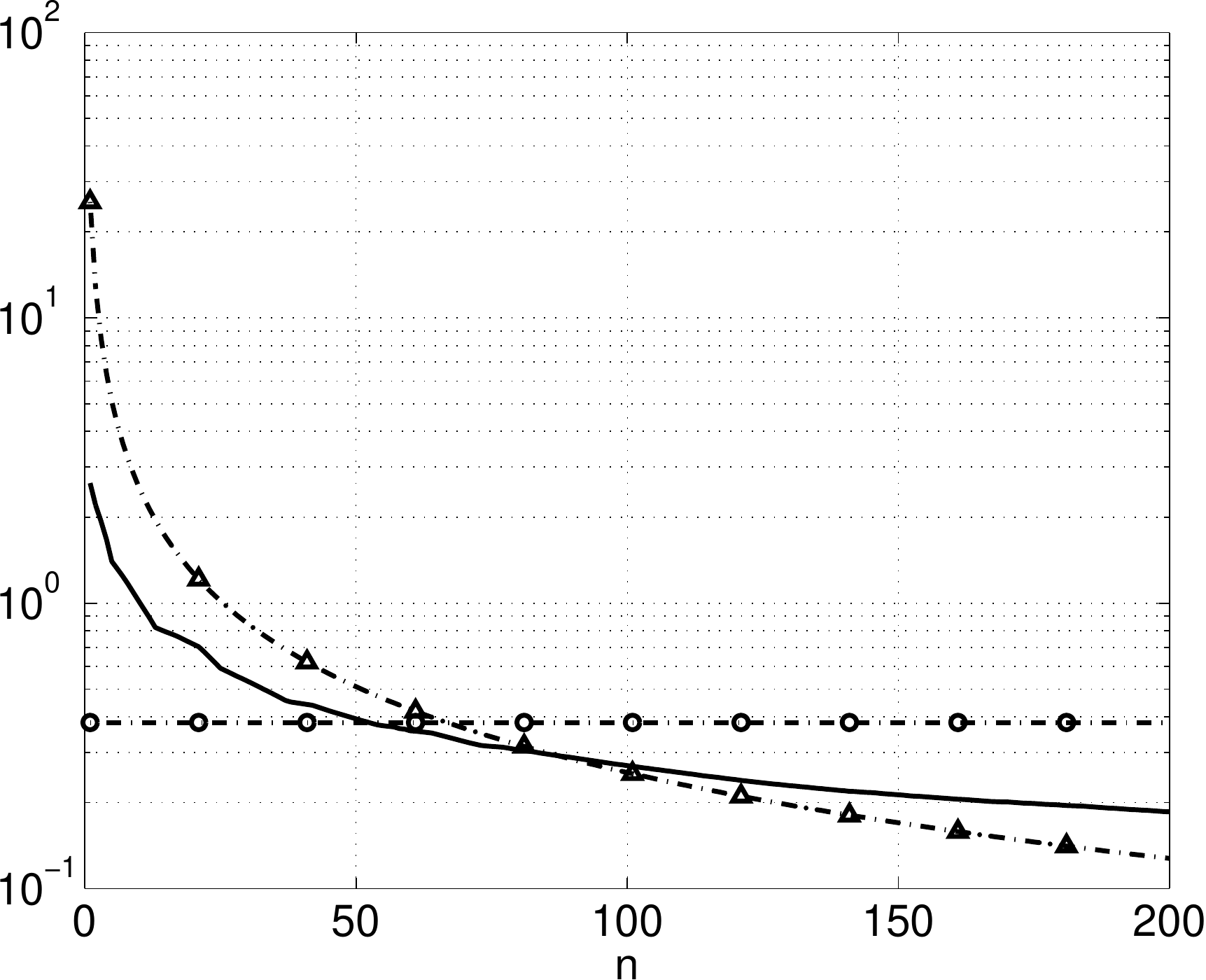}&
\includegraphics[width=0.5 \textwidth]{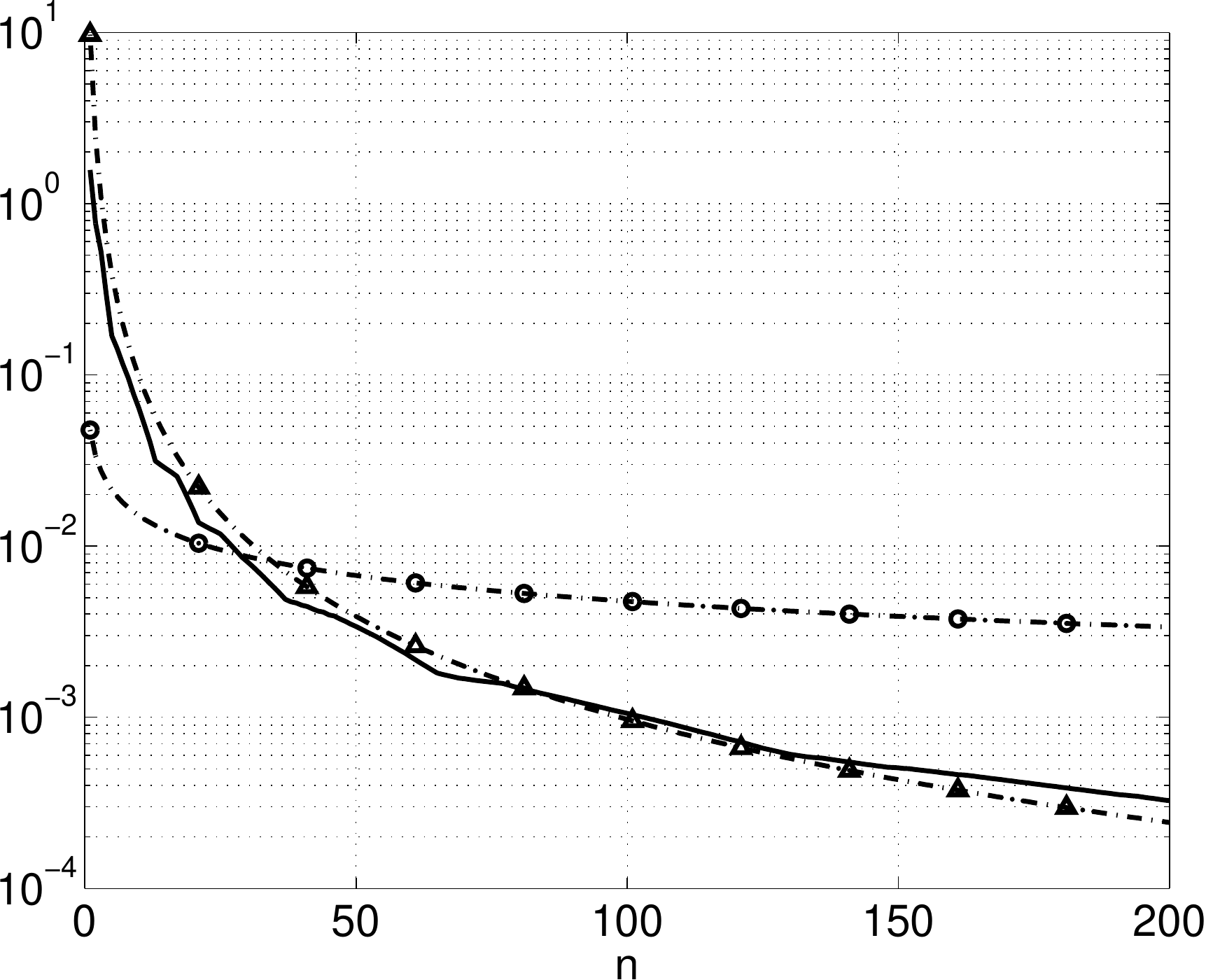}\\

\includegraphics[width=0.5 \textwidth]{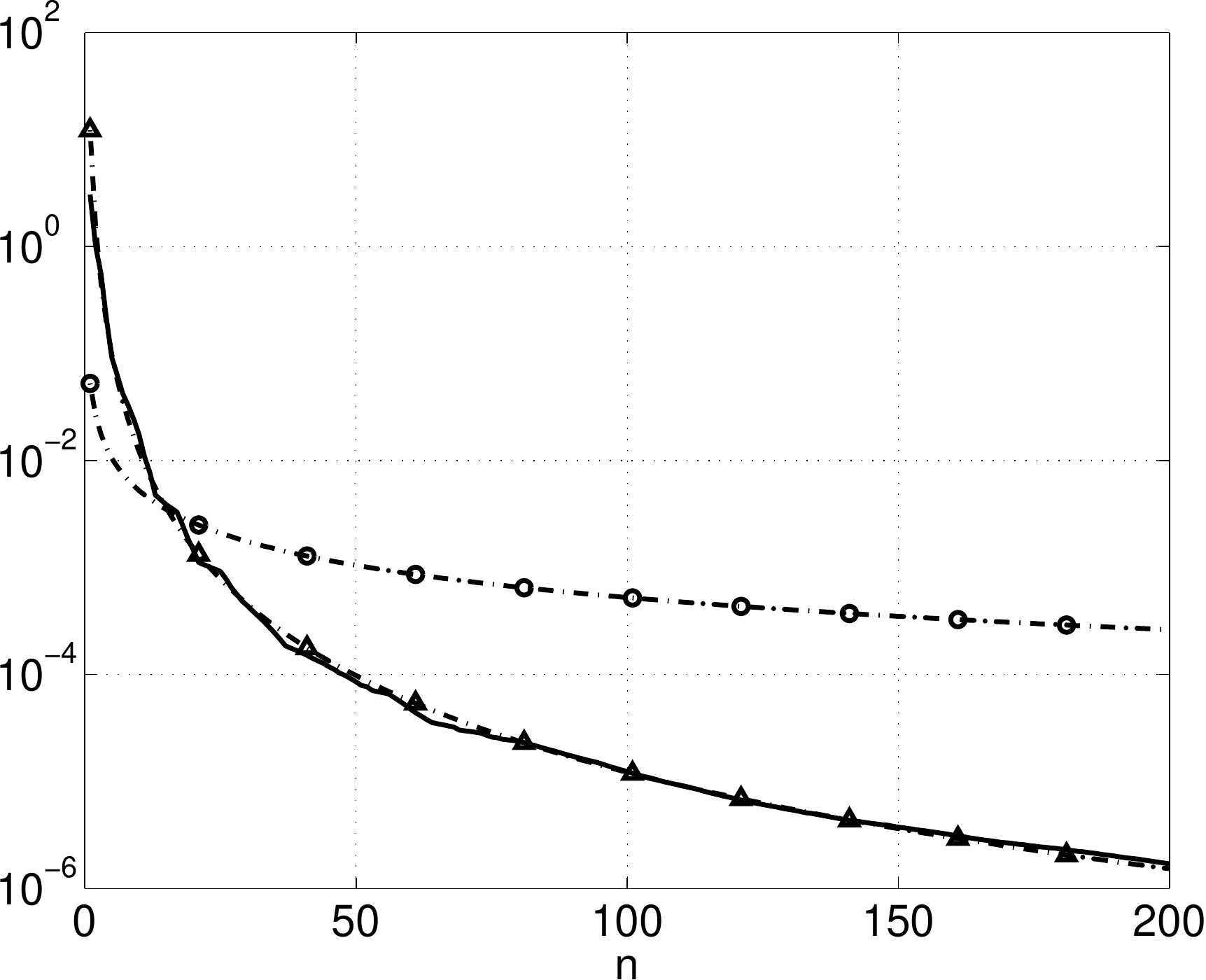}&
\includegraphics[width=0.5 \textwidth]{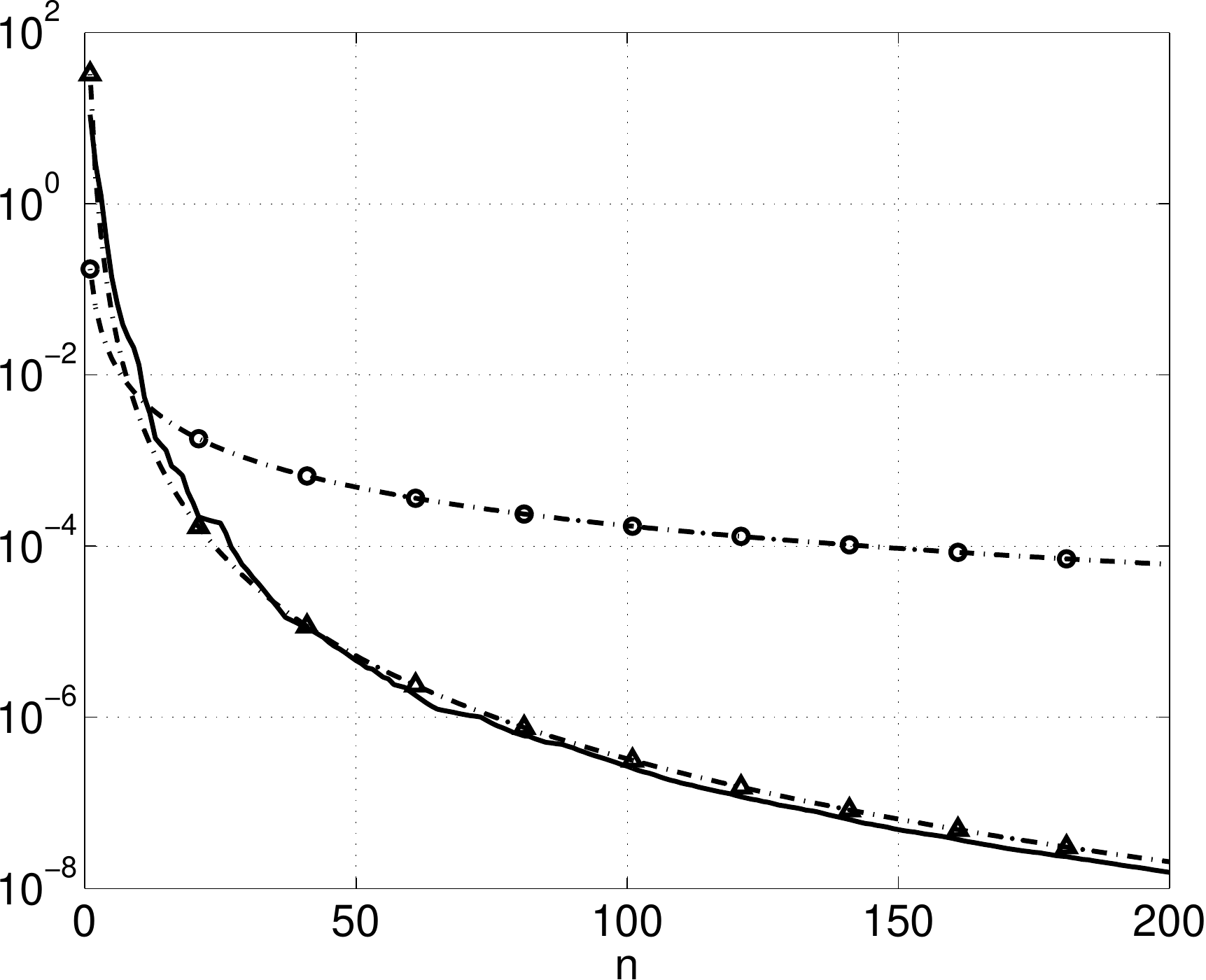}
\end{tabular}
\caption{Difference between the sum of the real eigenalues and the discrete ones (solid line), for the Matern kernels, compared with the theoretical decay rate $n^{-\beta/d}$ in the corresponding Sobolev spaces (circles) and with $n^{-(\beta + d/2)/d}$ (triangles). The theoretical bounds are scaled with a positive coefficient. From top left to bottom right: $\beta = 0,1,2,3$.}\label{fig:MatDiff}
\end{figure}

\subsection{Brownian bridge kernels}\label{sec:BBker}
In this section we  
experiment with the \textit{iterated Brownian bridge kernels} 
(see \cite{CaFaMcC2014}). This family of kernels is useful for 
our purposes because the exact eigenbasis is explicitly known 
and the smoothness of the kernel can be controlled using a parameter.
 
The kernels are defined, for $\beta\in\N\setminus\{0\}$, 
$\varepsilon\geqslant 0$ and $x,y\in[0,1]$, as
\begin{equation}\label{def:BB}
K_{\beta,\varepsilon}(x,y) = \sum_{j=1}^{\infty} \lambda_j(\varepsilon,\beta) \varphi_j(x)\varphi_j(y),
\end{equation}
where
\begin{equation*}
\lambda_j(\varepsilon,\beta) = \left(j^2\pi^2+\varepsilon^2\right)^{-\beta},\quad \varphi_j(x) = \sin(j\pi x).
\end{equation*}
The kernel has $2\beta - 2$ smooth derivatives. 
 
For $\beta = 1$ and $\varepsilon = 0$, the kernel has the form $K_{1,0}(x,y) = \min(x,y) - xy$, but a general closed form is not known. In the following tests we will compute it using a truncation of the series \eqref{def:BB} at a sufficiently large index. For simplicity, we will consider for now only $\varepsilon = 0,1$.
 
Thanks to the knowledge of the explicit expansion, we can also compute the $L_2(\Omega)$ - Gramian of $K_{\beta,\varepsilon}$ by squaring the eigenvalues in \eqref{def:BB}, i.e., 
\begin{equation}\label{def:BB2}
K_{\beta,\varepsilon}^{(2)}(x,y) = (K_{\beta,\varepsilon}(x,\cdot)K_{\beta,\varepsilon}(\cdot,y))_{L_2} = \sum_{j=1}^{\infty} \lambda_j(\varepsilon,2 \beta) \varphi_j(x)\varphi_j(y),
\end{equation}

First, we want to compare the optimal decay of the power function with the one
obtained by starting from a set of points $X_N$, both in the direct and in the
greedy way. We take $N = 500$ randomly distributed points in $(0,1)$ and we
construct an approximation of the eigenspace $E_{n,V_m}$ for $n = 50$. 
 
To speed up the algorithm, for the greedy selection we approximate the 
$L_2(\Omega)$ inner product with the discrete one on $X_N$. This step, of 
course, introduces an error in the selection of the points. Nevertheless, 
in order to evaluate properly the performance of the algorithm, after the 
construction the Newton basis we compute the $L_2(\Omega)$ Gramian exactly, i.e., using \eqref{def:BB2}. 
 
The results for $\beta = 1,\dots,4$ and $\varepsilon = 0,1$ are shown in 
Figure \ref{fig:power}. Lines not present in the plots mean that the 
corresponding power function is negative, because of numerical instability.
 
First, notice that the direct method is sufficiently stable only for 
$\beta = 1$, but in this case it is able to nearly reproduce the optimal 
rate of decay. The greedy algorithms, on the other hand, are feasible for 
all the choices of the parameters, and they have a convergence between 
$\|P_{E_{n,m}}\|_{L_2}$ and $\|P_{E_{2 n,m}}\|_{L_2}$. As the kernel becomes
smoother 
their performance is better, but they become unstable. Observe also that the 
two greedy selections of the points behave essentially in the same way, even 
if the first one is not designed to minimize any $L_2(\Omega)$ norm. 
 
Unlike for the smoothness parameter $\beta$, 
the dependence  
of the performance of the algorithm 
on $\varepsilon$ is not clear, 
except in the case 
$\beta = 4$, where $\epsilon = 1$ gives a better stability.

\begin{figure}[hbt]
\centering
\begin{tabular}{cc}
\includegraphics[width=0.5 \textwidth]{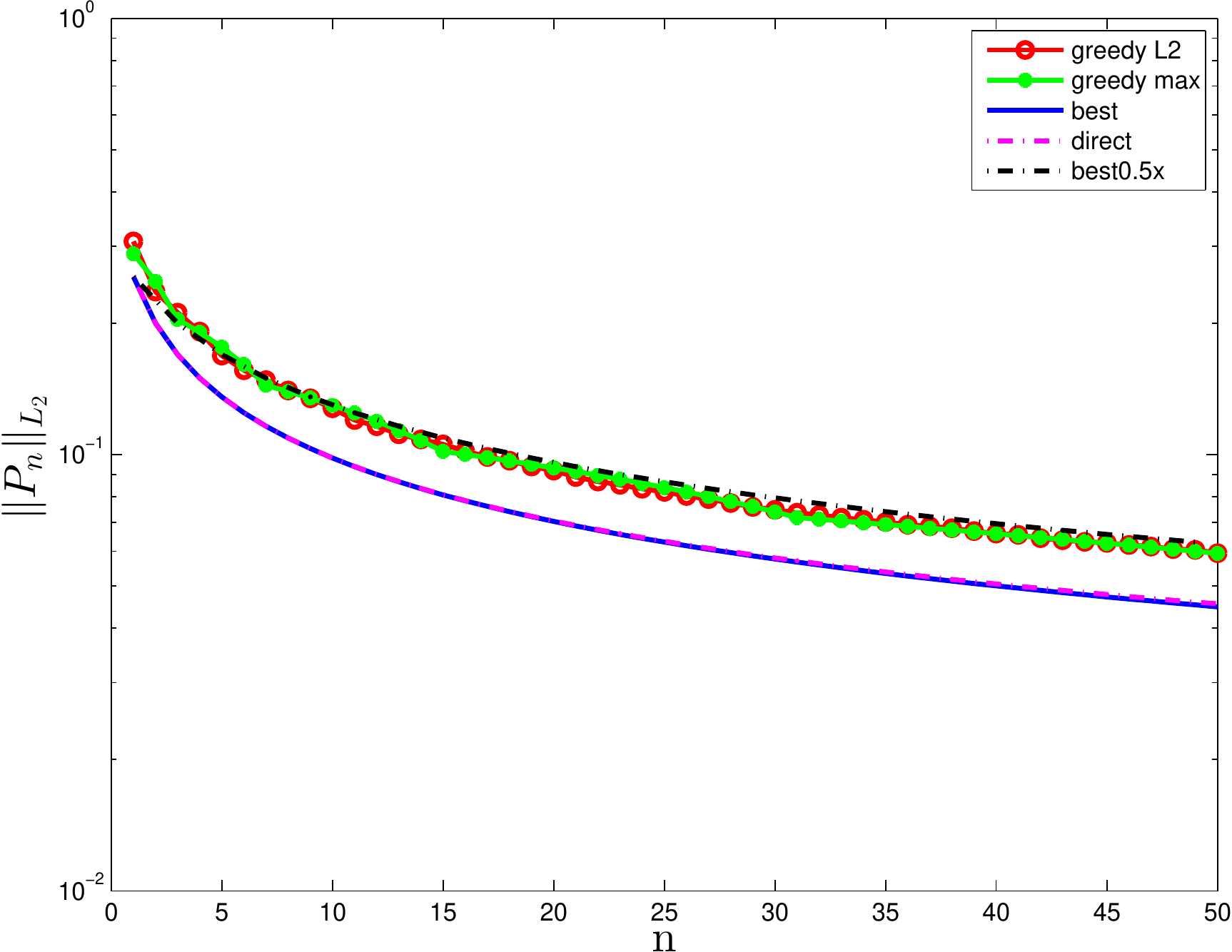}&
\includegraphics[width=0.5 \textwidth]{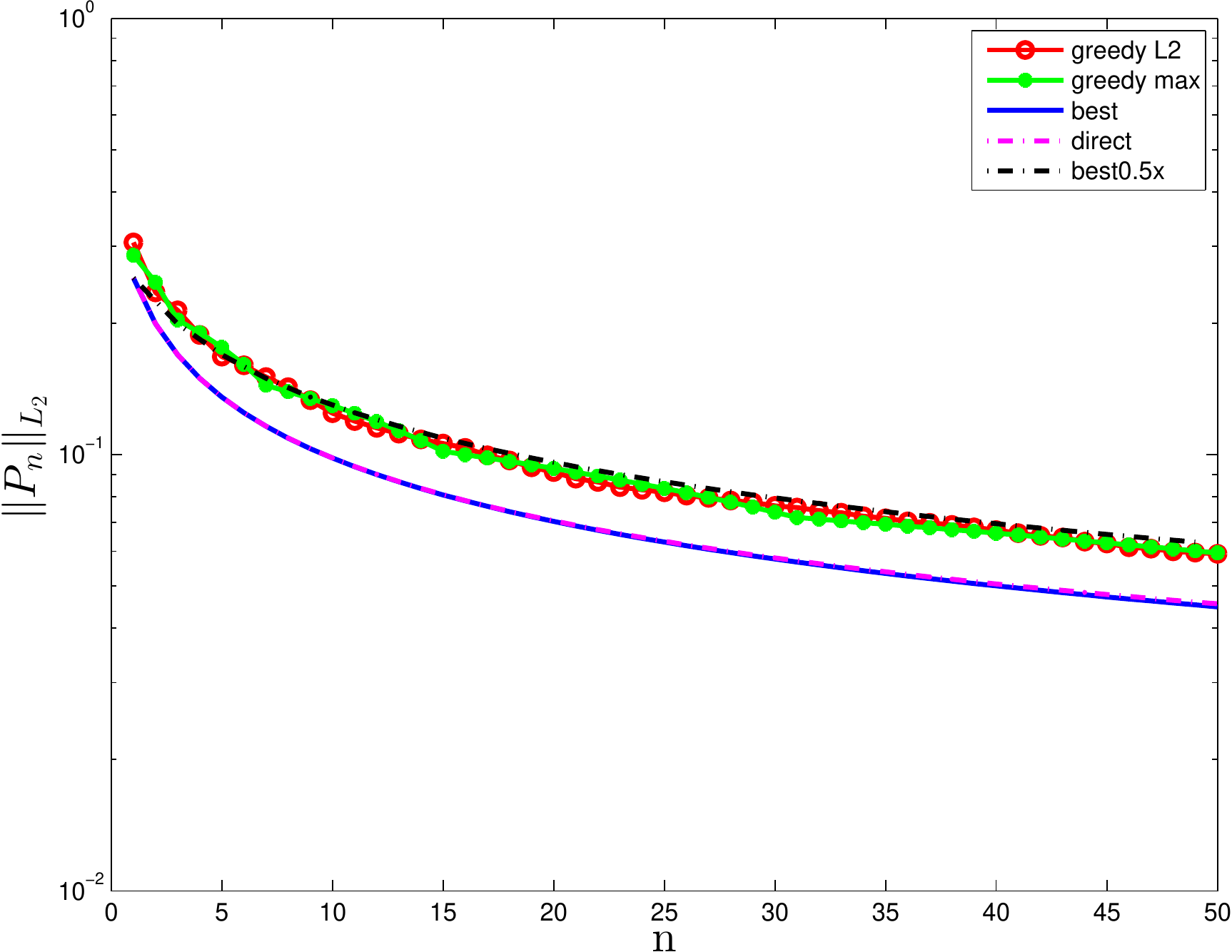}\\

\includegraphics[width=0.5 \textwidth]{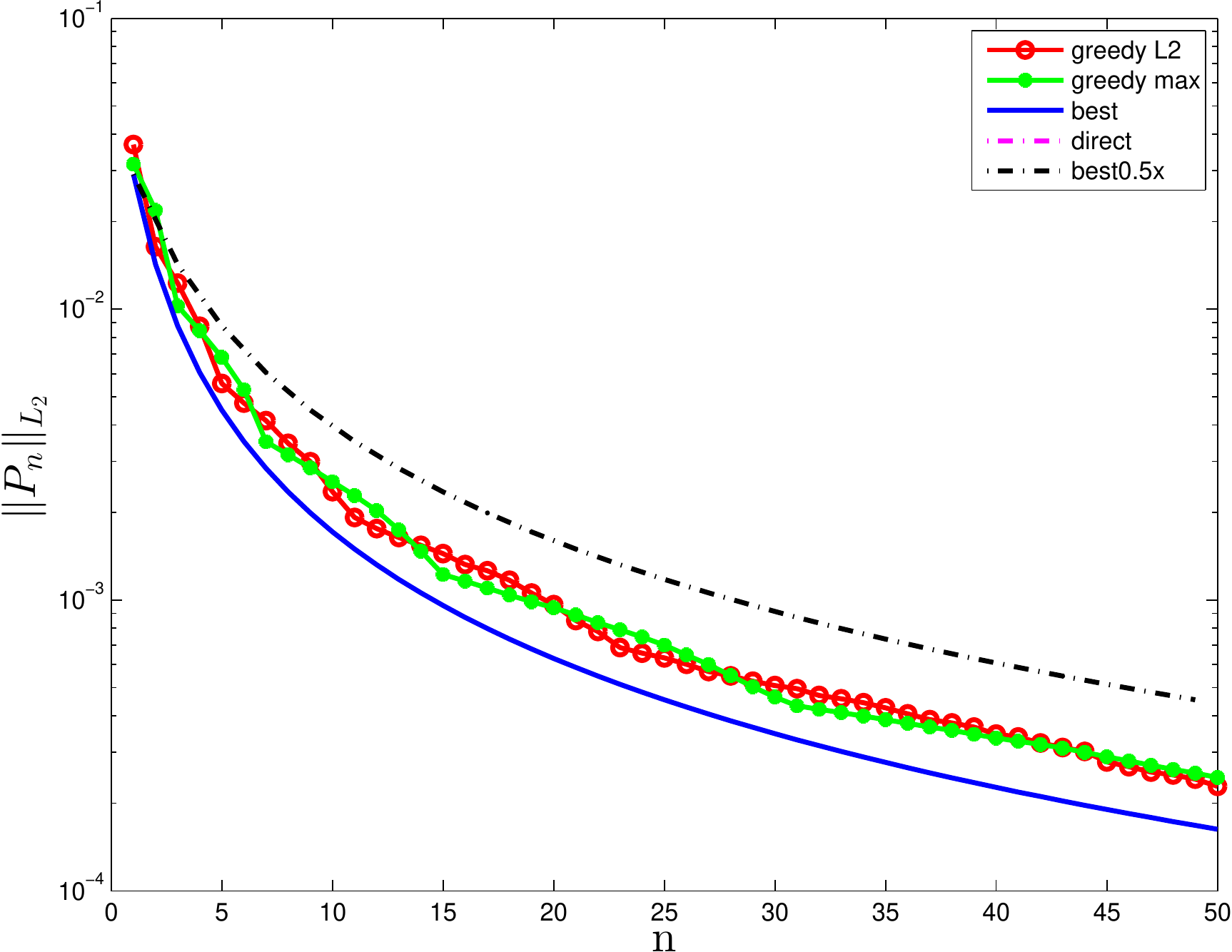}&
\includegraphics[width=0.5 \textwidth]{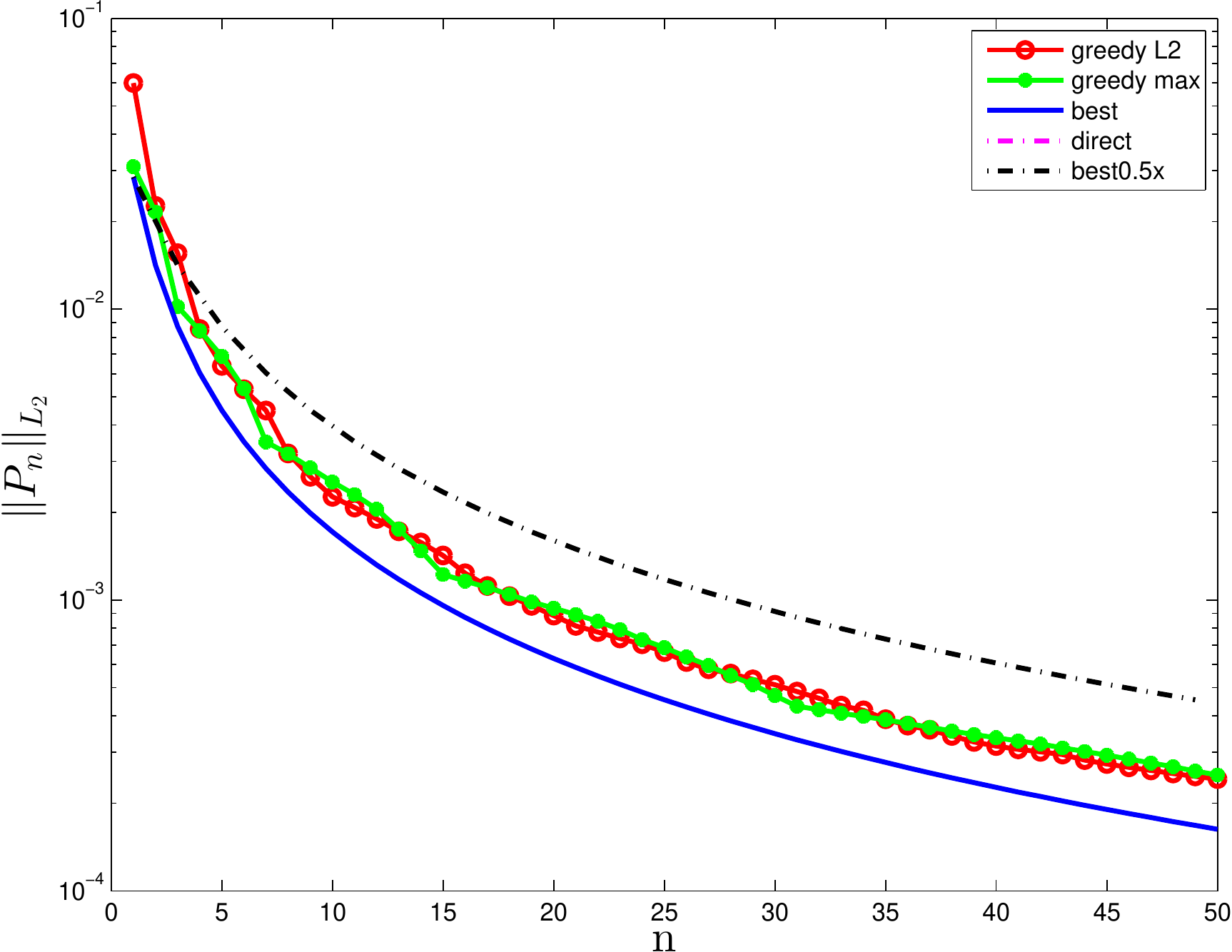}\\

\includegraphics[width=0.5 \textwidth]{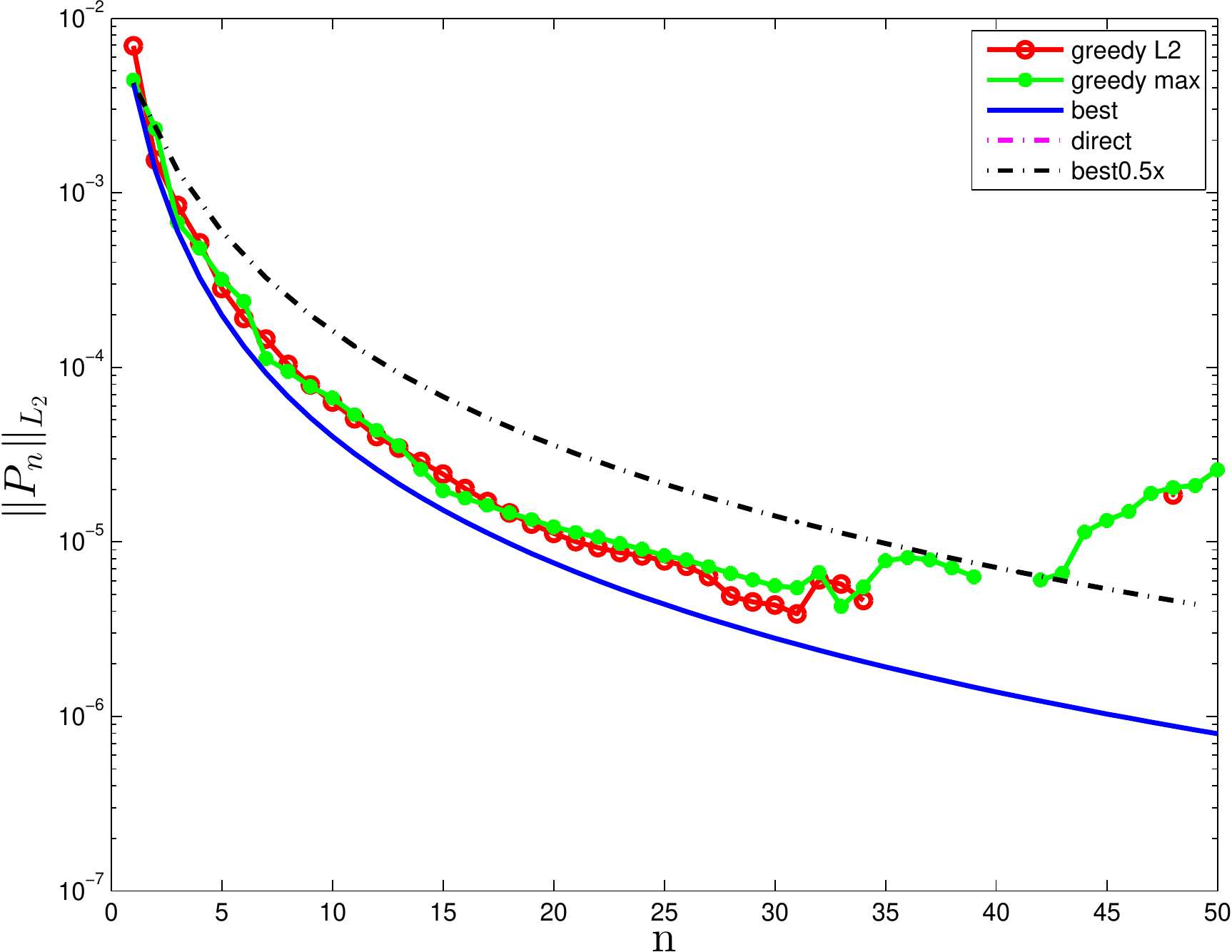}&
\includegraphics[width=0.5 \textwidth]{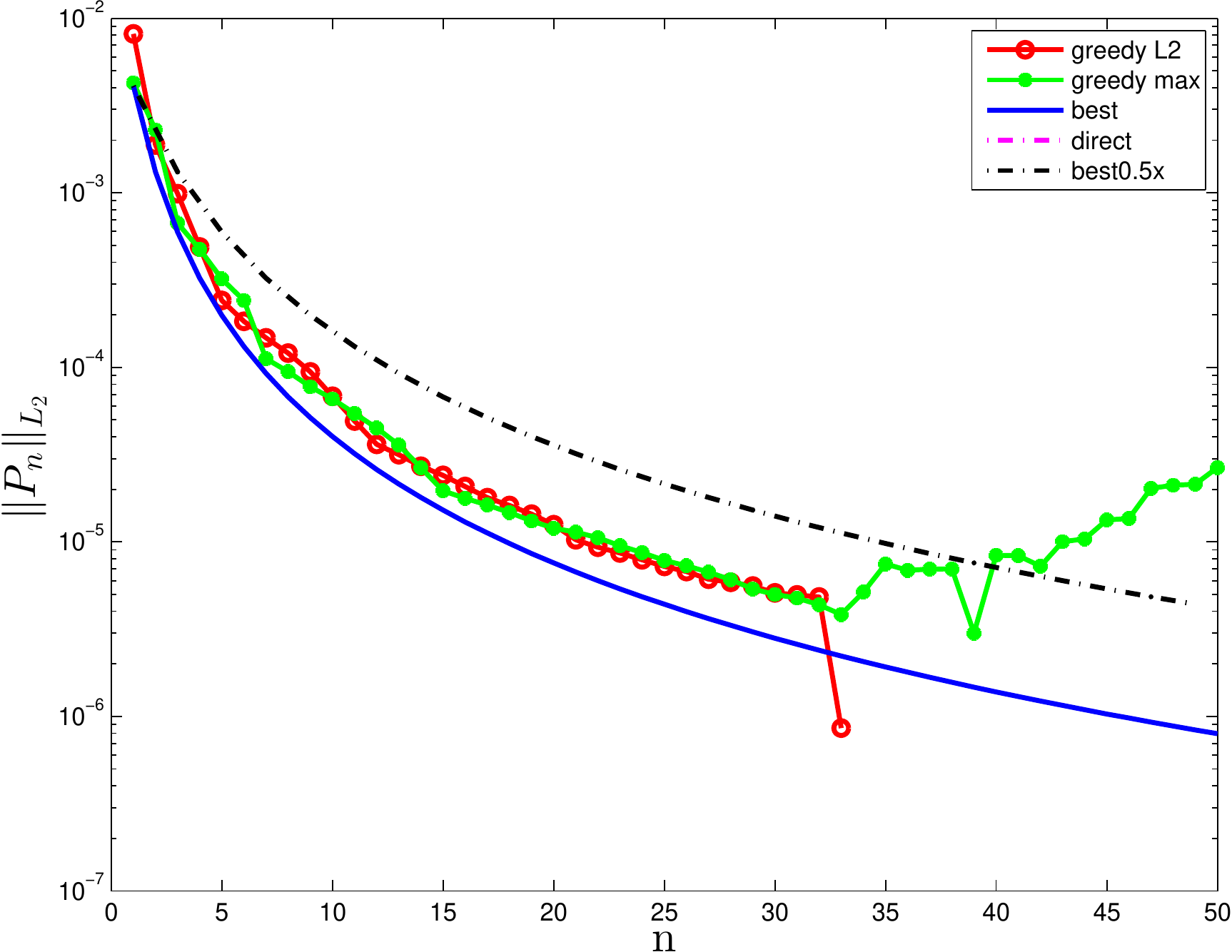}\\

\includegraphics[width=0.5 \textwidth]{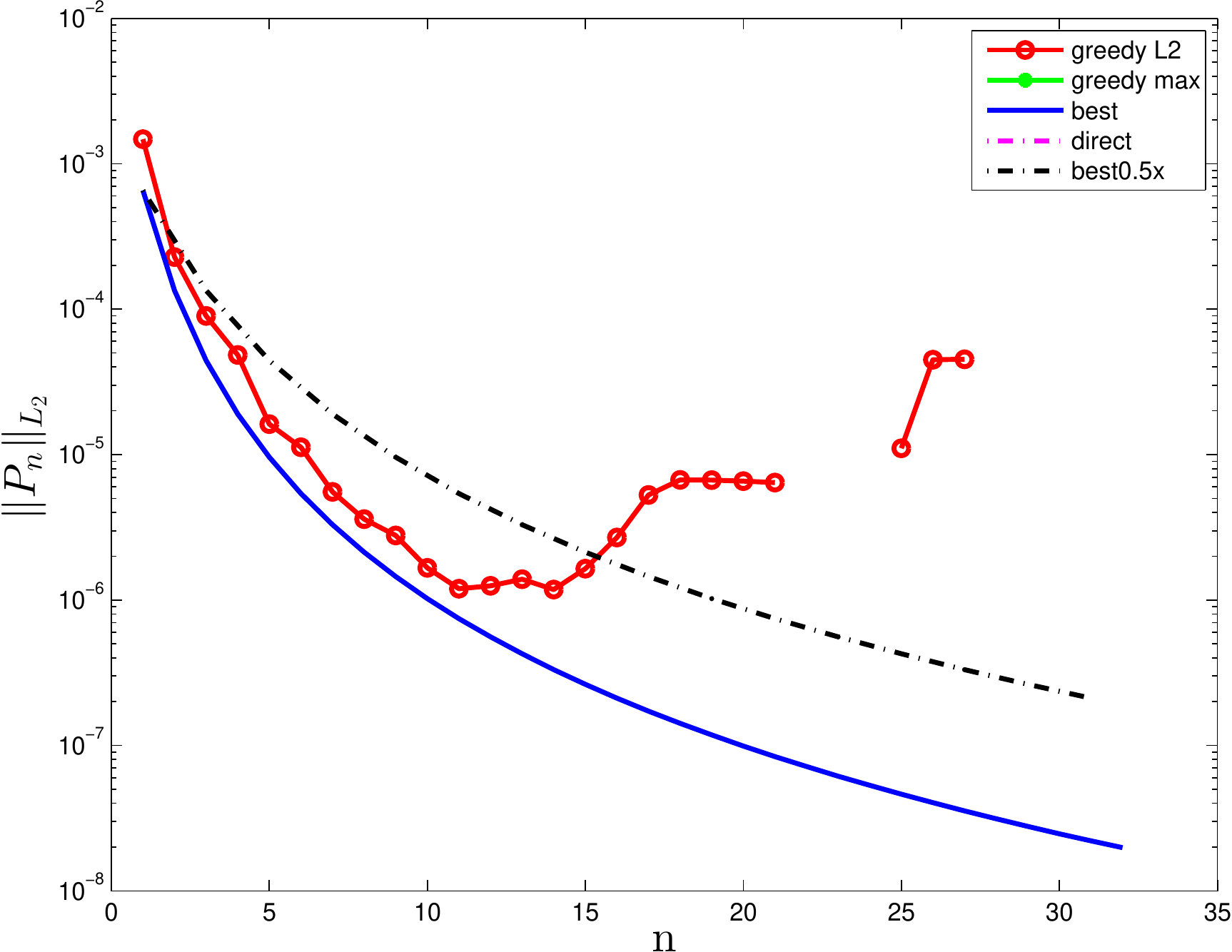}&
\includegraphics[width=0.5 \textwidth]{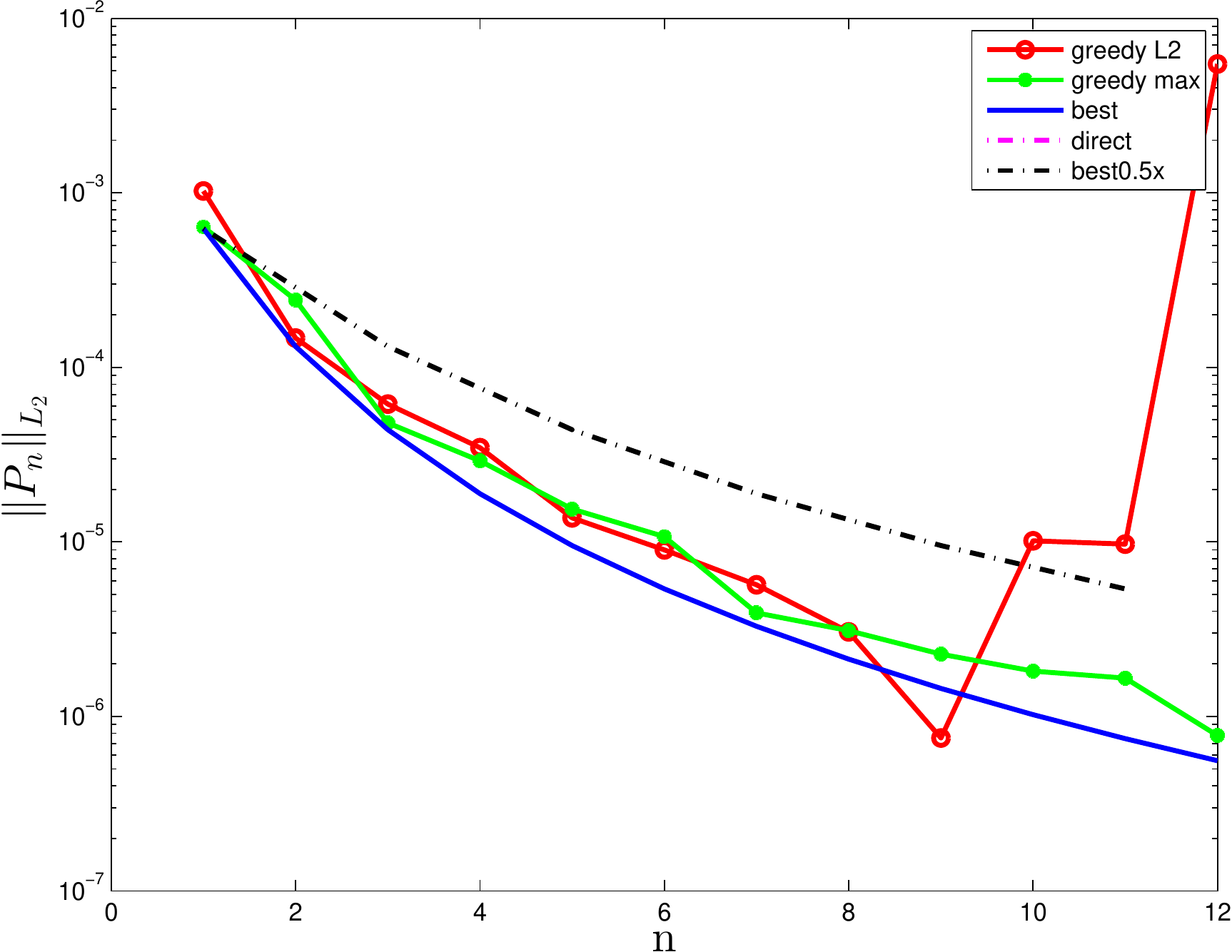}
\end{tabular}
\caption{Decay of the power functions described in Section \ref{sec:BBker} for different parameters (from left to right: $\varepsilon = 0, 1$; from top to bottom: $\beta = 1,2,3,4$). }\label{fig:power}
\end{figure}

Finally,  
we test the convergence of the approximate eigencouples to the exact ones.
 
Since the method becomes unstable, we use here a smaller set of 
$N = 100$ randomly distributed points in $(0,1)$ and we check 
the approximation for the first $n = 50$ eigenelements. Figure 
\ref{fig:approxEig} displays the results for $\beta = 1,2,3,4$ and 
$\varepsilon = 0$. Since the eigenbasis is defined up to a change of 
sign, we can expect to approximate $|\sqrt{\lambda_{j}}\varphi_j|$, 
$1\leqslant j\leqslant n$. 
 
As expected from Proposition \ref{prop:conv}, both for the eigenvalues 
and the eigenfunctions the convergence is faster for smoother kernels  and for
the smaller 
indices  
$j$. 
 
Also in this example the approximation becomes unstable for $\beta = 4$.

\begin{figure}[hbt]
\centering
\begin{tabular}{cc}
\includegraphics[width=0.5 \textwidth]{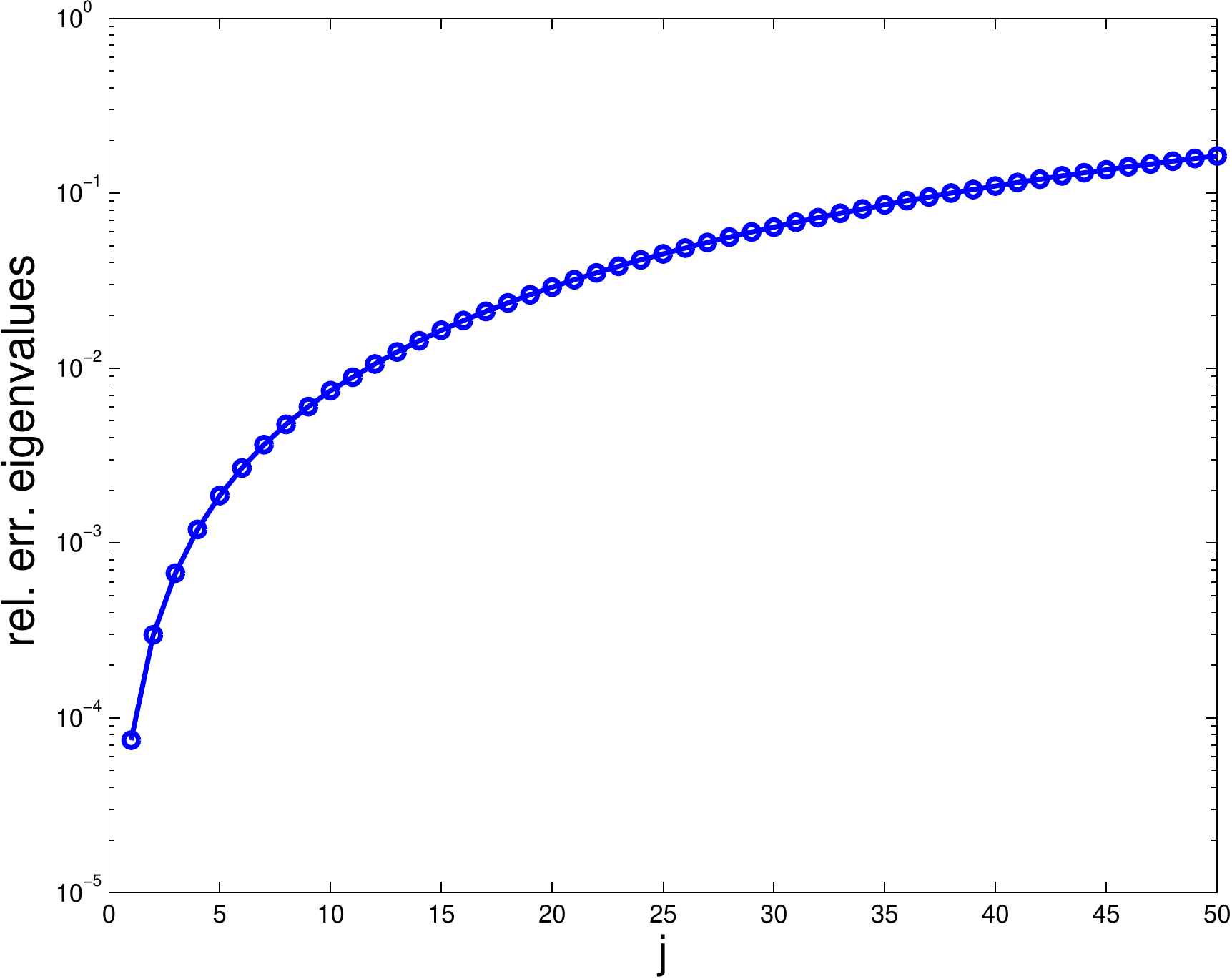}&
\includegraphics[width=0.5 \textwidth]{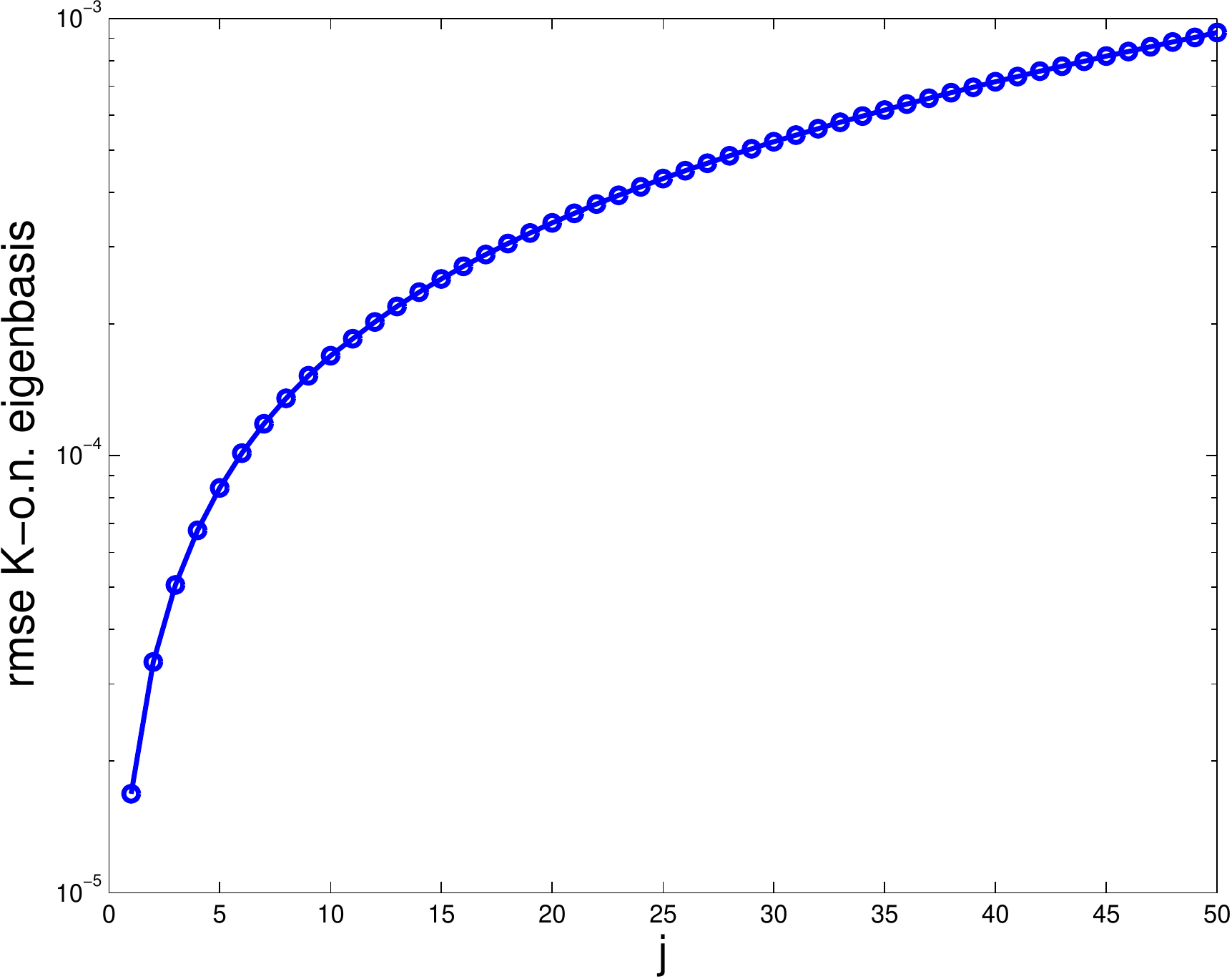}\\

\includegraphics[width=0.5 \textwidth]{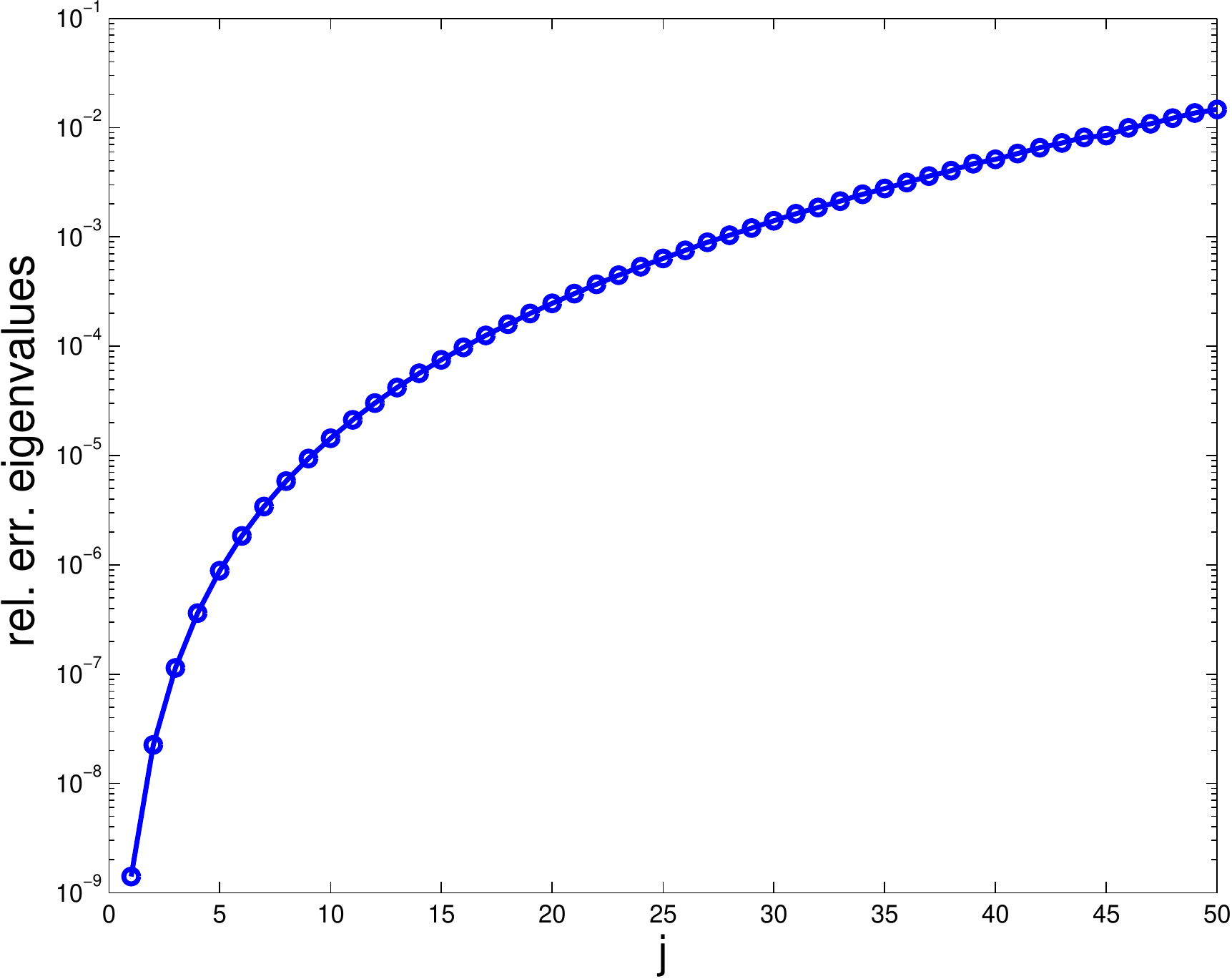}&
\includegraphics[width=0.5 \textwidth]{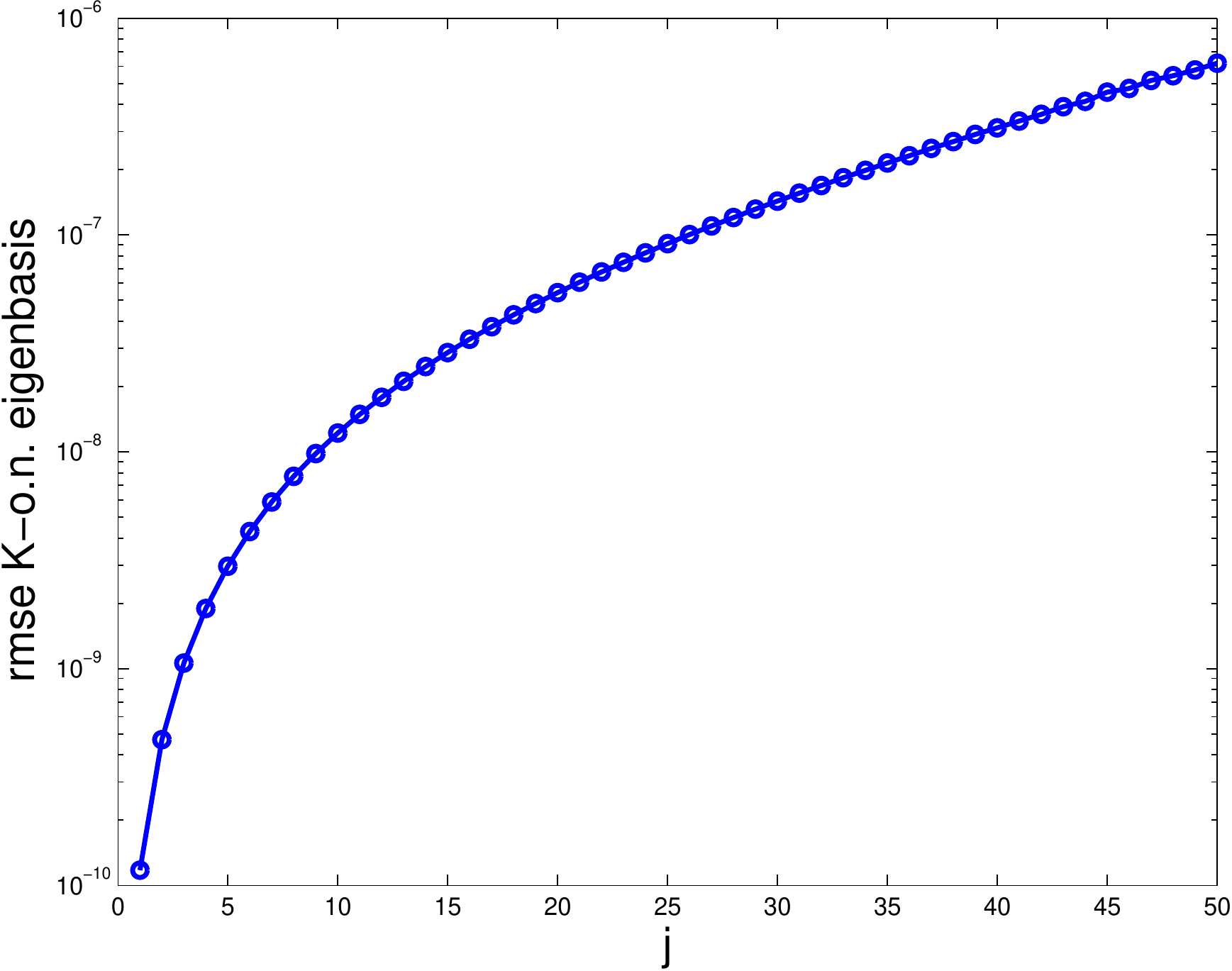}\\

\includegraphics[width=0.5 \textwidth]{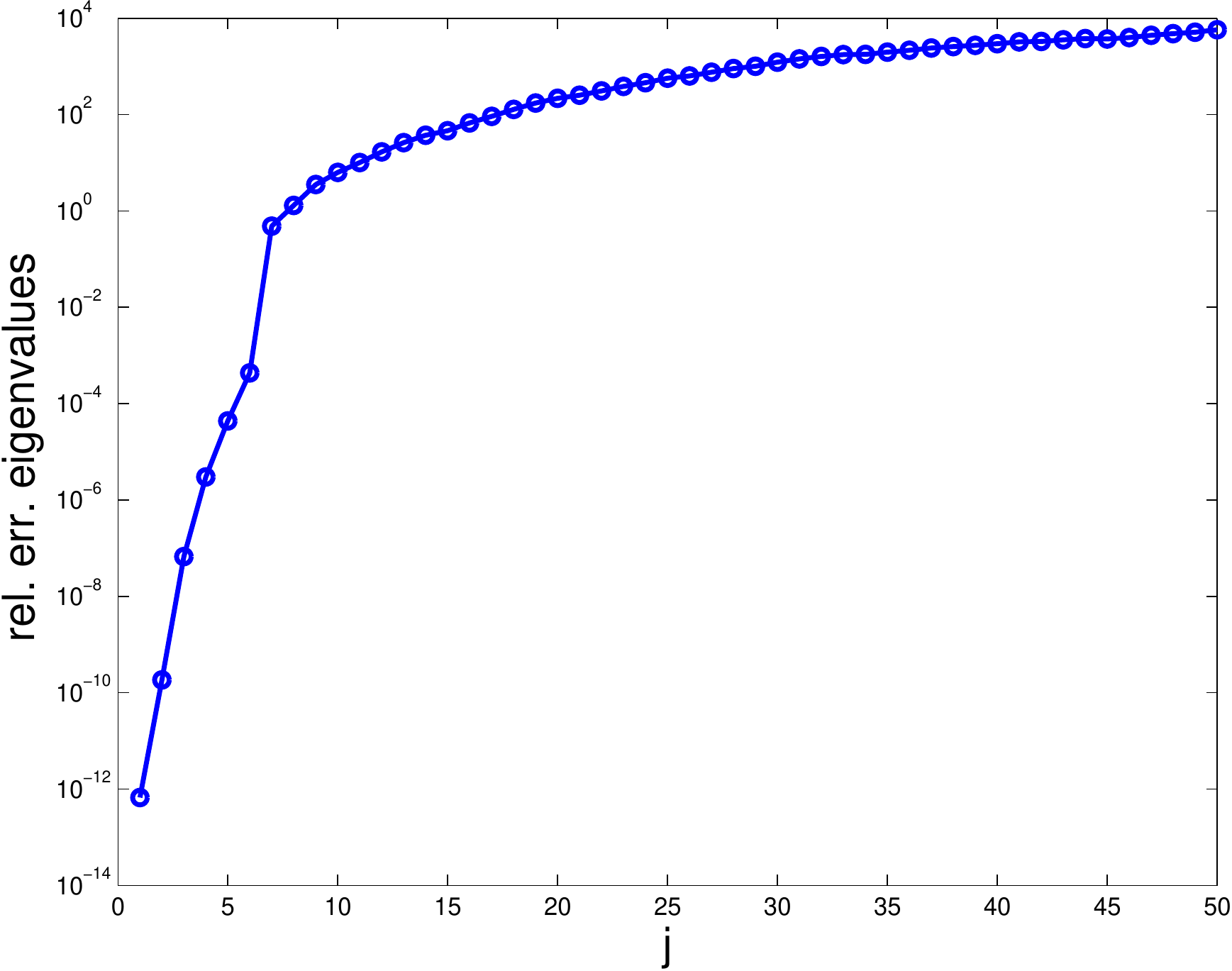}&
\includegraphics[width=0.5 \textwidth]{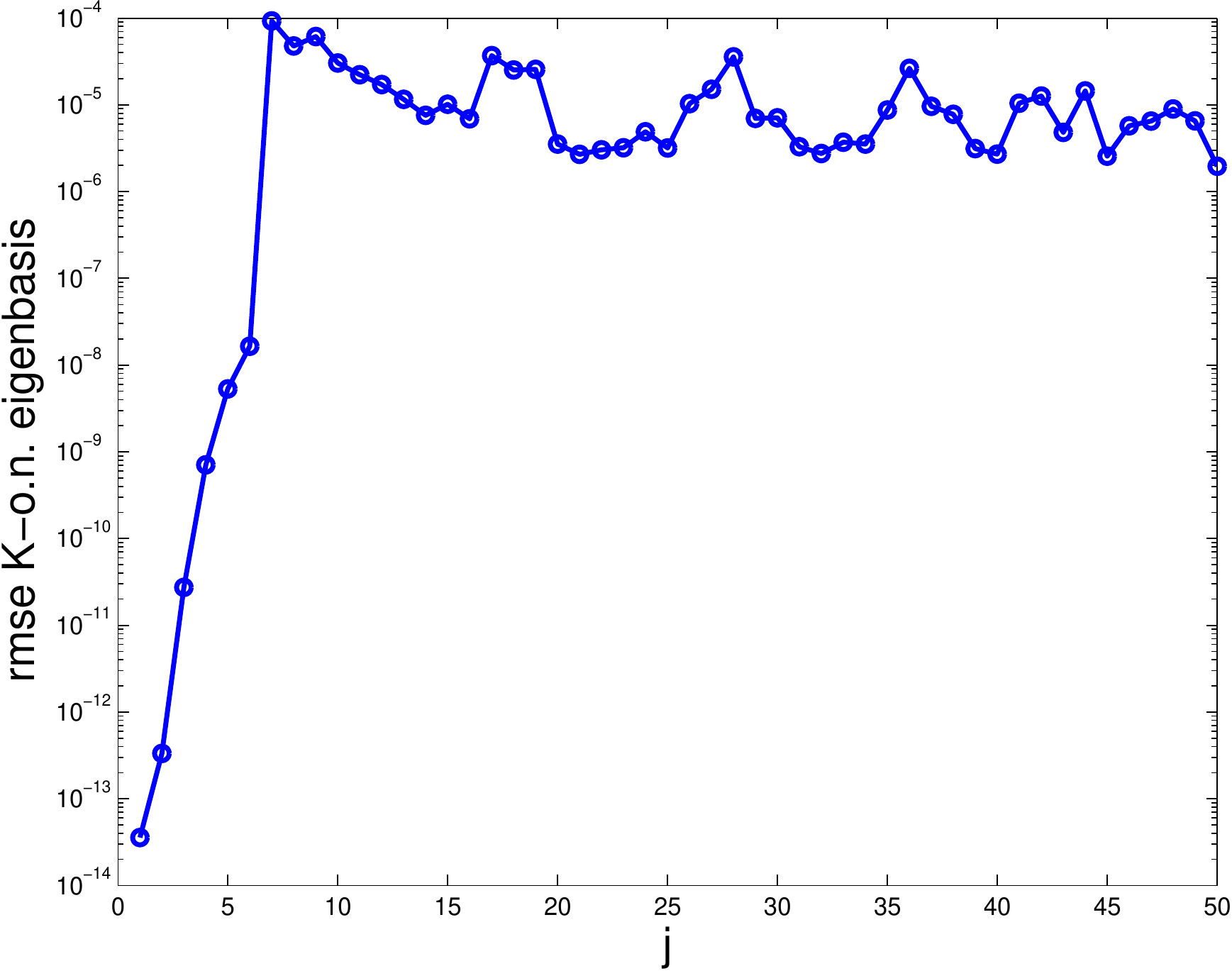}\\

\includegraphics[width=0.5 \textwidth]{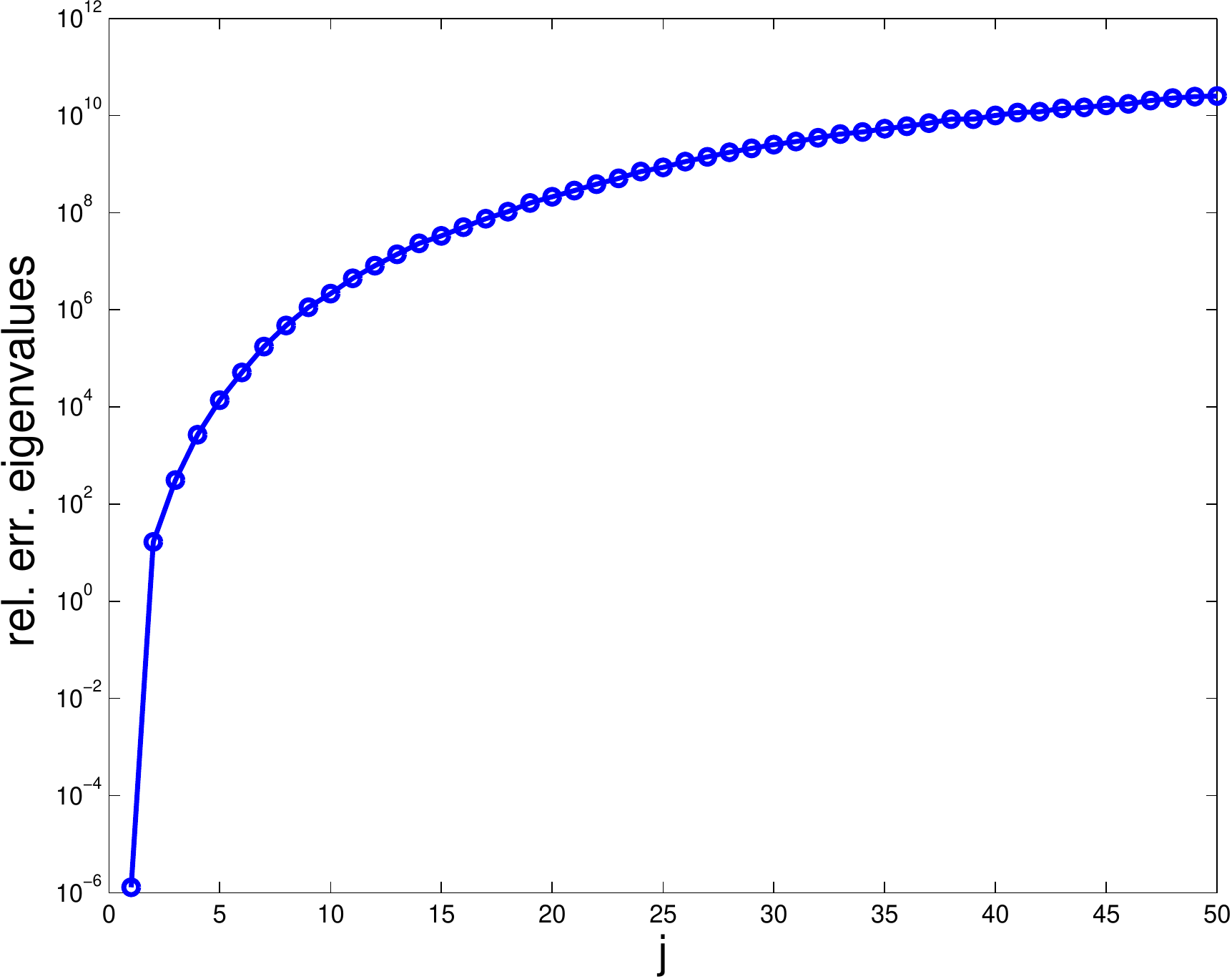}&
\includegraphics[width=0.5 \textwidth]{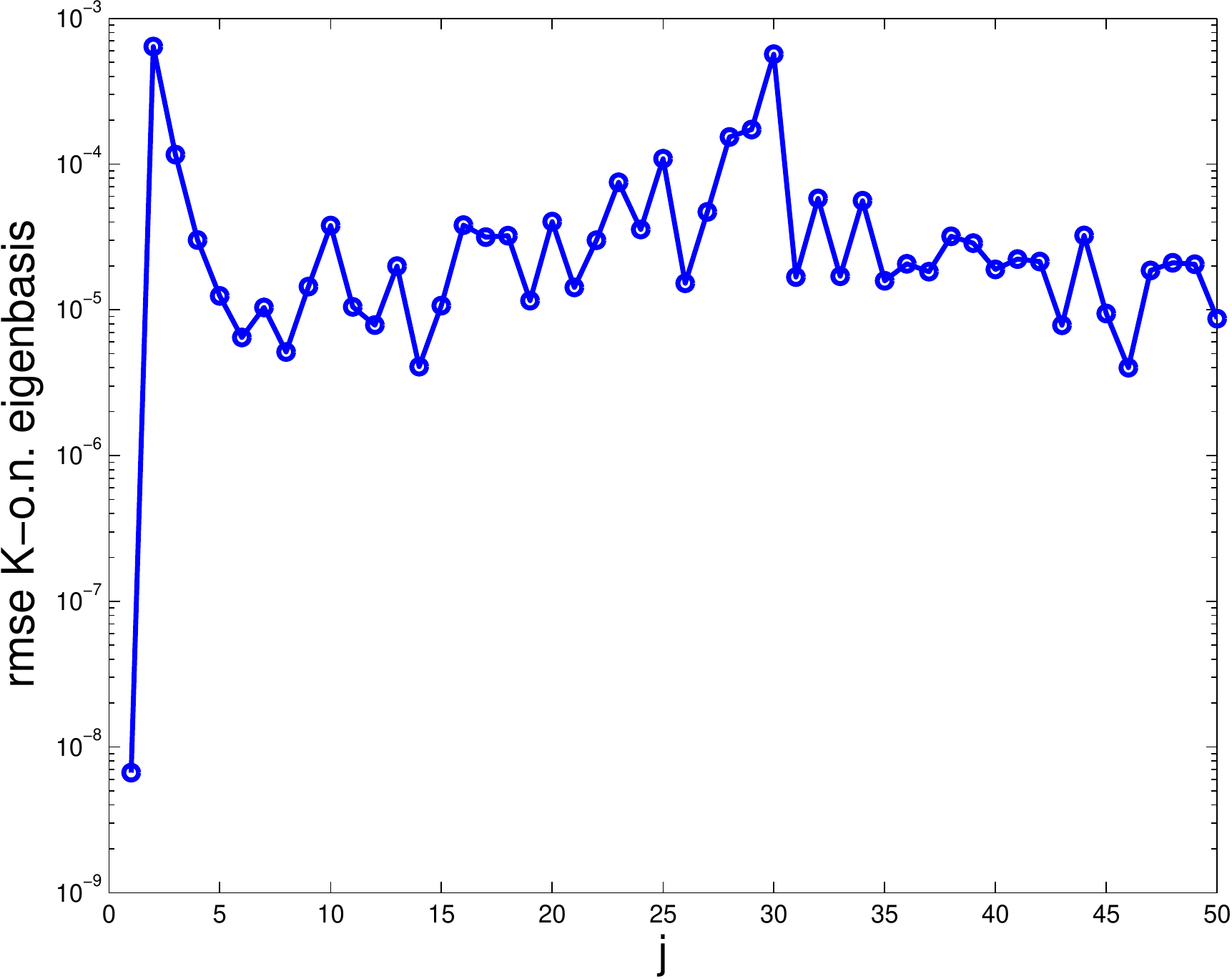}
\end{tabular}
\caption{Approximation of the eigenvalues (left) and the eigenbasis (right), 
for $\beta = 1,2,3$ (from top to bottom), $\varepsilon = 0$ 
and for $1\leqslant j\leqslant n$, $n = 50$, as discussed in Section \ref{sec:BBker}}\label{fig:approxEig}
\end{figure}

\bibliographystyle{abbrv}
\def\cprime{$'$}

\end{document}